\newcommand{\real}{\mathbb{R}}
\newcommand{\nat}{\mathbb{N}}
\newcommand{\nulld}{\hat{d}}
\newcommand{\nullL}{\hat{L}}
\newcommand{\nullR}{\hat{R}^+}
\newcommand{\J}{J^+}
\newcommand{\K}{K^+}
\newcommand{\g}{\bar{g}}
\theoremstyle{plain}
\newtheorem{thm}{Theorem}[section]
\newtheorem{cor}[thm]{Corollary}
\newtheorem{lem}[thm]{Lemma}
\newtheorem{prop}[thm]{Proposition}
\theoremstyle{definition}
\newtheorem{defn}[thm]{Definition}
\newtheorem{ex}[thm]{Example}
\newtheorem{rem}[thm]{Remark}
\title[Global hyperbolicity and the null distance]{Global hyperbolicity\\ through the eyes of the null distance}
\author[A.~Burtscher]{Annegret Burtscher$^\dagger$}
\author[L.~Garc\'ia-Heveling]{Leonardo Garc\'ia-Heveling$^{\dagger,*}$}
\thanks{$^\dagger$Department of Mathematics, IMAPP, Radboud University, PO Box 9010, 
6500 GL Nijmegen, The Netherlands, \textit{Email}: \texttt{burtscher@math.ru.nl}.\\
\indent $^*$Current address: Fachbereich Mathematik, Universit\"at Hamburg, Bundesstra\ss e 55, 20146 Hamburg, Germany. \textit{Email}: {\tt leonardo.garcia@uni-hamburg.de}.\\
{\it Acknowledgements:} \textnormal{This manuscript was completed during an extended research stay at the Fields Institute for Research in Mathematical Sciences in Toronto, in connection with the thematic program ``Nonsmooth Riemannian and Lorentzian Geometry" that took place in the Fall of 2022. Both authors gratefully acknowledge funding by the Fields Institute during their stay. AB's research is in part also supported by the Dutch Research Council (NWO), Project number VI.Veni.192.208.} We thank all referees for their valuable comments.}
\subjclass[2020]{53C50 (primary), 
53C23, 
53B30, 
83C05
}
\keywords{Lorentzian geometry, metric geometry, general relativity, causality theory, completeness, spacetimes, time functions}
\begin{document}

\maketitle

\begin{abstract}
 No Hopf--Rinow Theorem is possible in Lorentzian Geometry. None\-theless, we prove that a spacetime is globally hyperbolic if and only if it is metrically complete with respect to the null distance of a time function. Our approach is based on the observation that null distances behave particularly well for weak temporal functions in terms of regularity and causality. Specifically, we also show that the null distances of Cauchy temporal functions and regular cosmological time functions encode causality globally.
\end{abstract}

\section{Introduction}

The notion of global hyperbolicity was introduced by Leray~\cite{Ler} in 1952 to prove the uniqueness of solutions for hyperbolic partial differential equations. Shortly thereafter, global hyperbolicity entered the field of General Relativity through the proof of the global well-posedness of the Einstein equations of Choquet--Bruhat and Geroch~\cites{CB,CBGe} and the Singularity Theorems of Penrose and Hawking~\cites{HaEl,Pen}. Via the topological splitting result of Geroch~\cite{Ger} globally hyperbolic spacetimes manifestly settled in Lorentzian Geometry in 1970.

Spacetimes are time-oriented Lorentzian manifolds $(M,g)$. They are the geometric objects needed for formulating gravitation in General Relativity. Throughout this manuscript, we use the sign convention $(-,+,\ldots,+)$ for $g$ and assume that it is smooth (although $C^2$ is sufficient, and occasionally less). Global hyperbolicity establishes a deep link between the topology of $M$ and the causal structure induced by the metric tensor $g$. The causal structure is induced on $M$ by causal curves, i.e., locally Lipschitz (with respect to any Riemannian metric \cite{Chr}*{Sec.\ 2.3}, \cite{Bur}*{Thm.\ 4.5}) curves $\gamma$ with $g(\dot\gamma,\dot\gamma)\leq 0$, as follows. If $q$ can be reached by a future-directed causal curve from $p$ we say that $q$ is in the causal future of $p$, and write $q \in J^+(p)$ (dually for the causal past $J^-(p)$) or $(p,q) \in \J$.
Leray's original definition of global hyperbolicity was based on the $C^0$-compactness of the set of causal curves between any two points in a spacetime. The modern definition of global hyperbolicity requires compactness of causal diamonds $J^+(p) \cap J^-(q)$ akin to the Heine--Borel property for complete Riemannian manifolds (see \cite{HaEl}*{Sec.\ 6.6} and \cite{BeSa2}).

\begin{defn}\label{def:gh}
 A spacetime $(M,g)$ is called \emph{globally hyperbolic} if it is \emph{causal} (there is no closed causal curve) and all causal diamonds $J^+(p) \cap J^-(q)$, $p,q \in M$, are compact.
\end{defn}

\noindent If $(M,g)$ is a noncompact spacetime of dimension greater than $2$ then the causal condition can be dropped \cite{HoMi}.

A landmark result concerning global hyperbolicity is Geroch's Topological Splitting Theorem~\cite{Ger}, later promoted to a smooth orthogonal splitting by Bernal and S\'anchez~\cite{BeSa}.
It states that any globally hyperbolic spacetime $(M,g)$ admits a \emph{Cauchy orthogonal splitting}, i.e., an isometry
\begin{equation} \label{eq:GHsplitting}
    (M,g) \cong (\real \times \Sigma, -\alpha d\tau^2 + \g_\tau),
\end{equation}
for $\alpha \colon \real \times \Sigma \to (0,\infty)$ a smooth function and $(\g_\tau)_\tau$ identifiable with a family of Riemannian metrics on the Cauchy slices $\{\tau\}\times\Sigma$, smoothly varying in $\tau$. The proof of this splitting result is rooted in the construction of a suitable time function $\tau$.

\begin{defn}
 Let $(M,g)$ be a spacetime. A continuous function $\tau \colon M \to \real$ is said to be a \emph{time function} if
 \[
  q \in J^+(p)\setminus\{p\} \Longrightarrow \tau(p) < \tau(q).
 \]
\end{defn}

Every \emph{stably causal} spacetime admits a (non-unique) time function \cites{Haw,Min1,Min}. The class of globally hyperbolic spacetimes can conveniently be characterized by the special type of time functions they admit.

\begin{thm}[Geroch~\cite{Ger}, Bernal--S\'anchez~\cite{BeSa}]\label{thm:Cauchytime}
 A spacetime $(M,g)$ is globally hyperbolic if and only if there exists a \emph{Cauchy time function} $\tau$ on $M$, meaning that each of its level sets $\tau^{-1}(s)$, $s\in\real$, is a \emph{Cauchy surface}, i.e., intersected (exactly once) by every inextendible causal curve.
\end{thm}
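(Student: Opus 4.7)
The plan is to prove the two implications separately. The ``if'' direction is the shorter one: strict monotonicity of any time function along future-directed causal curves immediately rules out closed causal curves, so $(M,g)$ is causal. For compactness of diamonds $\J(p) \cap J^-(q)$, the hypothesis that each level set $\cahy_s = \tau^{-1}(s)$ is a topological Cauchy hypersurface lets one squeeze any sequence in the diamond between $\cahy_{\tau(p)}$ and $\cahy_{\tau(q)}$, then extract a convergent subsequence via the classical limit curve argument exploiting the Cauchy property.

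For the ``only if'' direction, I would carry out Geroch's volume-function construction. Fix a finite Borel measure $\mu$ on $M$ giving positive mass to every non-empty open set (for instance, the volume of a complete auxiliary Riemannian metric, suitably rescaled so that $\mu(M) < \infty$), and set
\[
 t^-(p) = \mu(J^-(p)), \qquad t^+(p) = \mu(\J(p)), \qquad \tau(p) = \log\frac{t^-(p)}{t^+(p)}.
\]
Both $t^\pm$ are strictly positive because $I^\pm(p)$ is a non-empty open subset of $J^\pm(p)$, and they are monotone along causal curves ($t^-$ increasing and $t^+$ decreasing). To check the Cauchy property I would show that along any future-inextendible causal curve $\gamma$ one has $t^-(\gamma(s)) \to \mu(M)$ and $t^+(\gamma(s)) \to 0$, and dually for past-inextendible curves; the intermediate value theorem then forces $\gamma$ to cross each level set of $\tau$, and strict monotonicity of $\tau$ guarantees that it crosses only once.

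The main obstacle is the continuity of $t^\pm$, which is where global hyperbolicity really enters. For a sequence $p_n \to p$ one needs to control both the gain and the loss of mass in $J^\pm(p_n)$: compactness of causal diamonds is precisely what prevents mass from leaking through ``holes'' that appear suddenly in the limit, while the non-existence of imprisoned causal curves yields the correct limiting behaviour of $t^\pm$ along inextendible curves. Once these technical points are established, the verification that $\tau$ is indeed a Cauchy time function is essentially formal.
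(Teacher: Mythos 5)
The paper does not actually prove Theorem~\ref{thm:Cauchytime}; it is quoted as a classical result with references to Geroch and Bernal--S\'anchez, so there is no in-paper argument to compare against. Your sketch does reproduce the standard Geroch volume-function construction, which is the right route for the continuous version of the statement (Bernal--S\'anchez is only needed if one wants a smooth, or temporal, Cauchy function; the theorem as stated here is purely Geroch). However, there is a concrete error in the key asymptotic claim, and the ``if'' direction is under-argued.

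The claim that $t^-(\gamma(s)) \to \mu(M)$ along a future-inextendible causal curve $\gamma$ is false in general. Take Minkowski space and the Rindler observer $\gamma(s) = (\sinh s, \cosh s, 0, \dots, 0)$: this is future-inextendible, but $I^-(\gamma(s))$ stays inside the wedge $\{t < x\}$ for all $s$, so $t^-(\gamma(s))$ is bounded strictly below $\mu(M)$. What Geroch's argument actually establishes is the dual and weaker statement $t^+(\gamma(s)) \to 0$ along future-inextendible curves (and $t^- \to 0$ along past-inextendible ones). The proof is by contradiction: if $\mu(J^+(\gamma(s))) \geq c > 0$ for all $s$, then $\bigcap_s J^+(\gamma(s))$ has positive measure and hence contains some $q$; then $\gamma$ is trapped in the compact diamond $J^+(\gamma(s_0)) \cap J^-(q)$, contradicting non-imprisonment (which holds since globally hyperbolic spacetimes are strongly causal). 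Combined with the fact that $t^-(\gamma(s))$ is increasing and bounded away from $0$, this still gives $\tau(\gamma(s)) = \log\bigl(t^-/t^+\bigr) \to +\infty$, so your intermediate-value conclusion survives once the claim is corrected. You should also use $I^\pm$ rather than $J^\pm$ in the definition of $t^\pm$, since open sets make the semicontinuity arguments cleaner; they give the same measure here because $\partial J^\pm(p)$ is null. Finally, the ``if'' direction deserves more than a one-line appeal to ``the classical limit curve argument'': to apply limit-curve theorems you first need to establish strong causality (or at least non-imprisonment) from the existence of a Cauchy time function, and you need to explain why the limit causal curve actually connects $p$ to $q$. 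This is where the real work of that direction lies, and as written it is a gap.
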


From the compactness condition in Definition~\ref{def:gh} it follows by a result obtained independently by Avez~\cite{Ave} and Seifert~\cite{Sei} that there exists a length-maximizing geodesic between any two causally related points (length-maximizing with respect to the Lorentzian distance, which is then also finite-valued and continuous~\cite{BEE}*{Ch.\ 4}). In that sense globally hyperbolic spacetimes again resemble complete Riemannian manifolds. But here the analogy ends. The Lorentzian distance is far from inducing a metric space structure. Even more troubled is the relationship with geodesic completeness. Neither does global hyperbolicity imply geodesic completeness (the famous Penrose Singularity Theorem~\cite{Pen} actually shows incompleteness under additional curvature bounds) nor the other way round (anti-de Sitter space). In both cases these are actually features rather than bugs of Lorentzian manifolds, and physically highly desired, for instance, for the mathematical existence of black holes. Nonetheless, even the physically undesired assumption of compactness does not guarantee geodesic completeness (Clifton--Pohl torus). Altogether these properties render any Hopf--Rinow type statement for spacetimes virtually a lost case (see the early works of Busemann~\cite{Bus}, Beem~\cite{Beem}; and \cites{ChYa,BeEr,Har3,Erk} for work on completeness of spacelike submanifolds). We reopen the case and characterize global hyperbolicity in an entirely new way.

\begin{thm}\label{mainthm}
 A spacetime $(M,g)$ is globally hyperbolic if and only if there exists a time function $\tau$ such that $(M,\nulld_\tau)$ is a complete metric space.
\end{thm}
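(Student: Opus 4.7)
My plan is to prove the two directions separately, using the basic fact (established earlier in the paper) that for any time function $\tau$ one has $\nulld_\tau(p,q)\geq|\tau(p)-\tau(q)|$, with equality whenever $q\in\J(p)$.

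\emph{Necessity ($\Rightarrow$).} If $(M,g)$ is globally hyperbolic, Theorem~\ref{thm:Cauchytime} together with its smooth strengthening by Bernal--S\'anchez yields a smooth Cauchy temporal function $\tau$, and we may assume in the splitting \eqref{eq:GHsplitting} that every slice $(\Sigma_t,\g_t)$ is complete as a Riemannian manifold. To establish completeness of $\nulld_\tau$, take a Cauchy sequence $(x_n)\subset M$. The inequality above forces $\tau(x_n)\to t\in\real$. I then project each $x_n$ along the timelike integral curves of $\nabla\tau$ onto the Cauchy slice $\Sigma_t$, obtaining points $y_n\in\Sigma_t$ with $\nulld_\tau(x_n,y_n)=|\tau(x_n)-t|\to 0$, so $(y_n)$ is itself a Cauchy sequence in $\nulld_\tau$. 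By the bi-Lipschitz equivalence between $\nulld_\tau|_{\Sigma_t}$ and the slice Riemannian distance alluded to in the abstract, $(y_n)$ converges to some $y\in\Sigma_t$, and since $\tau(x_n)\to\tau(y)=t$, flowing back along $\nabla\tau$ identifies $y$ as the limit of $(x_n)$.

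\emph{Sufficiency ($\Leftarrow$).} Let $\tau$ be a time function with $(M,\nulld_\tau)$ complete. Causality is immediate: a closed causal curve would contradict the strict monotonicity of $\tau$. To get compactness of each causal diamond $\J(p)\cap J^-(q)$, I would invoke the Hopf--Rinow/Cohn--Vossen theorem for length spaces: $\nulld_\tau$ is a length metric (defined as an infimum over piecewise causal curves) inducing the manifold topology, so $(M,\nulld_\tau)$ is a complete, locally compact length space, hence proper. Any $x\in\J(p)\cap J^-(q)$ satisfies $\nulld_\tau(p,x)=\tau(x)-\tau(p)\leq\tau(q)-\tau(p)$, so the diamond is $\nulld_\tau$-bounded, and its closure in the manifold topology is compact by propness. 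A standard limit-curve argument for causal curves then shows this closure coincides with $\J(p)\cap J^-(q)$ itself.

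\emph{Main obstacles.} Quantitatively, the hardest step is the forward direction's bi-Lipschitz equivalence between $\nulld_\tau|_{\Sigma_t}$ and the slice Riemannian distance, which requires uniform control on the lapse $\alpha$ and careful handling of the $\nabla\tau$-flow off the slice; arranging the slices to be simultaneously complete is a further modification of Bernal--S\'anchez. The backward direction is conceptually more delicate because $\tau$ is only assumed continuous: one must verify that $\nulld_\tau$ is a positive definite length metric inducing the manifold topology, which typically requires a local anti-Lipschitz condition that has to be extracted from the completeness hypothesis itself, or a reduction to a smooth temporal function.
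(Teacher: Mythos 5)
Your sufficiency direction ($\Leftarrow$) is a genuinely different route from the paper's. The paper argues directly that $\tau$ must be a \emph{Cauchy} time function: if some future-inextendible causal $\gamma$ had $\sup\tau\circ\gamma<\infty$, then the points $\gamma(n)$ form a $\nulld_\tau$-Cauchy sequence (since they are pairwise causally related), whose limit would provide a forbidden future endpoint; global hyperbolicity then follows from Theorem~\ref{thm:Cauchytime}. You instead go through Hopf--Rinow--Cohn--Vossen for length spaces to get properness and then compactness of diamonds. Both are reasonable, but the paper's argument is leaner and sidesteps the need to verify that $\nulld_\tau$ is a length metric inducing the manifold topology. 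You rightly flag that last point as delicate: for a merely continuous time function $\tau$, $\nulld_\tau$ need not be definite nor induce the manifold topology; both proofs implicitly lean on that compatibility, but yours uses it more heavily.

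The necessity direction ($\Rightarrow$) has a genuine gap. Your plan hinges on (a) choosing a Cauchy temporal function whose slices $(\Sigma_t,\g_t)$ are all complete, and (b) a \emph{global} bi-Lipschitz equivalence between $\nulld_\tau|_{\Sigma_t}$ and $d_{\g_t}$. Neither is available. For (a), the paper's Example~\ref{ex:notcomplete} (due to S\'anchez) exhibits a globally hyperbolic spacetime with a perfectly good Cauchy temporal function $t$ whose $\{t=0\}$ slice is incomplete, and for which $(M,\nulld_t)$ is in fact \emph{incomplete}. So the statement is not that every Cauchy temporal function works; one must select a special one. For (b), the bi-Lipschitz estimates of Theorem~\ref{thm:gtauequiv0} hold only on compact sets, and Example~\ref{ex:globalnotequiv} shows they genuinely fail globally, so you cannot transfer Cauchyness of $(y_n)$ in $\nulld_\tau$ to Cauchyness in $d_{\g_t}$ across a noncompact slice. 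The paper resolves both issues at once by invoking the existence of an $h$-steep Cauchy temporal function with respect to a \emph{complete} Riemannian metric $h$ (Bernard--Suhr, Minguzzi): $h$-steepness gives a \emph{global} anti-Lipschitz bound $\tau(q)-\tau(p)\geq d_h(p,q)$ for $q\in J^+(p)$, and then completeness of $(M,\nulld_\tau)$ follows from \cite[Thm.~1.6]{AlBu}. That is the key idea your sketch is missing.
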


Here $\nulld_\tau$ is the null distance of Sormani and Vega~\cite{SoVe}, defined in 2016 with the purpose of studying geometric stability problems in General Relativity by means of a metric (measure) convergence theory, and to develop robust tools for spacetimes of low regularity (partly already realized in \cites{SoVe,AlBu,SaSo,KuSt,BuGH,Ve}). The $\hat{\;}$ in $\nulld_\tau$ indicates the dependence on the causal cone structure and $\tau$ the link to the time function.

\begin{defn}\label{def:nulld}
 Let $(M,g)$ be a spacetime with time function $\tau$. A \emph{piecewise causal path} $\beta \colon [a,b]\to M$ is given by a partition $a =s_0 < s_1 < \ldots s_{k-1} < s_k = b$ on which each restriction $\beta|_{[s_{i-1},s_i]}$ is either a future- or past-directed causal curve. The \emph{null length} of $\beta$ is given by
 \[
  \nullL_\tau (\beta) := \sum_{i=1}^k |\tau(\beta(s_{i}))-\tau(\beta({s_{i-1}}))|.
 \]
 The \emph{null distance} between two points $p,q \in M$ is
 \[
  \nulld_\tau(p,q) := \inf \{ \nullL_\tau(\beta) \mid \beta \text{ piecewise causal path between $p$ and $q$} \}.
 \]
\end{defn}

Clearly, $\nulld_\tau$ is symmetric and satisfies the triangle inequality, but positive definiteness does not hold for all $\tau$. For locally anti-Lipschitz time functions one indeed obtains a conformally invariant length-metric space $(M,\nulld_\tau)$ that induces the manifold topology (see \cite{SoVe}*{Thm.\ 4.6} and \cite{AlBu}*{Thm.\ 1.1}). Note that the relevance of an anti-Lipschitz condition was recognized already earlier and is important for time functions also in other contexts \cites{AGH,CGM}. 
Throughout most of this manuscript we will assume slightly more, namely that $\tau$ is a (weak) temporal function. 

\begin{defn}\label{LaLdef}
 Let $(M,g)$ be a spacetime with time function $\tau \colon M \to \real$. Let $h$ be any Riemannian metric on $M$ and $d_h$ the associated distance function. If for every point $x$ there exists a neighborhood $U$ of $x$ and $C\geq 1$ such that
  \begin{align}\label{LaL}
   (p,q) \in J^+ \cap (U \times U)  \Longrightarrow \frac{1}{C} d_h (p,q) \leq \tau(q) - \tau(p) \leq C d_h(p,q),
 \end{align}
 then we say that $\tau$ is a \emph{weak temporal function}. If $\tau$ satisfies only the first $\leq$ in \eqref{LaL} it is called \emph{locally anti-Lipschitz}, if only the second $\leq$ it is called \emph{locally Lipschitz}.
\end{defn}

Based on our extension of \eqref{LaL} to an entire open set in Section~\ref{sec:equiv} we show that weak temporal functions are indeed locally Lipschitz (in the usual sense) and have a timelike gradient almost everywhere. The standard (smooth) temporal functions, and also regular cosmological time functions \`{a} la Andersson--Galloway--Howard~\cite{AGH} and Wald--Yip~\cite{WaYi}, are weak temporal functions. In fact, working with temporal functions is no restriction since every smooth spacetime that admits a time function also admits a temporal function \cite{BeSa}*{Thm.\ 1.2}. The true advantage of temporal functions over other time functions is the orthogonal decomposition $g = -\alpha d\tau^2 + \g_\tau$ (although no product splitting of the manifold, see \cite{MuSa}*{Lem.\ 3.5}). This property is key in the proof of the subsequent results. Time functions are, however, also a useful tool in weaker nonsmooth geometric settings (see, for instance, \cites{BuGH,BDGSS,KuSt}) and also regular cosmological time functions are, in general, not smooth even on smooth spacetimes. Therefore, we decided to prove our results for the optimal regularity class.

\begin{thm} \label{thm:nonsmooth}
Let $(M,g)$ be a spacetime, $\tau \colon M \to \real$ be a weak temporal function, and $h$ be a Riemannian metric on $M$. Then, for each compact set $K \subseteq M$, there exists a constant $C \geq 1$ such that for all $p,q \in K$,
\begin{equation} \label{eq:equivnonsmooth}
    \frac{1}{C} d_{h}(p,q) \leq \nulld_\tau(p,q) \leq {C} d_{h}(p,q).
\end{equation}
\end{thm}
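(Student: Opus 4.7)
The plan is to establish local bi-Lipschitz equivalence of $\nulld_\tau$ and $d_h$ near each point, and then globalize to $K$ by a compactness argument together with a bootstrap controlling where near-minimizing piecewise causal paths can wander. Without loss of generality I will take $h$ to be complete, since \cite[Thm.~4.5]{Bur} guarantees that bi-Lipschitz bounds with respect to any two Riemannian metrics agree on compact sets; closed $d_h$-bounded subsets of $M$ are then compact.

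For the upper bound $\nulld_\tau(p,q) \leq C\, d_h(p,q)$ I would first prove a local version: around each $x \in M$ there exists a neighborhood $V_x$ contained in the admissibility neighborhood $U_x$ of Definition~\ref{LaLdef} such that for every $p,q \in V_x$ one can find a common future point $r \in \J(p) \cap \J(q) \cap U_x$ with $d_h(p,r),\, d_h(q,r) \leq C'\, d_h(p,q)$. This is standard in a convex normal chart where the causal cones are nearly constant: lifting both $p$ and $q$ in the chart's time coordinate by slightly more than their spatial separation produces such an $r$. The Lipschitz half of admissibility then yields $\nulld_\tau(p,q) \leq (\tau(r)-\tau(p)) + (\tau(r)-\tau(q)) \leq C_{U_x}\bigl(d_h(p,r)+d_h(q,r)\bigr)$. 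To globalize, let $K' = \overline{B}_{\operatorname{diam}_h(K)}(K)$ (compact since $h$ is complete), cover $K'$ by finitely many $V_{x_i}$ with $d_h$-Lebesgue number $\lambda > 0$, and for $p,q \in K$ connect them by an $h$-minimizing geodesic in $K'$, partition it into pieces of $h$-length below $\lambda$, and sum the local upper bounds.

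For the lower bound $d_h(p,q) \leq C\, \nulld_\tau(p,q)$, the key local tool is: for any piecewise causal $\beta\colon [a,b]\to M$ whose image lies in a fixed compact set $K_1$, one has $d_h(\beta(a),\beta(b)) \leq C_{K_1}\, \nullL_\tau(\beta)$. To prove this, cover $K_1$ by finitely many admissibility neighborhoods with $d_h$-Lebesgue number $\delta > 0$, and uniformly refine the defining partition of $\beta$ so that consecutive sample points have $d_h$-distance below $\delta$; each refined piece is then monotone causal with both endpoints in a common $U_{x_i}$, the anti-Lipschitz condition applies at every step, and the triangle inequality for $d_h$ together with the fact that refinement of a monotone piece does not change $\nullL_\tau$ yields the claim.

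The main obstacle is that $\tau$ is not assumed proper, so a near-minimizing piecewise causal path between two points of $K$ could in principle escape to spatial infinity. I would circumvent this by bootstrapping the local lower bound: fix a compact $K_1 \supseteq K$ with $\operatorname{dist}_h(K,\partial K_1) \geq R > 0$, and apply the local estimate to the initial arc of $\beta$ up to its first exit from $K_1$; this shows that any piecewise causal path from $K$ that leaves $K_1$ must satisfy $\nullL_\tau(\beta) \geq R/C_{K_1}$. Consequently, for $p,q \in K$ with $\nulld_\tau(p,q) < R/(2 C_{K_1})$ every sufficiently near-minimizing path remains in $K_1$, and the local estimate gives the lower bound after passing to the infimum; in the complementary range the trivial estimate $d_h(p,q) \leq \operatorname{diam}_h(K)$ absorbs the constant ratio, completing the proof.
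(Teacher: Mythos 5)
Your proof is correct and reaches the same conclusion, but it takes a genuinely different route from the paper on both of its key steps, and the difference is worth spelling out.

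For the local upper bound $\nulld_\tau(p,q) \leq C\,d_h(p,q)$, the paper invokes Bernal--S\'anchez to obtain an auxiliary temporal function $\tilde\tau$, introduces the Wick-rotated Riemannian metric $g_W = d\tilde\tau^2 + \g$, and explicitly constructs a zigzag sequence of piecewise causal curves $\beta_n$ approximating the $g_W$-geodesic between $p$ and $q$ (Lemma~\ref{lem:wick2}); the estimate then passes to general Riemannian metrics via \cite{Bur}. You instead pick a common future point $r \in J^+(p)\cap J^+(q)$ in the admissibility neighborhood with $d_h(p,r), d_h(q,r) \lesssim d_h(p,q)$ and feed the two causal segments $p\to r \to q$ directly into the Lipschitz half of \eqref{LaL}. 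This sidesteps temporal functions and Wick rotation entirely, which is a genuine simplification for the purposes of Theorem~\ref{thm:nonsmooth} alone (and, as the authors note in passing, fits the regime of Lorentzian length spaces where temporal functions are unavailable). The price is that the paper's Lemmas~\ref{lem:wick} and \ref{lem:wick2} also do heavy lifting elsewhere (Section~\ref{sec:causality}), so the paper is front-loading infrastructure it needs later.

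For globalization, the paper uses a single contradiction argument: a failing sequence $(p_n,q_n)$ in $K$ is extracted, must converge to a single point, and the local bound gives a contradiction. You globalize constructively: the upper bound by covering a slightly larger compact $K'$ with a Lebesgue number and subdividing an $h$-minimizing geodesic, and the lower bound by first proving that piecewise causal curves through a compact $K_1$ satisfy $d_h(\beta(a),\beta(b)) \leq C_{K_1}\nullL_\tau(\beta)$ (via refinement into admissibility neighborhoods, using that refinement of a monotone causal piece does not change its null length), and then bootstrapping: any near-minimizing path between points of $K$ with small $\nulld_\tau$ cannot exit $K_1$ because its initial arc to the exit already accrues too much null length. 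This bootstrap is the right fix for the genuine issue that $\tau$ need not be proper, and the dichotomy with the trivial estimate $d_h(p,q) \leq \operatorname{diam}_h(K)$ in the remaining range is clean. All steps check out; the proof is sound.
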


Note that the lower bound follows from the locally anti-Lipschitz property of $\tau$, and the upper bound from the corresponding locally Lipschitz bound. Theorem~\ref{thm:nonsmooth} immediately implies that two stably causal spacetimes are metrically equivalent on compact sets in the following sense.

\begin{cor}\label{cor:differentg}
 Let $M$ be a smooth manifold and $g$ and $\tilde g$ be spacetime metrics on $M$ with weak temporal functions $\tau$ and $\tilde\tau$ (and corresponding null distances $\nulld_\tau$ and $\nulld_{\tilde\tau}$), respectively. Then, for each compact set $K \subseteq M$, there exists a constant $C\geq 1$ such that for all $p,q \in K$,
\begin{align*}
  \frac{1}{C} \hat d_{\tau}(p,q) \leq \hat d_{\tilde\tau}(p,q) \leq C \hat d_{\tau}(p,q). \qedhere
 \end{align*}
\end{cor}

Amongst others, the proof of Theorem~\ref{thm:nonsmooth} requires semiglobal techniques to go from open sets to compact sets. On a global scale the situation is even more involved and it is here where globally hyperbolic spacetimes really shine. We prove the following.

\begin{thm} \label{thm:admcausalityglobal}
 Let $(M,g)$ be a globally hyperbolic spacetime and $\tau$ a locally anti-Lipschitz time function such that all nonempty level sets are future (or past) Cauchy. Then the null distance \emph{encodes causality}, that is, for any $p,q \in M$,
 \begin{align}\label{encodingcausality}
  q \in J^+(p) \Longleftrightarrow \nulld_\tau(p,q) = \tau(q) - \tau(p).
 \end{align}
\end{thm}

The $\Longrightarrow$ direction in Theorem~\ref{thm:admcausalityglobal} is trivial. Sormani and Vega~\cite{SoVe}*{Thm.\ 3.25} showed that the converse holds for warped product spacetimes with complete Riemannian fiber and suitable temporal functions. It remained an open problem to determine under which general circumstances causality is encoded. Our Theorem~\ref{thm:admcausalityglobal} provides a sharp answer both in terms of regularity as well as the causality class (see counterexamples in Section~\ref{ssec:causalitycount}). Initially we proved this result for Cauchy temporal functions in Theorem~\ref{thm:causalityglobal}. Independently and simultaneously, Sakovich and Sormani~\cite{SaSo}*{Thm.\ 4.1} have obtained a different global causality encoding result where they allow for general anti-Lipschitz time functions, but require them to be proper. This properness assumption, in fact, implies that the spacetime must be globally hyperbolic with \emph{compact} Cauchy level sets (see Section~\ref{ssec:admcausalityglobal}). Both approaches yield \emph{local} encodement of causality on any stably causal spacetime (see Theorem~\ref{thm:causalitylocal} and \cite{SaSo}*{Thm.\ 1.1}). Upon studying the proofs of \cite{SaSo} we noticed that by combining part of their local  arguments~\cite{SaSo}*{Thm.\ 1.1} with our global proof of Theorem~\ref{thm:causalityglobal} we can obtain Theorem~\ref{thm:admcausalityglobal} which is optimal both in view of regularity as well as causality. It is precisely this optimality, together with the observation that $\tau$ having future (or past) Cauchy level sets is actually sufficient, that allows us to conclude with the following application.

\begin{cor}\label{cor:cosmo}
 Let $(M,g)$ be a spacetime that admits a regular cosmological time function $\tau$. Then $\nulld_\tau$ encodes causality globally.
\end{cor}

The manuscript is structured as follows. In Section~\ref{sec:equiv} we prove Theorem~\ref{thm:nonsmooth} and Corollary~{\ref{cor:differentg}. In the proof we make use of the orthogonal decomposition of the spacetime metric with respect to a temporal function and techniques developed in \cite{Bur}. We also present counterexamples that show that the weak temporal condition cannot be relaxed. In Section~\ref{sec:causality} we prove Theorem~\ref{thm:admcausalityglobal} and provide counterexamples for non-Cauchy locally anti-Lipschitz functions. Nonetheless, a local result for any temporal function on any stably causal spacetime is also obtained. In Section~\ref{sec:complete} we prove Theorem~\ref{mainthm} and show that completely uniform temporal functions (very recently introduced in \cites{BeSu,BeSu2}) guarantee completeness of $(M,\nulld_\tau)$.


\section{Bi-Lipschitz bounds}\label{sec:equiv}

In this section we prove Theorem~\ref{thm:nonsmooth} and Corollary~\ref{cor:differentg} of the introduction.

Since weak temporal functions satisfy $\nulld_\tau(p,q) = \tau(q) - \tau(p)$ for causally related points, the condition \eqref{LaL} can be viewed as a restricted local metric equivalence.
In order to extend this property to a true local metric equivalence we make use of several technical results related to temporal functions obtained in Section~\ref{ssec:wick}, some of which are also used in Section~\ref{sec:causality}. To extend the corresponding local result to weak temporal functions and to compact sets we employ semiglobal techniques similar to those in \cite{Bur}. These final steps of the proof of Theorem~\ref{thm:nonsmooth} are carried out in Section~\ref{ssec:equiv}.

Theorem~\ref{thm:nonsmooth} implies that we can compare the null distances of any two spacetime metrics with entirely different causal cones as stated in Corollary~\ref{cor:differentg} of the introduction.
In particular, the null distance structures with respect to weak temporal functions are equivalent on compact sets of a fixed spacetime.

\begin{cor}\label{equivthm1}
 Let $\tau_1$, $\tau_2$ be two weak temporal functions on a spacetime $(M,g)$, and $\nulld_{\tau_1}$, $\nulld_{\tau_2}$ their associated null distances. Then, for every compact set $K \subseteq M$, there exists a constant $C \geq 1$ such that for all $p,q \in K$,
 \begin{align*}
  \frac{1}{C} \hat d_{\tau_1}(p,q) \leq \hat d_{\tau_2}(p,q) \leq C \hat d_{\tau_1}(p,q). \qedhere
 \end{align*}
\end{cor}

Note that Corollary~\ref{equivthm1} was already announced in \cite{AlBu}*{p.\ 7739} with an alternative direct proof. The advantage of such a proof is that it does not require the use of temporal functions which, for instance, do not exist in theory of Lorentzian length spaces \cite{BuGH} (while weak temporal functions only require local Lipschitz conditions and can still be considered). We therefore provide this short alternative proof of Corollary~\ref{equivthm1} in Section~\ref{ssec:altequivtauproof}.

The question remains how optimal all the results just mentioned are with respect to the regularity class of time functions considered. The lower bound in \eqref{LaL} is the standard local anti-Lipschitz assumption on $\tau$ and needed to even obtain a sensible metric space $(M,\hat d_\tau)$ \cite{SoVe}*{Thm.\ 4.6}. In Section~\ref{ssec:equivcount} we show that the locally Lipschitz assumption (the upper bound in \eqref{LaL}) cannot be dropped either. Besides,the two assumptions together imply that weak temporal functions have a timelike gradient almost everywhere, motivating their name (Section~\ref{ssec:gradient}). Moreover, we show that even under the best circumstances, a general global version of our weakest result presented in this section, namely Corollary~\ref{equivthm1},
cannot be expected. This implies that also Theorem~\ref{thm:nonsmooth} and Corollary~\ref{cor:differentg} in general do not hold globally.


\subsection{Temporal functions and Wick-rotated metrics}\label{ssec:wick}

The aim of this section is to compare the null distance to the distance obtained with respect to the Wick-rotated metric that exists for a temporal function. Recall that a spacetime admits a smooth temporal function whenever it admits a time function by a well-known result of Bernal and S\'anchez \cite{BeSa}*{Thm.\ 1.2}.

\begin{defn}
 Let $(M,g)$ be a spacetime. A \emph{temporal function} is a smooth function $\tau \colon M \to \real$ with past-directed timelike gradient $\nabla\tau$.
\end{defn}

In the local proofs in this section we make use of a weaker splitting result for temporal functions (compare to \eqref{eq:GHsplitting} in the globally hyperbolic case).

\begin{lem}[{M\"uller--S\'anchez \cite{MuSa}*{Lem.\ 3.5}}]
 If a spacetime $(M,g)$ admits a temporal function $\tau$, then the metric $g$ admits an orthogonal decomposition
 \begin{align}\label{ga1}
  g = -\alpha d\tau^2 + \bar g,
 \end{align}
 where $\alpha = |g(\nabla\tau,\nabla\tau)|^{-1} >0$ and $\g$ is a symmetric $2$-tensor which vanishes on $\nabla \tau$ and is positive definite on the complement.
\end{lem}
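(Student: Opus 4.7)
The plan is to guess the formula for $\g$ from the required decomposition and then verify the two claimed properties by a direct pointwise computation. The only candidate compatible with $g = -\alpha \, d\tau^2 + \g$ is
\[
 \g := g + \alpha \, d\tau \otimes d\tau,
\]
with $\alpha = |g(\nabla\tau, \nabla\tau)|^{-1}$ as in the statement. This $\alpha$ is smooth and positive, because $\tau$ being temporal forces $\nabla\tau$ to be a nowhere vanishing past-directed timelike vector field, so $g(\nabla\tau, \nabla\tau) < 0$ everywhere. Consequently $\g$ is a well-defined smooth symmetric $2$-tensor on all of $M$.

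Next I would verify $\g(\nabla\tau, X) = 0$ for every vector field $X$. Using the identity $d\tau(X) = X(\tau) = g(\nabla\tau, X)$, one computes
\[
 \g(\nabla\tau, X) = g(\nabla\tau, X)\bigl(1 + \alpha \, g(\nabla\tau, \nabla\tau)\bigr),
\]
and the bracket vanishes by the defining formula for $\alpha$. This retroactively explains why $\alpha$ must take the stated value: it is the unique conformal factor that kills the $\nabla\tau$-component of $g$. Positive definiteness of $\g$ on the $g$-orthogonal complement of $\nabla\tau$ is then automatic, since for $X \perp_g \nabla\tau$ one has $d\tau(X) = g(\nabla\tau, X) = 0$ and hence $\g(X,X) = g(X,X)$; and $g$ is positive definite on the orthogonal complement of a timelike direction in any Lorentzian vector space.

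The only point worth guarding against is the transition from pointwise linear algebra to a smooth global object, but this is guaranteed by the smoothness and nowhere vanishing of $\nabla\tau$ — both built into the definition of a temporal function. In particular, no Cauchy hypothesis, no product splitting, and no global causality condition enter the argument; the lemma is essentially a one-line consequence of having a smooth timelike gradient on $M$.
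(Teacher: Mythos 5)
Your proof is correct and is essentially the standard argument; the paper itself does not reprove this lemma but cites it from M\"uller--S\'anchez, and the verification you give — set $\g := g + \alpha\, d\tau \otimes d\tau$, check that the choice $\alpha = -g(\nabla\tau,\nabla\tau)^{-1}$ annihilates the $\nabla\tau$ direction, and observe that $\g$ restricts to $g$ on the $g$-orthogonal complement where $g$ is positive definite — is exactly what one would write down. The only stylistic point worth adding is that $\g$ is in fact positive semi-definite on all of $TM$ with kernel precisely $\operatorname{span}(\nabla\tau)$, which follows immediately from decomposing any tangent vector as $c\,\nabla\tau + w$ with $w \perp_g \nabla\tau$ and using $\g(v,v)=g(w,w)$; this makes the phrase "positive definite on the complement" unambiguous.
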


The temporal function $\tau$ is said to be \emph{steep} if $g(\nabla\tau,\nabla\tau) \leq -1$, and hence $\alpha = |g(\nabla\tau,\nabla\tau)|^{-1} \leq 1$. Not every (even causally simple) spacetime admits a steep temporal function \cite{MuSa}*{Thm.\ 1.1 and Ex.\ 3.3}, but we can always rewrite \eqref{ga1} as $g = -\alpha (d\tau^2 + \tilde{g})$, for $\tilde{g}= \alpha^{-1} \g$. Since the null distance is conformally invariant we assume from now on, without loss of generality, that $g$ is of the form
\begin{align}\label{g}
 g = -d\tau^2 + \g.
\end{align}

Next we perform a standard trick to obtain a Riemannian metric $g_W$ from $g$ and $\tau$. This technique is called Wick-rotation in the physics literature. The \emph{Wick-rotated metric} $g_W$ is given by
 \begin{align}\label{gW}
 g_W := d\tau^2 + \g.
 \end{align}
We denote the associated norm, length functional, and Riemannian distance by $\left\Vert \cdot \right\Vert_W$, $L_W$, and $d_W$ respectively. We proceed to compare $d_W$ and $\nulld_\tau$.
 
\begin{lem} \label{lem:wick}
  Let $(M,g)$ be a spacetime equipped with a smooth temporal function $\tau$, so that $g$ and $g_W$ are given by \eqref{g} and \eqref{gW}, respectively. Let $\beta \colon [a,b] \to M$ be a piecewise causal curve. Then
  \[
  \nullL_\tau (\beta) \leq L_W(\beta) \leq \sqrt{2} \nullL_\tau(\beta),
  \]
  and thus for any $p,q \in M$,
  \begin{align}\label{eq:dWdtau}
   d_W(p,q) \leq \sqrt{2} \nulld_\tau(p,q).
  \end{align}
 \end{lem}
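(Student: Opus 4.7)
The plan is to reduce everything to estimates along a single causal piece and then sum. Fix a causal arc $\gamma := \beta|_{[s_{i-1},s_i]}$. By Rademacher's Theorem (applied with respect to any auxiliary Riemannian metric), $\gamma$ is differentiable almost everywhere, and where it is, $\dot\gamma$ is future- or past-directed causal. Writing $\dot\tau := (\tau \circ \gamma)'$ and using the decomposition $g = -d\tau^2 + \g$, the causal condition $g(\dot\gamma,\dot\gamma) \leq 0$ becomes
\[
\g(\dot\gamma,\dot\gamma) \leq \dot\tau^2.
\]

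Next I would verify that $\g$ is positive \emph{semi}-definite on all of $TM$, not only on the $\g$-complement of $\nabla\tau$. For any $v \in T_pM$ write $v = c \nabla\tau + w$ with $w$ in the $\g$-complement of $\nabla\tau$; since $\g$ vanishes on $\nabla\tau$, this gives $\g(v,v) = \g(w,w) \geq 0$. Combined with the previous inequality, the Wick-rotated norm then satisfies
\[
|\dot\tau|^2 \;\leq\; \lVert \dot\gamma \rVert_W^2 \;=\; \dot\tau^2 + \g(\dot\gamma,\dot\gamma) \;\leq\; 2|\dot\tau|^2
\]
almost everywhere on $[s_{i-1},s_i]$. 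Taking square roots and integrating gives
\[
\int_{s_{i-1}}^{s_i} |\dot\tau|\, ds \;\leq\; L_W(\gamma) \;\leq\; \sqrt{2} \int_{s_{i-1}}^{s_i} |\dot\tau|\, ds.
\]
Since $\tau$ is a time function and $\gamma$ is a causal arc, $\tau\circ\gamma$ is monotone and Lipschitz on $[s_{i-1},s_i]$, so the fundamental theorem of calculus yields $\int|\dot\tau|\,ds = |\tau(\gamma(s_i))-\tau(\gamma(s_{i-1}))|$. Summing over the $k$ causal pieces of $\beta$ produces the desired two-sided comparison $\nullL_\tau(\beta) \leq L_W(\beta) \leq \sqrt{2}\,\nullL_\tau(\beta)$.

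For the distance bound, any piecewise causal $\beta$ from $p$ to $q$ is locally Lipschitz and hence rectifiable in $(M,g_W)$, so $d_W(p,q) \leq L_W(\beta) \leq \sqrt{2}\,\nullL_\tau(\beta)$. Taking the infimum over all such $\beta$ gives $d_W(p,q) \leq \sqrt{2}\,\nulld_\tau(p,q)$; if no piecewise causal path connects $p$ and $q$ the right-hand side is $+\infty$ and there is nothing to prove.

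The main technical point I expect to need care about is the regularity: $\dot\gamma$ is only defined almost everywhere, so the pointwise identity $\lVert\dot\gamma\rVert_W^2 = \dot\tau^2 + \g(\dot\gamma,\dot\gamma)$ and the subsequent integration must be justified via Rademacher, together with the monotonicity of $\tau\circ\gamma$ on causal arcs. Both ingredients are standard, so the real content of the lemma is the pointwise sandwich $|\dot\tau|^2 \leq \lVert\dot\gamma\rVert_W^2 \leq 2|\dot\tau|^2$ coming from the $g = -d\tau^2 + \g$ decomposition.
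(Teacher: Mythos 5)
Your argument is correct and follows essentially the same route as the paper's proof: use the orthogonal decomposition $g=-d\tau^2+\g$ to deduce from $g(\dot\beta,\dot\beta)\leq 0$ the pointwise bound $\g(\dot\beta,\dot\beta)\leq |\dot\beta^\tau|^2$, sandwich $\|\dot\beta\|_W^2$ between $|\dot\beta^\tau|^2$ and $2|\dot\beta^\tau|^2$, and integrate. Your explicit checks that $\g$ is positive semi-definite on all of $TM$ and that the infimum defining $d_W$ may be taken over locally Lipschitz curves (the paper cites \cite[Cor.~3.13]{Bur} for this latter point) are sensible unpackings of steps the paper takes for granted, but the underlying argument is the same.
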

 
 \begin{proof}
 By assumption $\tau$ is smooth and also the tangent vector $\dot\beta$ exists almost everywhere, thus we can write both length functionals in terms of the integrals
 \begin{align*}
     \nullL_\tau (\beta) &= \int_a^b \vert (\tau \circ \beta)'(s) \vert ds, \\
     L_W(\beta) &= \int_a^b \Vert \dot\beta(s) \Vert_W ds = \int_a^b \sqrt{\vert d\tau(\dot \beta(s)) \vert^2 + \Vert \dot\beta(s) \Vert^2_{\g}} ds.
 \end{align*}
 Since $\dot\beta^\tau(s):=(\tau \circ \beta)'(s) = d\tau_{\beta(s)}(\dot\beta(s))$ and the curve $\beta$ is piecewise causal, that is, $g(\dot\beta,\dot\beta) \leq 0$ almost everywhere, we have that
 \begin{equation*}
     \vert \dot\beta^\tau \vert \geq \Vert \dot\beta \Vert_{\g},
 \end{equation*}
 and the inequalities for the lengths follow immediately.
 
 The second inequality for lengths descends to the level of distances, because the class of piecewise causal curves considered (for $\nulld_\tau$) is contained in the class of all locally Lipschitz curves (strictly speaking, $d_W$ is obtained via piecewise smooth curves, but \cite{Bur}*{Cor.\ 3.13} shows that it is the same as the intrinsic metric obtained via the class of absolutely continuous curves, each of which has a locally Lipschitz reparametrization \cite{AGS}*{Lem.\ 1.1.4}).
 \end{proof}
 
 The next lemma proves a reverse inequality to \eqref{eq:dWdtau}, albeit only locally and via a more involved proof (because $d_W$ requires knowledge also about non-causal curves).
 
\begin{lem} \label{lem:wick2}
 Let $(M,g)$ be a spacetime equipped with a smooth temporal function $\tau$, so that $g$ and $g_W$ are given by \eqref{g} and \eqref{gW}, respectively. Then, around every $x \in M$ there is a neighborhood $U$ such that for all $p,q \in U$,
  \begin{equation*}
      \nulld_\tau(p,q) \leq 4 d_W(p,q).
  \end{equation*}
\end{lem}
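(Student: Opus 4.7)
The plan is to compare $\nulld_\tau(p,q)$ with $d_W(p,q)$ locally by explicitly constructing an efficient piecewise causal curve between $p$ and $q$ (rather than trying to ``causalize'' an arbitrary $g_W$-minimizing curve, which seems delicate). Using the orthogonal decomposition $g=-d\tau^2+\g$ (valid after the conformal rescaling mentioned before \eqref{g}, which preserves $\nulld_\tau$), I would work in local coordinates $(t,y^1,\dots,y^n)$ around $x$ in which $\tau$ coincides with the first coordinate and $\g$ restricts to a smooth $t$-dependent family of Riemannian metrics $\g_t$ on the level sets. By shrinking a relatively compact coordinate neighborhood $U$ of $x$, I can ensure that the spatial metrics $\g_t$ at different slices inside $U$ are mutually bi-Lipschitz with constant arbitrarily close to $1$, and that $U$ is geodesically convex for a reference slice metric.

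Given $p=(t_p,y_p)$ and $q=(t_q,y_q)$ in $U$ with $t_p\leq t_q$ (the reverse case being symmetric), let $\bar d$ denote the distance from $y_p$ to $y_q$ measured in the fixed spatial metric $\g_{\tau(x)}$. I would then build a piecewise causal path as follows: take a future-directed causal segment $\alpha_1$ from $p$ to $(T,y_q)$ that, in coordinates, traverses a minimizing spatial geodesic from $y_p$ to $y_q$ at unit coordinate $t$-speed, followed by a past-directed timelike segment $\alpha_2$ from $(T,y_q)$ straight down the $y_q$-worldline to $q$. Choosing $T=\max\{t_p+2\bar d,\, t_q\}$ guarantees that $\alpha_1$ is causal: the spatial displacement is covered in coordinate time $T-t_p$ with $\g_t$-speed bounded by $|d y/dt|_{\g_{\tau(x)}}(1+\eta)\leq 1$, using the closeness of the spatial metrics on $U$. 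The null length of this piecewise causal curve is
\[
\nullL_\tau(\alpha_1)+\nullL_\tau(\alpha_2)=(T-t_p)+(T-t_q)\leq \max\{4\bar d,\, t_q-t_p\}.
\]

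To finish, I would use that any curve $\gamma$ joining $p$ to $q$ satisfies $L_W(\gamma)\geq \int|dy/ds|_{\g_t}\,ds\geq (1+\eta)^{-1}\bar d$, so that $\bar d\leq (1+\eta)d_W(p,q)$, and that $|t_q-t_p|\leq d_W(p,q)$ is immediate from projecting onto the $\tau$-factor. Combining with the previous display, $\nulld_\tau(p,q)\leq 4(1+\eta)d_W(p,q)$, and shrinking $U$ to make $\eta$ small enough yields the claimed constant $4$.

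The main obstacle is the quantitative control of the varying spatial metrics $\g_t$, which enters twice: first to certify causality of $\alpha_1$ (the spatial speed of the constructed curve is measured in a metric $\g_{t_p+s}$ that differs from the reference $\g_{\tau(x)}$ used to define $\bar d$), and second to compare $\bar d$ with $d_W(p,q)$. Both issues are handled by the same local bi-Lipschitz estimate between $\g_t$ and $\g_{\tau(x)}$, valid on a sufficiently small neighborhood $U$ by smoothness of the orthogonal decomposition; this is precisely the role the temporal (rather than just locally anti-Lipschitz) assumption plays here.
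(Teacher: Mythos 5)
Your argument takes a genuinely different route from the paper's. The paper starts from the $g_W$-minimizing geodesic $\gamma$ and perturbs only its $\tau$-component by a rapidly oscillating sawtooth $3f_n$ of shrinking amplitude, so that the resulting curves $\beta_n$ become piecewise causal (the large slope $\pm 3$ dominates the spatial contribution) while $\nullL_\tau(\beta_n)\leq 4 L_W(\gamma)$. You instead build a concrete two-segment piecewise causal curve: a future-directed causal ``ascent'' $\alpha_1$ that carries the spatial displacement at controlled slope, followed by a past-directed timelike ``descent'' $\alpha_2$ along the worldline of $y_q$. This is more elementary (no sawtooth sequence, no uniform approximation of $\gamma$, no estimate of the form \eqref{eq:2second}) at the cost of requiring $U$ to be simultaneously cylindrical in the flow-box coordinates, spatially $\g_{\tau(x)}$-convex, $g_W$-convex, and small enough that all slice metrics $\g_t$ inside $U$ are $(1+\eta)$-bi-Lipschitz to $\g_{\tau(x)}$; each of these is achievable but you should say so explicitly, since you use $g_W$-convexity to justify the lower bound $d_W(p,q)\geq (1+\eta)^{-1}\bar d$ (the minimizing $g_W$-curve must stay inside $U$ for the bi-Lipschitz comparison of $\g_t$ to apply).

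There is one genuine slip at the very end. With your choice $T=\max\{t_p+2\bar d,\,t_q\}$ you obtain $\nulld_\tau(p,q)\leq 4(1+\eta)\,d_W(p,q)$, and for every $\eta>0$ this strictly exceeds $4\,d_W(p,q)$. Shrinking $U$ drives $\eta\to 0$ but never to $0$, so you get $4+\varepsilon$ for every $\varepsilon>0$ --- \emph{not} the constant $4$ claimed in the lemma. The repair is to stop wasting a factor of two in the slope: set $T:=\max\{t_p+(1+2\eta)\bar d,\,t_q\}$. Then the coordinate $\g_{\tau(x)}$-speed of $\alpha_1$ is $\leq (1+2\eta)^{-1}$, hence its $\g_t$-speed is $\leq (1+\eta)(1+2\eta)^{-1}<1$, so $\alpha_1$ is still causal, and
\[
\nullL_\tau(\alpha_1)+\nullL_\tau(\alpha_2)=2(T-t_p)-(t_q-t_p)\leq\max\bigl\{2(1+2\eta)\bar d,\;t_q-t_p\bigr\}\leq 2(1+2\eta)(1+\eta)\,d_W(p,q),
\]
which is $\leq 4\,d_W(p,q)$ once $\eta$ is small. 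So your construction in fact gives a constant arbitrarily close to $2$, sharper than the paper's $4$; the issue is purely that the parameter $T$ was chosen too conservatively in your write-up.
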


\begin{proof}
Let $x \in M$ and $\dim M = N+1$. The idea is to use coordinates adapted to the vector field $\nabla\tau$ and the hypersurface $\mathcal{S}_{\tau(x)} :=\tau^{-1}(\tau(x))$ to explicitly construct approximating piecewise causal curves $\beta_n$ in a neighborhood $U$ of $x$. The construction of these coordinates resembles that of the well-known Gaussian normal coordinates, the only difference being that we use the flow of $\nabla \tau$ instead of the normal geodesics to the hypersurface.
 
\medskip
\textbf{Step 1. Construction of $U$ and coordinates.}
The level set $\mathcal{S}_{\tau(x)}$ is a hypersurface in $M$ (since $d\tau\neq 0$), therefore there exists a neighborhood $V \subseteq \mathcal{S}_{\tau(x)}$ of $x$ and a coordinate map $\varphi \colon V \to \varphi(V)\subseteq \real^N$. Let $y \in \varphi(V)\subseteq\real^N$ and let $c_y$ denote the unique integral curve of $-\nabla \tau$ in $M$ with $c_y(0) \in V \subseteq \mathcal{S}_{\tau(x)} \subseteq M$ such that $\varphi(c_y(0)) = y$. Since we have assumed that $d\tau(- \nabla \tau) = - g(\nabla \tau, \nabla \tau) = 1$, we have that
\begin{equation} \label{eq:firstcootau}
 \tau(c_y(t)) = \tau(x) + t.
\end{equation}
The Flow Box Theorem \cite{Lee}*{Thm.\ 2.91} guarantees the existence of a suitable $a>0$ and a neighborhood of $x$ (which we immediately restrict to our chart neighborhood on $\mathcal{S}_{\tau(x)}$ and again denote by $V$) such that the map
 \begin{align*}
  \phi \colon (-a,a) \times \varphi(V) &\to M, \\ (t,y) &\mapsto c_y(t),
\end{align*}
is well-defined. By the Inverse Function Theorem, $\phi$ is a local diffeomorphism, hence one obtains a coordinate system $(t,y)$ on a neighborhood $U$ of $x$ in $M$. At any point $z = (t,y) \in \phi^{-1}(U)\subseteq (-a,a) \times \real^N$, the differential $D_{z}\phi \colon \real \times \real^N \to T_{\phi(z)}M$ maps $(1,0,\ldots,0)$ to $\dot c_y(t) = \nabla \tau |_{\phi(z)}$. Thanks to \eqref{eq:firstcootau}, $D_{z}\phi$ also maps $\{0\} \times \real^N$ to $T_{\phi(z)} \mathcal{S}_{\tau(\phi(z))} \subseteq T_{\phi(z)}M$. Since $T_{\phi(z)} \mathcal{S}_{\tau(\phi(z))}= (\operatorname{span}\nabla\tau|_{\phi(z)})^\perp$, pulling back the metric tensor $g$ with $\phi$, we see that its components in the $(t,y)$ coordinates are
\begin{align}\label{eq:gorth}
g_{00} =-1 && g_{0 i} = 0 \ \text{ for all } \ i \neq 0,
\end{align}
where the subindex $0$ corresponds to the $t$-component and the subindex $i$ to any of the $y$-components. Finally, by further shrinking $U$, we may assume that it is $g_W$-convex.

\medskip
\textbf{Step 2. Construction of piecewise causal curves $\beta_n$ approximating $d_W(p,q)$.}
Let $p,q \in U$ be given. By assumption on $U$, there exists a length-minimizing $g_W$-geodesic $\gamma \colon [0,L] \to U$ from $p$ to $q$, parametrized by arclength. In particular, \[L = L_W(\gamma) = d_W(p,q).\] We write $\gamma = (\gamma^\tau,\gamma^1,...,\gamma^N)$ to denote the coordinate expression of $\gamma$. Consider the sequence $(\beta_n)_n$ of curves given in coordinates by
\begin{equation} \label{eq:betan}
    \beta_n(s) := \left(\gamma^\tau(s) + 3 f_n(s), \gamma^1(s),...,\gamma^N(s)\right),
\end{equation}
for $|f_n|$ sufficiently small so that $\beta_n$ is contained in $U$. Furthermore, we choose $(f_n)_n$ in a way that $\beta_n$ is piecewise causal, that is,
\[
 g(\dot\beta_n,\dot\beta_n) \leq 0 \quad \text{almost everywhere},
\]
and so that $\nullL_\tau(\beta_n) \leq 4 L_W(\gamma)$ for $n$ sufficiently large.

We show that the functions $f_n \colon [0,L] \to \real$, given by
\begin{align*}
 f_0(s) &:= \begin{cases}
            s & \text{for } s \in [0,\frac{L}{2}], \\
            L-s & \text{for } s \in [\frac{L}{2},L],
           \end{cases} \quad \text{and}  \\
 f_n(s) &:= \frac{1}{2} \begin{cases}
                        f_{n-1}(2s) & \text{for } s \in [0,\frac{L}{2}], \\ 
                        f_{n-1}(2s-L) & \text{for } s \in [\frac{L}{2},L],
                    \end{cases}
\end{align*}
as depicted in Figure \ref{fig:wick2a} satisfy all these properties. The curves $\beta_n$ given by \eqref{eq:betan} are shown in Figure \ref{fig:wick2b}. By definition, $\dot\beta_n$ exists almost everywhere, and by \eqref{eq:gorth} it follows that
\begin{align}\label{gbetan1}
 g(\dot\beta_n,\dot\beta_n) &= - |\dot\beta_n^\tau|^2 + \g(\dot\beta_n,\dot\beta_n) \nonumber \\
 &=-|\dot\gamma^\tau_n+3f_n'|^2 + (\g_{ij}\circ\beta_n) \dot\gamma^i \dot\gamma^j.
\end{align}
Since $| f_n'|=1$ almost everywhere, and $|\dot\gamma^\tau_n| \leq g_W(\dot\gamma,\dot\gamma)=1$, it follows that almost everywhere
\begin{align}\label{eq:2first}
 2 \leq 3|f_n'| - |\dot\gamma^\tau_n| \leq |\dot\gamma^\tau_n+3f_n'| \leq 3|f_n'| + |\dot\gamma^\tau_n| \leq 4.
\end{align}
We consider the second term in \eqref{gbetan1}. Since $\gamma$ is a $g_W$-geodesic it is smooth and so are all coordinate components $\dot\gamma^i$ of its tangent vector. Due to compactness of $\gamma$ the $\dot\gamma^i$ must therefore also be bounded, say $|\dot\gamma^i| \leq M$ for some $M>0$. Due to $|f_n(s)| \leq \frac{L}{2^{n+1}} \to 0$ uniformly as $n\to\infty$ it follows that $\beta_n^\tau \to \gamma_n^\tau$ uniformly, and thus $\beta_n \to \gamma$ uniformly in coordinates. The coordinate functions $\g_{ij}\circ\beta_n \to \g_{ij}\circ\gamma$ converge uniformly on $[0,L]$ as $n\to\infty$ as well.
Thus for any $\varepsilon \in (0,\frac{1}{M^2})$ there is an $n_0 \in \nat$ such that for all $n\geq n_0$, for all $s \in [0,L]$,
\begin{align}\label{eq:2second}
 \g_{ij}&(\beta_n(s)) \dot\gamma^i(s) \dot\gamma^j(s) \nonumber \\
 &\leq \g_{ij}(\gamma(s)) \dot\gamma^i(s) \dot\gamma^j(s) + |\g_{ij}(\beta_n(s)) - \g_{ij}(\gamma(s))| |\dot\gamma^i(s)| |\dot\gamma^j(s)| \nonumber \\
 & \leq g_W(\dot\gamma(s),\dot\gamma(s)) + \varepsilon M^2 \leq 2.
\end{align}
Together, \eqref{gbetan1}--\eqref{eq:2second} imply 
\[
 g(\dot\beta_n,\dot\beta_n) \leq -2^2 + 2 \leq 0,
\]
hence all $\beta_n$ for $n\geq n_0$ are piecewise causal.

Using again that \eqref{eq:2first} holds almost everywhere, it follows that for all $n$
\begin{equation*}
    \nullL_\tau (\beta_n) = \int_0^L | (\tau \circ \beta_n)'(s) |ds = \int_0^L | \dot\gamma^\tau(s) + 3f'_n(s) |ds \leq 4 L =4L_W(\gamma),
\end{equation*}
and since $\gamma$ is a length-minimizing $g_W$-geodesic we conclude that
\begin{equation*}
    \nulld_\tau(p,q) \leq 4L = 4 d_W(p,q). \qedhere
\end{equation*}
\end{proof}
 
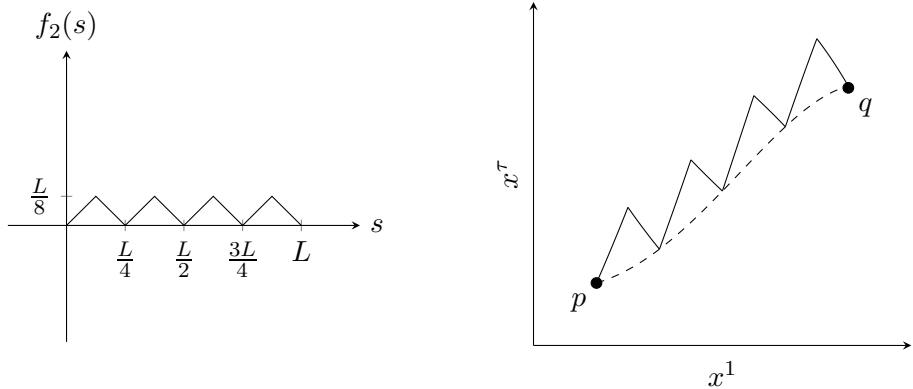
\begin{figure}
    \centering
    \begin{subfigure}{0.5\textwidth}
    \begin{tikzpicture}
    \begin{axis}[xmin=-1,xmax=5,
                 ymin=-2.9,ymax=3,
                 axis x line=middle,
                 axis y line=middle,
                 axis equal image,
                 xlabel={$s$},
                 ylabel={$f_2(s)$},
                 every axis x label/.style={at={(ticklabel* cs:1)},anchor=west,},
                 every axis y label/.style={at={(ticklabel* cs:1)},anchor=south,},
                 xtick={0, 1, 2, 3, 4},
                 xticklabels ={$0$,$\frac{L}{4}$,$\frac{L}{2}$,$\frac{3L}{4}$,$L$},
                 ytick=0.5,
                 yticklabels={$\frac{L}{8}$},
                 scale=0.8]
        \draw foreach \k in {1, ..., 4} {(axis cs: {\k-1},0) -- (axis cs: {\k-0.5},0.5) -- (axis cs: {\k},0)};
        \draw[color=white,very thick] (axis cs: 0,-2) -- (axis cs: 0, -2.9);
    \end{axis}
    \end{tikzpicture}
    \caption{The function $f_2$.}
    \label{fig:wick2a}
    \end{subfigure}%
    \begin{subfigure}{0.5\textwidth}
    \begin{tikzpicture}
    \begin{axis}[xmin=-1,xmax=5,
                 ymin=-1,ymax=4.5,
                 axis x line=bottom,
                 axis y line=left,
                 xlabel={$x^1$},
                 ylabel={$x^\tau$},
                 xtick=\empty,
                 ytick=\empty,
                 axis equal image,
                 ylabel near ticks,
                 xlabel near ticks,
                 scale=0.8]
        \addplot[domain=0:4,samples=100,style=dashed]{0.3*x+0.25*x^2-0.008*x^4};
        \foreach \k in {1, ..., 4} \addplot[domain={\k-1}:{\k-0.5},samples=10]{0.3*x+0.25*x^2-0.008*x^4+2*(x-\k+1)};
        \foreach \k in {1, ..., 4} \addplot[domain={\k-0.5}:{\k},samples=10]{0.3*x+0.25*x^2-0.008*x^4-2*(x-\k)};
        \filldraw[black] (axis cs: 0,0) circle (2pt) node[anchor = north east] {$p$};
        \filldraw[black] (axis cs: 4,3.125) circle (2pt) node[anchor = north west] {$q$};
    \end{axis}
    \end{tikzpicture}
    \caption{The curves $\gamma$ (dashed) and $\beta_2$ (solid).}
    \label{fig:wick2b}
    \end{subfigure}
    \caption{Illustration of the proof of Lemma \ref{lem:wick2}.}
\end{figure}


\subsection{Proof of Theorem~\ref{thm:nonsmooth}}\label{ssec:equiv}

In Section~\ref{ssec:wick} we have already shown that the bi-Lipschitz estimates~\eqref{eq:equivnonsmooth} of Theorem~\ref{thm:nonsmooth} hold locally if we choose $\tau$ to be a temporal function and $h=g_W$ to be the corresponding Wick-rotated Riemannian metric. We first extend these local results to weak temporal functions and arbitrary Riemannian metrics, and then prove Theorem~\ref{thm:nonsmooth} on compact sets.

\begin{lem}\label{lem:localbiLip}
 Let $(M,g)$ be a spacetime equipped with a weak temporal function $\tau$. Suppose $h$ is a Riemannian metric on $M$. Then, for every $x \in M$ there exists a neighborhood $U$ of $x$ and a constant $C\geq 1$ such that for all $p,q \in U$
 \begin{align}\label{eq:dhdtaulocal}
  \frac{1}{C} d_h(p,q) \leq \nulld_\tau(p,q) \leq C d_h(p,q).
 \end{align}
\end{lem}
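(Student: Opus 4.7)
My plan is to reduce to the case of a smooth temporal function and then bridge via the Riemannian metric $h$. Since an admissible time function implies stable causality, Bernal--S\'anchez furnishes a smooth temporal function $\tilde\tau$ on $M$, and after the conformal rescaling of Section~\ref{sec:wick} we may assume $g = -d\tilde\tau^2 + \bar g$. Lemmas~\ref{lem:wick} and \ref{lem:wick2} then give a neighborhood $U_0$ of $x$ on which $\tfrac{1}{\sqrt{2}} d_W \leq \nulld_{\tilde\tau} \leq 4\, d_W$, and the Riemannian bi-Lipschitz comparison \cite[Thm.\ 4.5]{Bur} lets one replace $d_W$ by $d_h$ on a precompact subneighborhood; this establishes \eqref{eq:dhdtaulocal} for $\tilde\tau$ in place of $\tau$.

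Next I fix a precompactly contained open neighborhood $U_1$ of $x$ with $\bar U_1 \subseteq U_0$, and apply a Lebesgue number argument to finite subcovers of $\bar U_1$ by admissibility neighborhoods (in the sense of Definition~\ref{LaLdef}) for both $\tau$ and $\tilde\tau$: this produces a single radius $\delta > 0$ and constants $C_\tau, C_{\tilde\tau} \geq 1$ such that for every causally related pair $p' \leq q'$ in $\bar U_1$ with $d_h(p',q') < \delta$, the two-sided admissibility inequalities hold simultaneously for both time functions with these constants. For the \emph{upper} bound on $\nulld_\tau$, I take $U \subseteq U_1$ small enough that the approximating piecewise causal curves $\beta_n$ furnished by Lemma~\ref{lem:wick2} applied to $\tilde\tau$ remain in $U_1$, and refine each partition so that every causal piece has $d_h$-diameter less than $\delta$. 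Chaining the admissibility inequalities for $\tau$ and $\tilde\tau$ on each piece yields
\[
 |\tau(\beta_n(s_{i+1})) - \tau(\beta_n(s_i))| \leq C_\tau\, d_h(\beta_n(s_i),\beta_n(s_{i+1})) \leq C_\tau C_{\tilde\tau}\, |\tilde\tau(\beta_n(s_{i+1})) - \tilde\tau(\beta_n(s_i))|,
\]
so summing over $i$ gives $\nullL_\tau(\beta_n) \leq C_\tau C_{\tilde\tau}\, \nullL_{\tilde\tau}(\beta_n)$, and combining with the estimate from the first paragraph produces $\nulld_\tau(p,q) \leq C\, d_h(p,q)$.

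For the \emph{lower} bound I shrink $U$ further so that $\operatorname{diam}_h(U) \leq \eta := d_h(\bar U, \partial U_1) > 0$. For any piecewise causal $\beta$ from $p$ to $q$ in $U$, two cases arise. If $\beta$ stays inside $\bar U_1$, then refining its partition so each piece has $d_h$-diameter less than $\delta$ and applying the anti-Lipschitz direction of admissibility together with the triangle inequality gives $\nullL_\tau(\beta) \geq \tfrac{1}{C_\tau} d_h(p,q)$. If $\beta$ leaves $\bar U_1$, the same argument applied to its subpath from $p$ to the first exit point alone contributes at least $\tfrac{\eta}{C_\tau} \geq \tfrac{1}{C_\tau} d_h(p,q)$ to $\nullL_\tau(\beta)$. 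Taking the infimum over $\beta$ completes the proof.

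The main obstacle will be the upper bound: a merely locally Lipschitz admissible $\tau$ admits no intrinsic construction of approximating piecewise causal curves, so the curves must be borrowed from the smooth auxiliary $\tilde\tau$ via Lemma~\ref{lem:wick2} and the length estimate transported back to $\tau$ through the common Riemannian yardstick $h$. Everything else, including the Lebesgue number uniformisation and the lower-bound case analysis, is then a routine compactness argument.
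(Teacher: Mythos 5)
Your proof is correct and takes essentially the same route as the paper: pass to a smooth auxiliary temporal function $\tilde\tau$, use the Wick-rotated metric $d_W$ together with Lemmas~\ref{lem:wick} and \ref{lem:wick2}, transfer to the given admissible $\tau$ via the two-sided bound of Definition~\ref{LaLdef}, and switch Riemannian yardsticks with Burtscher's local bi-Lipschitz theorem. The paper's version is slightly leaner: it sidesteps both your Lebesgue-number uniformisation and the ``curve leaves $\bar U_1$'' case split by taking the final neighborhood to be a null-distance ball $\hat B^\tau_r(x)$ with $\hat B^\tau_{3r}(x)$ contained inside a single admissibility neighborhood (so near-minimizing sequences stay inside automatically), and for the upper bound it bounds $\nullL_\tau(\beta) \leq C\, L_W(\beta)$ directly by applying the Lipschitz half of admissibility of $\tau$ piecewise to the curve from Lemma~\ref{lem:wick2}, rather than chaining the $\tau$-Lipschitz and $\tilde\tau$-anti-Lipschitz inequalities.
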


\begin{proof}
 Let $x \in M$. We consider both inequalities in \eqref{eq:dhdtaulocal} separately for different Riemannian metrics and choose $U$ to be intersection of the neighborhoods $V_1$ and $V_2$ derived in each step. More precisely, the first inequality follows from the local anti-Lipschitz assumption for $\tau$ in Definition~\ref{LaLdef}, and the second inequality from Lemma~\ref{lem:wick2} for an auxiliary temporal functions. Without loss of generality we furthermore assume that $U$ is relatively compact. Then by a result of Burtscher~\cite{Bur}*{Thm.\ 4.5} the bi-Lipschitz estimate extends to any Riemannian metric.
 
 \medskip
 \textbf{Step 1. Lower bound}. We show that $\frac{1}{C} d_h(p,q) \leq \nulld_\tau(p,q)$ for $C$ and $h$ as in Definition~\ref{LaLdef}.
 
 Let $U_1$ be the neighborhood of $x$ from Definition~\ref{LaLdef}. Since $\nulld_\tau$ induces the manifold topology there is a radius $r>0$ such that for the open ball $\hat B_{3r}^\tau(x) \subseteq U_1$. Consider $p,q \in V_1 := \hat B_{r}^\tau(x)$. There exists a sequence $(\beta_n)_n$ of piecewise causal curves in $M$ between $p$ and $q$ such that
 \[
  \nullL_\tau(\beta_n) \leq \nulld_\tau(p,q) + \frac{1}{n}.
 \]
 For all $n> \frac{1}{r}$ the curves $\beta_n$ cannot leave $U_1$. Consider one such $\beta_n \colon [0,1]\to U_1$ and its partition $0 = s_0 < s_1 < \ldots < s_{k-1} < s_k =1$ in causal pieces. Then for $C$ and $h$ as in Definition~\ref{LaLdef}
 \begin{align*}
  \frac{1}{C} d_h(p,q)
  &\leq \frac{1}{C} \sum_{i=1}^k d_h (\beta_n(s_i),\beta_n(s_{i-1})) \\
  &\leq \sum_{i=1}^k \nulld_\tau (\beta_n(s_i),\beta_n(s_{i-1}))
  = \nullL_\tau(\beta_n) \leq \nulld_\tau(p,q) + \frac{1}{n}.
 \end{align*}
 Thus the result follows as $n\to\infty$.
 
 \medskip
 \textbf{Step 2. Upper bound.} We show that $\nulld_\tau(p,q) \leq 5 C d_W(p,q)$ for $g_W$ the Riemannian metric \eqref{gW} with respect to an auxiliary temporal function and corresponding $C$ of Definition~\ref{LaLdef}.
 
 Every spacetime which admits a time function also admits a temporal function $\tilde\tau$ by Bernal and S\'anchez~\cite{BeSa}*{Thm.\ 1.2}. We consider the Wick-rotated Riemannian metric $g_W$ defined in \eqref{gW} with respect to such a fixed $\tilde\tau$. Let $V_2$ be a geodesically convex neighborhood of $x$ with respect to  $g_W$. Without loss of generality we assume that $V_2$ is contained in the neighborhoods $U_2$ of Definition~\ref{LaLdef} and Lemma~\ref{lem:wick2}. Suppose $p,q \in V_2$ and $\gamma$ is the $g_W$-length minimizing geodesic $\gamma$ between $p$ and $q$ in $V_2$. In the proof of Lemma~\ref{lem:wick2} a piecewise causal curve $\beta$ in $V_2$, sufficiently close to $\gamma$, was constructed for which
 \begin{align}\label{eq:LW}
  L_W(\beta) &= \int_0^L \| \dot\beta(s) \|_{W} ds = \int_0^L \sqrt{|\dot\beta^{\tilde\tau}|^2 + \g(\dot\beta,\dot\beta)} ds \nonumber \\
  & \leq \int_0^L \sqrt{4^2 + 2} ds < 5 L = 5 L_W(\gamma).
 \end{align}
 Since $\beta$ is piecewise causal, there is a partition $0 = s_0 < s_1 < \ldots < s_{k-1} < s_k = L$ such that
 \[
  \nullL_\tau(\beta) = \sum_{i=1}^k \nulld_\tau(\beta(s_i),\beta(s_{i-1})),
 \]
 and $\tau$ being a weak temporal function implies that there is a constant $C\geq1$ (Definition~\ref{LaLdef}) such that
 \[
  \nullL_\tau(\beta) \leq C \sum_{i=1}^k d_W(\beta(s_i),\beta(s_{i-1})) \leq C L_W(\beta).
 \]
 Together with \eqref{eq:LW} and the fact that $\gamma$ is $g_W$-length minimizing we thus obtain
 \[
  \nulld_\tau(p,q) \leq \nullL_\tau(\beta) \leq C L_W(\beta) \leq 5C L_W(\gamma) = 5C d_W(p,q). \qedhere
 \]
\end{proof}

Lemma~\ref{lem:localbiLip} shows that the original local bi-Lipschitz bound \eqref{LaL} for causality related points (as required for weak temporal functions in Definition~\ref{LaLdef}) extends to the bi-Lipschitz bound \eqref{eq:dhdtaulocal} on a whole neighborhood of each point. Finally, we adapt the proof of Burtscher \cite{Bur}*{Thm.\ 4.5} to extend this local estimate to compact sets.

\begin{proof}[Proof of Theorem~\ref{thm:nonsmooth}] \label{proof:nonsmooth}
 Let $K$ be a compact subset of $M$. The argument proceeds by contradiction. Suppose that for any $n \in \nat$, there exist points $p_n , q_n \in K$ such that
 \begin{align}\label{ineq12}
  \nulld_{\tau}(p_n,q_n) > n d_h(p_n, q_n).
 \end{align}
 By passing to subsequences, we can assume that $p_n \to p$ and $q_n \to q$ (in one and hence both metrics, since they both induce the manifold topology). Since $K$ is compact it is bounded with respect to the null distance $\nulld_{\tau}$ and the inequality~\eqref{ineq12} furthermore implies that
 \[
  d_h(p_n, q_n) \to 0 \quad\text{as} \quad n \to \infty,  
 \]
 and hence $p=q$. Thus, for $n$ large enough, $p_n, q_n$ are contained in neighborhood $U$ of $p$ that by Lemma~\ref{lem:localbiLip} is small enough so that \eqref{eq:dhdtaulocal} holds for some $\tilde{C} \geq 1$. But then we conclude that
 \[
  \tilde{C} d_h(p_n,q_n) \geq \nulld_{\tau}(p_n,q_n) > n d_h(p_n,q_n),
 \]
 a contradiction for $n$ large. This proves that there is a constant $C \geq 1$ such that $\nulld_\tau(p,q) \leq C d_h(p,q)$, and the reverse inequality is obtained in the same way.
\end{proof}

\begin{rem}
 Bi-Lipschitz maps are a crucial tool in Metric Geometry. In particular, they are useful in the study of Gromov--Hausdorff convergence of metric spaces. Moreover, the estimates of Theorem~\ref{thm:nonsmooth} can be used to equip suitable spacetimes with a (local) integral current space structure (see \cites{SoWe,AmKi,JaLe,La,LaWe,FeFl}) via the approach laid out in Allen and Burtscher~\cite{AlBu}*{Sec.\ 2.5, 2.6, and 4}. Thanks to our Completeness Theorem~\ref{mainthm} and \cite{AlBu}*{Thm.\ 1.3} we know, for instance, that any globally hyperbolic spacetime viewed as the metric space $(M,\nulld_\tau)$ with completely uniform temporal function $\tau$ is a (local) integral current space (see Section~\ref{sec:complete}). This also makes it possible to use spacetime intrinsic flat convergence to study geometric stability questions in General Relativity, as proposed by Sormani.
\end{rem}


\subsection{Direct proof of Corollary~\ref{equivthm1}}\label{ssec:altequivtauproof}

Corollary~\ref{equivthm1} is a very special case of Theorem~\ref{thm:nonsmooth}. Here we show that it can also be obtained directly without the use of temporal functions. We first prove a local result.

\begin{lem} \label{equivlemma}
 Let $\tau_1$, $\tau_2$ be two weak temporal functions (see Definition~\ref{LaLdef}) on a spacetime $(M,g)$, and $\hat d_{\tau_1}$, $\hat d_{\tau_2}$ their associated null distances. Then, for every point $x \in M$, there exists a neighborhood $U$ of $x$ and a constant $C \geq 1$ such that for all $p,q \in U$,
 \[
  \frac{1}{C} \hat d_{\tau_1}(p,q) \leq \hat d_{\tau_2}(p,q) \leq C \hat d_{\tau_1}(p,q).
 \]
\end{lem}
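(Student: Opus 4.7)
My plan is to chain the admissibility bounds for $\tau_1$ and $\tau_2$ against a common Riemannian distance, apply the resulting pointwise estimate piece by piece to piecewise causal paths, and then pass from null lengths to null distances by a containment argument that keeps near-minimizing paths inside the neighborhood where the local bounds are valid.

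First, I would fix $x \in M$ and apply Definition~\ref{LaLdef} to both $\tau_1$ and $\tau_2$ on a common neighborhood $V$ of $x$ with respect to a single Riemannian metric $h$, which is possible after intersecting the two neighborhoods from admissibility and invoking \cite[Thm.\ 4.5]{Bur} to switch to a common $h$ at the cost of enlarging the constant. On $V$, every causally related pair $p \leq q$ then satisfies $\frac{1}{C_0} d_h(p,q) \leq \tau_i(q) - \tau_i(p) \leq C_0 d_h(p,q)$ uniformly in $i \in \{1,2\}$ for some $C_0 \geq 1$, so chaining yields $|\tau_1(q)-\tau_1(p)| \leq C_0^2 |\tau_2(q)-\tau_2(p)|$ and its symmetric counterpart. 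Summing over the causal pieces of any piecewise causal curve $\beta\colon [0,1]\to V$ joining $p$ and $q$ immediately produces the null length comparison $\nullL_{\tau_1}(\beta) \leq C_0^2\, \nullL_{\tau_2}(\beta)$ and conversely.

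To promote this length estimate to the null distance, I would use the fact that for admissible time functions $\nulld_\tau$ induces the manifold topology by \cite[Thm.\ 4.6]{SoVe}. Shrinking $V$ if necessary, pick $r > 0$ so that the open null balls satisfy $\hat B^{\tau_1}_{3r}(x) \cup \hat B^{\tau_2}_{3r}(x) \subseteq V$, and set $U := \hat B^{\tau_1}_r(x) \cap \hat B^{\tau_2}_r(x)$. For $p, q \in U$ the triangle inequality gives $\nulld_{\tau_i}(p,q) < 2r$, so any near-minimizing piecewise causal path $\beta_n$ with $\nullL_{\tau_i}(\beta_n) \leq \nulld_{\tau_i}(p,q) + 1/n$ eventually stays inside $\hat B^{\tau_i}_{3r}(x) \subseteq V$: otherwise it would meet a point $z$ with $\nulld_{\tau_i}(x,z) \geq 3r$, and then $\nullL_{\tau_i}(\beta_n) \geq \nulld_{\tau_i}(p,z) + \nulld_{\tau_i}(z,q) \geq 4r$, contradicting $\nullL_{\tau_i}(\beta_n) < 3r$ for $n$ large. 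Once $\beta_n \subseteq V$, the length comparison from the previous paragraph applies, and letting $n \to \infty$ produces $\nulld_{\tau_j}(p,q) \leq C_0^2\, \nulld_{\tau_i}(p,q)$ for both orderings $(i,j)$, giving the desired bi-Lipschitz bound with constant $C := C_0^2$.

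The main obstacle is precisely this localization step: admissibility is a pointwise-local hypothesis, but the infimum defining $\nulld_\tau$ is taken over piecewise causal curves that might a priori exit $V$ entirely. The containment trick, built on the fact that $(M,\nulld_\tau)$ carries the manifold topology, is what allows the argument to go through directly, without any appeal to temporal functions, Wick-rotated metrics, or the orthogonal decomposition used in Section~\ref{sec:wick}.
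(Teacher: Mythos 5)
Your proposal is correct and follows essentially the same route as the paper's direct proof of Lemma~\ref{equivlemma}: chain the admissibility bounds against a common Riemannian distance to get a bi-Lipschitz relation between $\tau_1$- and $\tau_2$-increments along causal segments, then use the fact that $\nulld_{\tau_i}$ induces the manifold topology to trap near-minimizing piecewise causal paths inside a ball contained in the common neighborhood $V$, and pass from null lengths to null distances in the limit. The paper uses a $4r$-ball and a slightly more direct containment estimate (bounding $\nulld_{\tau_i}(x,\beta(t))$ along the curve rather than arguing by contradiction at a boundary point), but these are cosmetic differences and the substance of the argument matches.
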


\begin{proof}
 Fix a point $x \in M$ and an arbitrary Riemannian metric $h$ on $M$. Let $U_x^{\tau_1}$ and $U_x^{\tau_2}$ be the neighborhoods around $x$ where both $\tau_1$ and $\tau_2$ are Lipschitz and anti-Lipschitz with positive constants $C_1,C_2$ with respect to $h$ as in \eqref{LaL} of Definition~\ref{LaLdef}. Since $M$ is locally compact there exists a relatively compact neighborhood $V \subseteq U_x^{\tau_1}\cap U_x^{\tau_2}$ of $x$. Since both null distances $\nulld_{\tau_1},\nulld_{\tau_2}$ induce the manifold topology \cite{SoVe}*{Thm.\ 4.6}, we find $r>0$ sufficiently small so that $\hat B_{4r}^{\tau_1}(x) \cup \hat B_{4r}^{\tau_2}(x) \subseteq V$. Define a neighborhood of $x$ by \[U := \hat B_{r}^{\tau_1}(x) \cap \hat B_{r}^{\tau_2}(x).\]

We claim that for $i=1,2$ and for all $p,q\in U$,
 \begin{align}\label{dtau1}
  \hat d_{\tau_i}(p,q) = \inf \{ \nullL_{\tau_i}(\beta) \mid \beta \text{ a p.w.\ causal curve in } V\text{ between } p \text{ and } q\}.
 \end{align}
 What we are saying is that the null distances in $U$ can be approximated by sequences of curves that never leave $V$. To prove this, let $p,q \in U$, consider an arbitrary $\varepsilon \in (0,r)$, and for each $i = 1,2$ choose a piecewise causal curve $\beta_\varepsilon^i \colon I \to M$ from $p$ to $q$ such that
 \[
  \nullL_{\tau_i}(\beta_\varepsilon^i) < \nulld_{\tau_i}(p,q) + \varepsilon.
 \]
 Then, for all $t \in I$,
 \begin{align*}
  \nulld_{\tau_i}(x,\beta^i_\varepsilon(t)) &\leq \nulld_{\tau_i}(x,p) + \nulld_{\tau_i}(p, \beta^i_\varepsilon (t)) \leq \nulld_{\tau_i}(x,p) + \nullL_{\tau_i}(\beta^i_\varepsilon)\\ &\leq \nulld_{\tau_i}(x,p) + \nulld_{\tau_i}(p,q) + \varepsilon < 4r,
 \end{align*}
 hence $\beta^i_\varepsilon$ lies entirely in $\hat B^{\tau_i}_{4r}(x) \subseteq V$, proving the claim \eqref{dtau1} for $i=1,2$.
 
 For any causally related points $(\tilde p,\tilde q) \in \J$ in $ U_x^{\tau_1} \cap U_x^{\tau_2}$, by \eqref{LaL}, we have for $C := C_1 C_2\geq 1$ that
 \begin{equation} \label{equivkeyeq}
  \frac{1}{C} (\tau_1(\tilde q) - \tau_1(\tilde p)) \leq \tau_2(\tilde q) - \tau_2(\tilde p) \leq {C} (\tau_1(\tilde q) - \tau_1(\tilde p)).
 \end{equation}
 This implies that for all piecewise causal curves $\beta$ contained in $U_x^{\tau_1} \cap U_x^{\tau_2}$,
 \begin{align}\label{eqC}
  \frac{1}{C} \nullL_{\tau_1} (\beta) \leq \nullL_{\tau_2}(\beta) \leq {C} \nullL_{\tau_1}(\beta).
 \end{align}
 It remains to be shown that the bi-Lipschitz bounds on the null lengths extends to the null distances between any two points $p$ and $q$ in the smaller set $U$. We have already seen that for each $n \in \nat$ exists a piecewise causal curve $\beta^1_n$ between $p$ and $q$ (by \eqref{dtau1} $\beta^1_n$ is entirely contained in $V$!) such that
 \[
  \nullL_{\tau_1}(\beta^1_n) \leq \nulld_{\tau_1}(p,q) + \frac{1}{n},
 \]
 and by \eqref{eqC} therefore
 \[
  \nulld_{\tau_2} (p,q) \leq \nullL_{\tau_2}(\beta^1_n) \leq C \nullL_{\tau_1}(\beta^1_n) \leq C \nulld_{\tau_1} (p,q) + \frac{C}{n}.
 \]
 Since this inequality holds for all $n$ and fixed $C$, we have that $\nulld_{\tau_2}(p,q) \leq C \nulld_{\tau_1}(p,q)$ on $U$.
 In the same way, for piecewise causal curves $\beta^2_n$ approximating $\nulld_{\tau_2}(p,q)$, we obtain $\nulld_{\tau_1}(p,q) \leq C \nulld_{\tau_2}(p,q)$ on $U$, which proves the Lemma.
\end{proof}

\begin{proof}[Proof of Corollary~\ref{equivthm1}]
 Having established the local result in Lemma~\ref{equivlemma}, the proof on compact sets is now the same as that of Theorem~\ref{thm:nonsmooth}.
\end{proof}


\subsection{Counterexamples}
\label{ssec:equivcount}

Finally, we construct two examples showing that the assumptions and statement of Corollary~\ref{equivthm1}, and hence also that of Theorem~\ref{thm:nonsmooth} and Corollary~\ref{cor:differentg}, are sharp.

\begin{ex}[Corollary~\ref{equivthm1} is false globally]\label{ex:globalnotequiv}
 Consider the Minkowski spacetime $M=\real^{1,n}$ with the temporal functions $\tau_1 = t$ and $\tau_2 = \exp(t)$ (which even have the same level sets!). Since the exponential function is \textit{not} globally Lipschitz, the time functions $\tau_1$ and $\tau_2$ are not equivalent in the sense that the bi-Lipschitz condition \eqref{equivkeyeq} for causally related points does not hold globally. Since $\tau(q) - \tau(p) = \nulld_\tau(p,q)$ for all $q \in J^+(p)$, it follows that if \eqref{equivkeyeq} fails to hold globally, then the null distances $\nulld_{\tau_1}$, $\nulld_{\tau_2}$ are globally inequivalent.
\end{ex}

Example~\ref{ex:globalnotequiv} shows that although $\tau_1$ and $\tau_2$ are both temporal functions on the \emph{same} spacetime $(M,g)$, there is a clear distinction \emph{globally} between the metric structures induced by the null distances $\nulld_{\tau_1}$ and $\nulld_{\tau_2}$. In particular, in Section \ref{sec:complete} we show that $(M,\nulld_{\tau_1})$ is complete while $(M,\nulld_{\tau_2})$ is not.

\begin{ex}[Corollary~\ref{equivthm1} is false for time functions that are not locally Lipschitz] \label{ex:nonlocLip}
 Consider the Minkowski spacetime $M = \real^{1,n}$ with respect to the usual global coordinates $(t,x)$. Let $\tau_1(t,x):=t$ be the standard (smooth and locally anti-Lipschitz) time function and \[ \tau_2(t,x):=\operatorname{sgn}(t) \sqrt{|t|}. \] Clearly, $\tau_2$ is continuous and a time function. It is, however, \emph{not} locally Lipschitz at $t=0$ because $\partial_t \tau_2(t,x) \to \infty$ as $t \to 0$. Nonetheless, $\tau_2$ is locally anti-Lipschitz (and therefore $\hat d_{\tau_2}$ induces the manifold topology): Consider a point $(t,x)$ with $t>0$ and the neighborhood $U = (0,2t) \times \real^n$. Suppose $q \in J^+(p)$ with $p=(t_p,x_p)$, $q=(t_q,x_q)$. By the Mean Value Theorem for $\sqrt{\phantom{.}}$ on $[0,2t]$ we have
 \begin{align*}
  \sqrt{t_q}-\sqrt{t_p} \geq \left( \inf_{r\in(0,2t)} \frac{d}{dr}\sqrt{r} \right) (t_q-t_p) = \frac{1}{2\sqrt{2t}} |t_q-t_p|.
 \end{align*}
 Hence due to the causal relation on Minkowski spacetime we obtain with respect to the Euclidean distance $d$ and with $C := \frac{1}{4\sqrt{2t}}>0$ that on $U$
 \[
  q \in J^+(p) \Longrightarrow \tau_2(q) - \tau_2(p) \geq \frac{1}{4\sqrt{2t}} \sqrt{(t_q-t_p)^2 + |x_q-x_p|^2} \geq  C d(q,p).
 \]
 In the same fashion, $\tau_2$ is anti-Lipschitz in the neighborhood $U = (2t,0) \times\real$ of a point $(t,x)$ with $t<0$. If $t=0$, we can simply use the neighborhood $U = (-1,1) \times \real^n$ and $C=\frac{1}{4}$ (since the infimum is at $|r| =1$). Hence $\tau_2$ is locally anti-Lipschitz everywhere (but clearly not globally anti-Lipschitz with respect to the Euclidean distance).

 Now assume that $p=(0,0)$ and let $q_t=(t,0)$, $t \in (0,1)$, be arbitrarily close to $p$. Then
 \[
  \hat d_{\tau_1}(p,q_t) = t \leq \sqrt{t} = \hat d_{\tau_2} (p,q_t)
 \]
 but the bi-Lipschitz estimate \eqref{eq:equivnonsmooth} of Theorem~\ref{equivthm1} does not hold because
 \[
  \frac{\hat d_{\tau_2}(p,q_t)}{\hat d_{\tau_1}(p,q_t)} = \frac{1}{\sqrt{t}} \to \infty \quad \text{ as } t \to 0.
 \]
\end{ex}


\subsection{Basic properties of weak temporal functions}\label{ssec:gradient}

The following result justifies the notion of weak temporal functions.

\begin{prop} \label{prop:timelikegrad}
 Let $(M,g)$ be a spacetime and $\tau \colon M\to\real$ be a weak temporal function. Then $\tau$ is locally Lipschitz with past-directed timelike gradient $\nabla\tau$ almost everywhere.
\end{prop}

\begin{proof}
 The local Lipschitz condition of Definition~\ref{LaLdef} implies the upper bound in Lemma~\ref{lem:localbiLip}, i.e., for each Riemannian metric $h$ and each point of $M$ there is an open neighborhood $U$ and $C\geq 1$ such that for $p,q\in U$
 \[
  |\tau(q)-\tau(p)| \leq \nulld_\tau(p,q) \leq C d_h(p,q).
 \]
 In other words, $\tau$ is locally Lipschitz on $M$. By Rademacher's Theorem $\nabla\tau$ therefore exits almost everywhere.
 
 Suppose that $\nabla\tau$ exists at point $p$. Consider a future-directed causal vector $v \in T_pM \setminus \{0\}$ and a smooth causal curve $\gamma$ from $p =\gamma(0)$ in direction $v$. Then
 \begin{align*}
  d\tau(v) = \left. \frac{d}{ds} \right|_{s=0} (\tau\circ\gamma)(s) = \lim_{s\to 0} \frac{\tau(\gamma(s))-\tau(p)}{s} = \lim_{s\to 0} \frac{\nulld_\tau(\gamma(0),\gamma(s))}{s},
 \end{align*}
 which by the local anti-Lipschitz property of $\tau$ and \cite{Bur}*{Prop.\ 4.10} implies that
 \[
  d\tau(v) \geq \frac{1}{C} \lim_{s\to 0} \frac{d_h(\gamma(0),\gamma(s))}{s} = \frac{1}{C} \| v\|_h >0.
 \]
 Hence $\nabla\tau$ is past-directed timelike whenever it exists.
\end{proof}

The converse is not true, in the sense that a function with almost everywhere timelike gradient is not necessarily weak temporal, unless one assumes local upper and lower bounds on $\Vert \nabla \tau \Vert_h$. Indeed, the time functions in Example~\ref{ex:nonlocLip} and \cite{SoVe}*{Ex.\ 3.4} have timelike gradient almost everywhere, but the first is not locally Lipschitz and the second not locally anti-Lipschitz.

\medskip
The following important classes of time functions are weak temporal.

\begin{lem}\label{temporalareLaL}
 Temporal functions and regular cosmological time functions (\`{a} la Andersson--Galloway--Howard~\cite{AGH} and Wald--Yip~\cite{WaYi}) are weak temporal functions.
\end{lem}
 
\begin{proof}
 For temporal functions, locally Lipschitz follows by smoothness and locally anti-Lipschitz by both \cite{SoVe}*{Cor.\ 4.16} and \cite{CGM}*{Prop.\ 4.3} (together with Galloway's observation of compatibility \cite{SoVe}*{p.\ 19}). For regular cosmological time functions, locally Lipschitz follows by \cite{AGH}*{Thm.\ 1.2(v)} and locally anti-Lipschitz by \cite{SoVe}*{Thm.\ 5.4}.
\end{proof}


\section{Encoding causality}\label{sec:causality}

In this section we prove Theorem~\ref{thm:admcausalityglobal} and a corresponding local result.
 By Definition \ref{def:nulld} of the null distance for any $p,q \in M$
\[
 \nulld_\tau(p,q) \geq |\tau(q)-\tau(p)|.
\]
Equality holds trivially for causally related points, i.e.,
\[(p,q) \in J^+ \Longrightarrow \nulld_\tau(p,q) = \tau(q) -\tau(p).\]
where $\J \subseteq M \times M$ denotes the causal relation
\[\J := \{(p,q) \mid \text{there exists a future-directed causal curve from $p$ to $q$}\}.\] 
We investigate when the converse holds, that is, when
\begin{equation} \label{eq:causenc}
  (p,q) \in \J \Longleftrightarrow \nulld_\tau(p,q) = \tau(q) - \tau(p),
\end{equation}
in which case the null distance is said to \emph{encode causality}, an open problem mentioned in \cite{SoVe}*{Sec.\ 1}. We introduce a new notation for the right hand side of \eqref{eq:causenc}.

\begin{defn}
 Let $(M,g)$ be a spacetime equipped with a time function $\tau \colon M \to \real$. We define the \emph{null distance relation} $\nullR_\tau \subseteq M \times M$ by
 \[
  (p,q) \in \nullR_\tau :\iff \nulld_\tau(p,q) = \tau(q) - \tau(p).
 \]
\end{defn}

Clearly, the relation $\nullR_\tau$ is reflexive and transitive. Antisymmetry requires definiteness of $\nulld_\tau$ which, for instance, follows if $\tau$ is a locally anti-Lipschitz time function~\cite{SoVe}*{Thm.\ 4.6}. Continuity of $\tau$ and $\nulld_\tau$ imply closedness of $\nullR_\tau$. We can thus summarize the basic properties of $\nullR_\tau$ as follows.

\begin{lem}\label{lem:Rprop}
 Suppose $(M,g)$ is a spacetime and $\tau$ is a locally anti-Lipschitz time function on $M$. Then the null distance relation $\nullR_\tau$ is a closed partial order on $M$ satisfying $\J \subseteq \nullR_\tau$. \qed
\end{lem}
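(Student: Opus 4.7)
The lemma packages three elementary order-theoretic properties plus the inclusion $\J \subseteq \nullR_\tau$, so my plan is to establish each in turn, relying throughout on a single key observation that I would state first.

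The central observation is a reverse triangle inequality: for every piecewise causal curve $\beta$ from $p$ to $q$,
\[
\nullL_\tau(\beta) \;=\; \sum_{i=1}^k \bigl|\tau(\beta(s_i)) - \tau(\beta(s_{i-1}))\bigr| \;\geq\; \Bigl|\sum_{i=1}^k \bigl(\tau(\beta(s_i)) - \tau(\beta(s_{i-1}))\bigr)\Bigr| \;=\; |\tau(q)-\tau(p)|,
\]
so taking the infimum gives $\nulld_\tau(p,q) \geq |\tau(q)-\tau(p)|$ for all $p,q\in M$. This is the workhorse inequality.

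With this in hand, reflexivity $(p,p)\in\nullR_\tau$ is immediate since $\nulld_\tau(p,p)=0=\tau(p)-\tau(p)$. For transitivity, assume $(p,q),(q,r)\in\nullR_\tau$; nonnegativity of $\nulld_\tau$ gives $\tau(p)\leq\tau(q)\leq\tau(r)$, the ordinary triangle inequality yields $\nulld_\tau(p,r)\leq\nulld_\tau(p,q)+\nulld_\tau(q,r)=\tau(r)-\tau(p)$, while the reverse triangle inequality above provides the matching lower bound $\nulld_\tau(p,r)\geq\tau(r)-\tau(p)$. For antisymmetry, if $(p,q)$ and $(q,p)$ both lie in $\nullR_\tau$, then $\tau(q)-\tau(p)=\nulld_\tau(p,q)=\nulld_\tau(q,p)=\tau(p)-\tau(q)$, forcing $\nulld_\tau(p,q)=0$; here I invoke the local anti-Lipschitz hypothesis, which by \cite[Thm.\ 4.6]{SoVe} guarantees definiteness of $\nulld_\tau$, and conclude $p=q$.

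Finally, for $\J\subseteq\nullR_\tau$, let $q\in J^+(p)$ and let $\gamma$ be a future-directed causal curve from $p$ to $q$. Viewed as a (one-piece) piecewise causal path, $\nullL_\tau(\gamma)=|\tau(q)-\tau(p)|=\tau(q)-\tau(p)$ because $\tau$ is a time function, so $\nulld_\tau(p,q)\leq\tau(q)-\tau(p)$; combined once more with the reverse triangle inequality this gives equality, i.e.\ $(p,q)\in\nullR_\tau$. No step here is a real obstacle — the only place requiring a hypothesis beyond the definitions is antisymmetry, where the local anti-Lipschitz condition is used exactly to upgrade the pseudo-distance $\nulld_\tau$ to a genuine metric.
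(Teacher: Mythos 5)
Your proposal is correct and follows essentially the same route as the paper: the paper leaves reflexivity, transitivity, and $\J \subseteq \nullR_\tau$ as immediate (noting just before the lemma that the null distance is achieved for causal curves) and invokes definiteness from Sormani--Vega \cite[Thm.\ 4.6]{SoVe} under the local anti-Lipschitz hypothesis for antisymmetry, which is exactly what you do. The one thing you add is the explicit reverse-triangle bound $\nulld_\tau(p,q)\geq|\tau(q)-\tau(p)|$, a clean way to organize the verifications that the paper treats as obvious.
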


In Section~\ref{ssec:causalityglobal} we prove that globally hyperbolic spacetimes encode causality when $\tau$ is carefully chosen, which can be reformulated in terms of $\nullR_\tau$ as follows.

\begin{thm}\label{thm:causalityglobal}
 Let $(M,g)$ be a globally hyperbolic spacetime and let $\tau$ be a temporal function such that every nonempty level set is a Cauchy surface. Then causality is encoded in $\nulld_\tau$, meaning $\J = \nullR_\tau$.
\end{thm}

Note that the time functions $\tau \colon M \to (T_1,T_2)$ of Theorems~\ref{thm:admcausalityglobal} and \ref{thm:causalityglobal} are Cauchy time functions (as per Theorem~\ref{thm:Cauchytime}) up to composition with an increasing homeomorphism $f \colon (T_1,T_2) \to \real$. Our relaxed Cauchy assumption adds value in the weak context, notably because cosmological time functions take values only on $(0,\infty)$.

Subsequently we also prove a local version of this result for any spacetime with a temporal function. 

\begin{thm}\label{thm:causalitylocal}
 Let $(M,g)$ be a spacetime and $\tau$ be a temporal function. Then, every point $x\in M$ has a neighborhood $U$ such that 
 \begin{align*}
     J^+ \cap (U \times U) = \nullR_\tau \cap (U \times U). 
 \end{align*}
\end{thm}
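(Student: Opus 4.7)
The inclusion $\J\cap (U\times U)\subseteq \nullR_\tau\cap (U\times U)$ is immediate for any neighborhood $U$: a future-directed causal curve from $p$ to $q$ has null length $\tau(q)-\tau(p)$, and every piecewise causal curve between two points has null length at least the absolute $\tau$-difference of its endpoints. My plan for the reverse inclusion is to exploit the flow-box coordinates constructed in Step~1 of the proof of Lemma~\ref{lem:wick2}, in which (after the conformal rescaling \eqref{g}) the metric has the form $g=-dt^2+\g$, and to extract a future-directed causal curve as the uniform limit of a minimizing sequence of piecewise causal curves. I choose $U$ to be a relatively compact, $g_W$-convex coordinate neighborhood of $x$, sitting inside a larger chart $U'$, and shrunk so that, exactly as in Lemma~\ref{equivlemma}, every piecewise causal curve between points of $U$ with almost-minimal null length is contained in $U'$.

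Suppose $\nulld_\tau(p,q)=\tau(q)-\tau(p)$. The case $\tau(p)=\tau(q)$ gives $\nulld_\tau(p,q)=0$ and hence $p=q$ by local anti-Lipschitzness; otherwise set $\Delta t:=\tau(q)-\tau(p)>0$ and fix piecewise causal curves $\beta_n\colon[0,1]\to U'$ from $p$ to $q$ with $\nullL_\tau(\beta_n)\to\Delta t$, reparametrized proportionally to $g_W$-arclength. Writing $\beta_n=(\beta_n^t,\beta_n^y)$ in coordinates, let $F_n$ and $P_n$ denote the positive and negative variations of $\beta_n^t$, so that $F_n-P_n=\Delta t$ and $F_n+P_n=\nullL_\tau(\beta_n)$. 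The key observation is $P_n\to 0$: the past-directed content of $\beta_n$ vanishes in the limit. By Lemma~\ref{lem:wick} the $g_W$-lengths are uniformly bounded, so the $\beta_n$ are uniformly Lipschitz with respect to $g_W$, and Arzel\`a--Ascoli supplies a uniform limit $\gamma\colon[0,1]\to\overline{U'}$ with $\gamma(0)=p$, $\gamma(1)=q$. Absolute continuity of $\gamma^t$ together with the estimate $\mathrm{TV}(\gamma^t)\leq\liminf_n(F_n+P_n)=\Delta t=\gamma^t(1)-\gamma^t(0)$ forces $\dot\gamma^t\geq 0$ almost everywhere.

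The main obstacle is to upgrade this to pointwise causality $\|\dot\gamma^y\|_{\g}\leq\dot\gamma^t$ a.e., which I plan to handle at the level of Radon measures on $[0,1]$. Set $\mu_n:=\|\dot\beta_n^y\|_{\g}\,ds$ and $\nu_n:=\dot\beta_n^t\,ds$; piecewise causality of $\beta_n$ gives $\mu_n\leq|\nu_n|$ as measures. Since the negative part $\nu_n^-$ has total mass $P_n\to 0$, it vanishes in the weak-$*$ limit, so the weak-$*$ limit $\nu$ of any convergent subsequence of $(\nu_n)$ is positive, and integration by parts against test functions (using $\beta_n^t\to\gamma^t$ uniformly and absolute continuity of $\gamma^t$) identifies $\nu$ with the measure $\dot\gamma^t\,ds$. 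Passing $\mu_n\leq|\nu_n|$ to the weak-$*$ limit, any accumulation point $\mu$ of $(\mu_n)$ satisfies $\mu\leq\nu=\dot\gamma^t\,ds$. On the other hand, standard lower semicontinuity of the weighted spatial length functional under uniform convergence of Lipschitz curves (combined with uniform convergence of the continuous family $\g$ evaluated along $\beta_n^t$ to $\g$ along $\gamma^t$) gives $\|\dot\gamma^y\|_{\g}\,ds\leq\mu$. Comparing the two inequalities at the level of Lebesgue densities yields $\|\dot\gamma^y\|_{\g}\leq\dot\gamma^t$ almost everywhere, so $\gamma$ is a future-directed causal curve and $q\in J^+(p)$.
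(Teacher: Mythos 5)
Your proposal is correct, and it follows the same overall skeleton as the paper (flow-box coordinates as in Step~1 of Lemma~\ref{lem:wick2}, a minimizing sequence $(\beta_n)$ confined to a relatively compact set, Arzel\`a--Ascoli limit $\gamma$, and then causality of $\gamma$); but the way you establish causality of the limit is genuinely different from the paper's. The paper first derives, from the global minimizing property \eqref{eq:globminimizing}, a localized version \eqref{eq:locminimizing} valid on every subinterval of the domain of $\beta_n$; it then parametrizes $\beta_n$ by $g_W$-arclength, and a difference-quotient computation using \eqref{eq:locminimizing} together with Lemma~\ref{lem:wick} yields the pointwise lower bound $\dot\beta^\tau(s_0)\geq 1/\sqrt{2}$ almost everywhere. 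Combined with $\|\dot\beta\|_W\leq 1$ (from the arclength parametrization), the Wick-rotation identity $g(\dot\beta,\dot\beta)=-2|\dot\beta^\tau|^2+\|\dot\beta\|_W^2$ gives causality and strict future-directedness at once. You instead work globally: you only use the global minimizing property to show the negative variation $P_n\to 0$, deduce $\dot\gamma^t\geq 0$ from lower semicontinuity of total variation, and then obtain $\|\dot\gamma^y\|_{\bar g}\leq\dot\gamma^t$ by passing $\mu_n\leq|\nu_n|$ through the weak-$*$ limit and invoking lower semicontinuity of a length functional with a point-dependent weight. This requires more machinery than the paper's elementary difference-quotient argument (weak-$*$ compactness of Radon measures, Portmanteau, a freezing argument for the varying $\bar g$-weight), but it cleanly avoids deriving the subinterval estimate \eqref{eq:locminimizing}, and your choice of proportional parametrization on the fixed domain $[0,1]$ trivializes the ``limit reaches $q$'' step (the paper extends the $\beta_n$ to a common interval and argues separately). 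One small gap to close: your argument only yields $\dot\gamma^t\geq 0$, not $>0$, so $\dot\gamma$ could vanish on a positive-measure set; you should note that wherever $\dot\gamma^t=0$ the inequality $\|\dot\gamma^y\|_{\bar g}\leq\dot\gamma^t$ forces $\dot\gamma=0$, so that a $g_W$-arclength reparametrization (possible since $\gamma^t(1)-\gamma^t(0)=\Delta t>0$ guarantees $\gamma$ is not constant) produces a curve with future-directed causal tangent a.e., which is what the convention for causal curves requires.
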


Upon completion of an earlier preprint of this manuscript and that of Sakovich and Sormani~\cite{SaSo} we noticed that our different proofs can be combined to yield a stronger result of both Theorem~\ref{thm:causalityglobal} (to locally anti-Lipschitz $\tau$) and \cite{SaSo} (to noncompact Cauchy slices). This stronger result is Theorem~\ref{thm:admcausalityglobal} and proven in Section~\ref{ssec:admcausalityglobal}.

In Section~\ref{ssec:causalitycount} we show that the assumptions for the global Theorem~\ref{thm:admcausalityglobal} are sharp in the sense that there are globally hyperbolic spacetimes and locally anti-Lipschitz functions $\tau$ for which $\J \subsetneq \nullR_\tau$.


\subsection{Proofs of Theorems~\ref{thm:causalityglobal} and \ref{thm:causalitylocal} for temporal functions}
\label{ssec:causalityglobal}
 
In this section, the Wick-rotated metric $g_W$ of $g$ introduced in Section \ref{ssec:wick} plays an important role again. Since $g_W$ need not be complete even if the spacetime is globally hyperbolic (see Example \ref{ex:notcomplete}) we furthermore make use of the existence of a conformally equivalent complete Riemannian metric
 \begin{align}\label{gRgW}
 g_R := \Omega^2 g_W,
 \end{align}
 where $\Omega \colon M \to [1,\infty)$ is a smooth function \cite{NoOz}*{Thm.~1}. We denote the corresponding norm, length functional and distance by $\left\Vert\cdot\right\Vert_R$, $L_R$ and $d_R$, respectively. The relation of $g_R$ and $g_W$ carries over to the distances as follows, that is, for any curve $\gamma \colon [a,b] \to M$,
 \[
  L_W(\gamma) \leq L_R(\gamma),
 \]
 and hence for any $p,q \in M$
 \[
  d_W(p,q) \leq d_R(p,q).
 \]
In the following lemma, we obtain reverse inequalities on compact sets.
 
\begin{lem}\label{lem:complete}
  Let $g_R$ and $g_W$ be two conformally equivalent Riemannian metrics on $M$ as in \eqref{gRgW}, and suppose $K$ is a compact set in $M$. Then there exist constants $C_K \geq c_K \geq 1$ such that for any curve $\gamma$ in $K$
 \begin{align*}
  L_W(\gamma) \leq L_R(\gamma) \leq c_K L_W(\gamma).
 \end{align*}
 and for any $p,q \in K$,
 \[
  d_W(p,q) \leq d_R(p,q) \leq C_K d_W(p,q).
 \]
\end{lem}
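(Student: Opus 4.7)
The plan is to extract both length bounds from the pointwise conformal relation $\Vert\cdot\Vert_R = \Omega\Vert\cdot\Vert_W$, and then to deduce the two distance bounds from them. Since $\Omega \geq 1$ everywhere, $\Vert v\Vert_R \geq \Vert v\Vert_W$ pointwise, which gives $L_W(\gamma) \leq L_R(\gamma)$ for every curve, and, taking infima over connecting curves, also $d_W(p,q) \leq d_R(p,q)$ globally. For the reverse length bound on curves in $K$, continuity of $\Omega$ and compactness of $K$ yield $c_K := \max_K \Omega < \infty$, with $c_K \geq 1$ since $\Omega \geq 1$; then $L_R(\gamma) = \int \Omega(\gamma(s))\Vert\dot\gamma(s)\Vert_W\, ds \leq c_K L_W(\gamma)$ whenever $\gamma$ stays in $K$.

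The main obstacle is the upper distance bound $d_R(p,q) \leq C_K d_W(p,q)$: the infimum defining $d_W(p,q)$ is taken over \emph{all} curves from $p$ to $q$, which need not remain in $K$, so the pointwise length estimate does not translate directly. My strategy is to localize using a slightly larger compact neighborhood. I would fix a relatively compact open $V$ with $K \subseteq V$, and set $\delta := d_W(K, M\setminus V)$, which is strictly positive because $K$ is compact, $M \setminus V$ is closed and disjoint from $K$, and $d_W$ induces the manifold topology. I also put $c := \max_{\overline V}\Omega < \infty$.

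For $p,q \in K$ I would split into two cases. If $d_W(p,q) < \delta/2$, then any curve $\gamma$ from $p$ with $L_W(\gamma) < \delta$ cannot exit $\overline V$, so $L_R(\gamma) \leq c\, L_W(\gamma)$; approximating $d_W(p,q)$ by such curves yields $d_R(p,q) \leq c\, d_W(p,q)$. If instead $d_W(p,q) \geq \delta/2$, I use that $d_R$ is continuous on the compact set $K\times K$ (as $g_R$ is a Riemannian metric inducing the manifold topology), so $D := \max_{K\times K} d_R < \infty$, and hence $d_R(p,q) \leq D \leq (2D/\delta)\,d_W(p,q)$. Taking $C_K := \max(c,\, 2D/\delta,\, c_K)$ gives a single constant satisfying both distance bounds together with the required ordering $C_K \geq c_K \geq 1$.
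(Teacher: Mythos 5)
Your proof is correct, and it takes a different route from the paper's on the only nontrivial point. The paper dispatches the estimate $d_R \leq C_K\,d_W$ by citing Burtscher's bi-Lipschitz theorem for Riemannian distances on compact sets (\cite{Bur}*{Thm.~4.5}), whose underlying argument is a compactness-by-contradiction one in the style of the paper's own proof of Theorem~\ref{thm:nonsmooth}. You instead give a direct, constructive argument: enlarge $K$ to a relatively compact $V$, observe $\delta := d_W(K, M\setminus V) > 0$ (positive because $d_W$ induces the manifold topology and $K$ is compact), set $c := \max_{\overline V}\Omega$, and then split according to whether $d_W(p,q)$ is below or above $\delta/2$. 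In the first case near-minimizing $g_W$-curves are trapped in $\overline V$, so the pointwise bound applies; in the second case you bound $d_R$ by its maximum on $K\times K$. What your approach buys is a self-contained, explicit constant $C_K = \max(c,\,2D/\delta)$ and no reliance on external machinery; the paper's approach is shorter but opaque about where $C_K$ comes from. One small point worth keeping in mind: if $M$ is compact you may end up with $V = M$ and $\delta = +\infty$, but then Case~1 always applies and the argument degenerates gracefully, so nothing breaks. Also note your final $C_K$ automatically satisfies $C_K \geq c \geq c_K \geq 1$ since $K \subseteq \overline V$, so the required ordering in the statement is met.
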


\begin{proof}
  The first statement about the lengths follows immediately from the Definition \eqref{gRgW}, and we may pick $c_K := \max_{x \in K}\Omega(x)$. The first inequality for the distances is trivial and the second inequality follows from \cite{Bur}*{Thm.~4.5} (note that, in general, we need to pick $C_K > c_K$ as minimizing curves may leave $K$, but the proof in \cite{Bur} guarantees boundedness of $C_K$).
\end{proof}

With these tools, we proceed to prove the theorem.
 
\begin{proof}[Proof of Theorem~\ref{thm:causalityglobal}]
 Trivially, $\J \subseteq \nullR_\tau$, so we only need to prove $\nullR_\tau \subseteq \J$. Suppose that $(p,q) \in \nullR_\tau$, that is, $\nulld_\tau(p,q) = \tau(q)-\tau(p)$. Since $(p,p) \in \J$ is trivially satisfied, we can assume that $p \neq q$ and thus necessarily $\tau(q) > \tau(p)$. By definition of $\nulld_\tau$ there exists a sequence of piecewise causal curves $(\beta_n)_n$ such that
 \begin{align}\label{eq:globminimizing}
  0 < \tau(q) - \tau(p) \leq \nullL_\tau(\beta_n) \leq \tau(q) - \tau(p) + \frac{1}{n}.
 \end{align}

 In what follows we construct a future-directed causal curve $\beta$ between $p$ and $q$, which then immediately implies $(p,q) \in \J$. We proceed as follows: After a preliminary local estimate we construct a candidate limit curve near $p$ in $(M,g_R)$, and then show that it is both locally Lipschitz and future-directed causal. Finally, we show it naturally extends all the way up to $q$.
 
 \medskip
 \textbf{Step 1. A local version of \eqref{eq:globminimizing}.}
We consider $\beta_n \colon [0,L_n] \to M$ on arbitrary subintervals of its domain.
Suppose there exists a subinterval $[s,t] \subseteq [0,L_n]$ such that $\nullL_\tau(\beta_n|_{[s,t]}) > \tau(\beta_n(t))-\tau(\beta_n(s)) + \frac{1}{n}$. Then by the additivity of null lengths
\begin{align*}
 \nullL_\tau(\beta_n) &= \nullL_\tau(\beta_n|_{[0,s]}) + \nullL_\tau(\beta_n|_{[s,t]}) + \nullL_\tau(\beta_n|_{[t,L_n]}) \\
 &> |\tau(\beta_n(s))-\tau(p)| + | \tau(\beta_n(t))-\tau(\beta_n(s))  + \frac{1}{n} | + |\tau(q)-\tau(\beta_n(t))| \\
 &\geq \tau(q)-\tau(p) +\frac{1}{n},
\end{align*}
a contradiction to \eqref{eq:globminimizing}. Thus for all subintervals $[s,t]$ of the domain of $\beta_n$
\begin{align}\label{eq:locminimizing}
|\tau(\beta_n(t)) -\tau(\beta_n(s))| \leq \nullL_\tau(\beta_n|_{[s,t]}) \leq \tau(\beta_n(t)) - \tau(\beta_n(s)) + \frac{1}{n}.
\end{align}
 (Note that at this point we cannot yet rule out that $\tau(\beta_n(t)) - \tau(\beta_n(s))< 0$, which is why we do not want to drop the absolute value on the left hand side.)

 \medskip
 \textbf{Step 2. Construction of a candidate limit curve $\beta$ near $p$.}
 Suppose, that each $\beta_n \colon [0,L_n] \to M$ is parametrized by $g_R$-arclength with $\beta_n(0)=p$ and $\beta_n(L_n)=q$. In addition, we attach a future-directed inextendible causal curve $\tilde\beta$ at $q$. Since $g_R$ is complete, $\tilde\beta$ must have infinite $g_R$-length.
 Thus we can extend all $\beta_n$ by $\tilde\beta$ to obtain future-inextendible piecewise causal curves with $g_R$-arclength parametrization, again denoted by $\beta_n \colon [0,\infty) \to M$ and satisfying \eqref{eq:locminimizing}. In particular, for each $n$ and any $s,t \in [0,\infty)$,
 \begin{align}\label{eq:Lip}
  d_R(\beta_n(s),\beta_n(t)) \leq L_R(\beta_n|_{[s,t]}) = |t-s|.
 \end{align}
 
 By construction, $\beta_n (0)=p$ for all $n$. Fix any other $t_0 \in (0,\infty)$. Then, due to the $g_R$-arclength parametrization, each curve segment $\beta_n|_{[0,t_0]}$ is contained in the bounded set (which, by the Hopf--Rinow Theorem for $g_R$ is also compact)
 \[
  B_{t_0}(p) := \{ x \in M : d_R(p,x) \leq t_0 \}.
 \]
 In particular, the family $(\beta_n|_{[0,t_0]})_n$ is uniformly bounded and uniformly equicontinuous. Thus, by the Arzel\`{a}--Ascoli Theorem (see, for instance, \cite{Mun}*{Thm.\ 47.1} or \cite{BEE}*{Thm.\ 3.30}), there exists a continuous curve $\beta \colon [0,\infty) \to M$, such that a subsequence of $(\beta_{n})_n$ converges uniformly to $\beta$ on all compact subintervals. Since $\frac{1}{n_k}\leq \frac{1}{k}$ for any subsequence, we can denote this subsequence again by $(\beta_n)_n$. Moreover, \eqref{eq:Lip} implies that $\beta$ is a locally Lipschitz curve with
 \begin{align}\label{eq:Lip1}
  d_R(\beta(s),\beta(t)) \leq |t-s|
 \end{align}
 for all $s,t \in [0,\infty)$.

\medskip
\textbf{Step 3. The curve $\beta$ is future-directed causal.} Since $\beta$ is locally Lipschitz, together with Rademacher's Theorem, we know that $\dot\beta$ exists almost everywhere. To conclude that $\beta$ is a future-directed causal curve, it remains to be shown that $g(\dot \beta,\dot \beta) \leq 0$ and $(\tau \circ \beta)'>0$ almost everywhere.

By \cite{Bur}*{Prop.\ 4.10} the $g_R$-norm of the analytic derivative and $d_R$-metric derivative of $\beta$ exist and coincide almost everywhere. Combined with the Lipschitz estimate~\eqref{eq:Lip1} for $\beta$ we thus have for almost all $s \in [0,\infty)$
\begin{align}\label{gwbound}
 0 \leq \| \dot\beta(s) \|_R = \lim_{h \to 0} \frac{d_R(\beta(s+h),\beta(s))}{|h|} \leq \lim_{h\to 0} \frac{|h|}{|h|} = 1.
\end{align}
In order to show that $\dot\beta$ is almost everywhere causal we need to control the $\tau$-component $\dot\beta^\tau = (\tau \circ \beta)'$ of the tangent vectors. Suppose $\dot\beta(s_0)$ exists for a fixed $s_0 \in (0,\infty)$, and consider the closed (and hence compact) $d_R$-ball $B_\varepsilon$ of radius $\varepsilon$ at $\beta(s_0)$ in $M$. By Lipschitz continuity \eqref{eq:Lip1} of $\beta$ the whole interval $[s_0-\varepsilon/2,s_0+\varepsilon/2]$, is mapped into $B_{\varepsilon/2}$. We use the approximating sequence $(\beta_n)_n$ of $\beta$ next, more precisely, that the $\beta_n$ are piecewise causal and converge uniformly on compact intervals. Thus for sufficiently large $n$ all $\beta_n([s_0-\varepsilon/2,s_0+\varepsilon/2]) \subseteq B_\varepsilon$. By the local estimate \eqref{eq:locminimizing}
\begin{align}\label{Ltn}
 \lim_{n\to\infty} \nullL_\tau(\beta_n|_{[s_0-\varepsilon/2,s_0+\varepsilon/2]}) = \tau(\beta(s_0+\varepsilon/2)) - \tau(\beta(s_0-\varepsilon/2)).
\end{align}
On the other hand, due to the $g_R$-arclength parametrization of $\beta_n$ and Lemmas~\ref{lem:wick} and \ref{lem:complete} (with constant $c_\varepsilon = \max_{x \in B_\varepsilon} \Omega(x)\geq 1$)
\begin{align*}
 \varepsilon = L_R(\beta_n|_{[s_0-\varepsilon/2,s_0+\varepsilon/2]}) &\leq c_\varepsilon L_W(\beta_n|_{[s_0-\varepsilon/2,s_0+\varepsilon/2]}) \\ &\leq \sqrt{2} c_\varepsilon \nullL_\tau(\beta_n|_{[s_0-\varepsilon/2,s_0+\varepsilon/2]}).
\end{align*}
Hence by \eqref{Ltn}
\[
 \frac{1}{\sqrt{2} c_\varepsilon} \leq \frac{1}{\varepsilon} \lim_{n\to\infty} \nullL_\tau(\beta_n|_{[s_0-\varepsilon/2,s_0+\varepsilon/2]}) = \frac{\tau(\beta(s_0+\varepsilon/2)) - \tau(\beta(s_0-\varepsilon/2))}{\varepsilon}.
\]
Due to the continuity of $\Omega$, it follows that $c_\varepsilon \to \Omega(\beta(s_0))$ as $\varepsilon \to 0$, while the difference quotient of $\tau \circ \beta$ converges to the derivative $\dot\beta^\tau(s_0) = (\tau \circ \beta)'(s_0)$. Thus in the limit we obtain \[\dot\beta^\tau(s_0) \geq \frac{1}{\sqrt{2} \Omega(\beta(s_0))} > 0.\]
Due to the Wick-rotation $g_W$ of $g$ as well as the $g_R$-bound \eqref{gwbound}
\begin{align*}
 g(\dot \beta(s_0),\dot \beta(s_0)) &= - 2|\dot\beta^\tau(s_0)|^2 + \|\dot\beta(s_0)\|^2_W \\ &\leq - \Omega^{-2}(\beta(s_0)) + \Omega^{-2}(\beta(s_0)) \| \dot\beta(s_0) \|^2_R \leq 0.
\end{align*}
This proves that $\dot\beta$ is future-directed causal almost everywhere.

\medskip
\textbf{Step 4. The point $q$ lies on $\beta$.}
By construction $\beta(0) = p$, and by Step 3 we know that $\tau\circ\beta$ is strictly increasing on $[0,\infty)$. We distinguish two cases:
\begin{enumerate}
 \item If there is an $s_0 \in [0,\infty)$ such that $\tau(\beta(s_0)) = \tau(q)$, then the following argument implies that $\beta(s_0) = q$:
 
Let $\varepsilon >0$ be arbitrary. Since both $\nulld_\tau$ and $d_R$ induce the manifold topology, there exists a $\delta>0$ such that $d_R(\beta(s_0),x) < \delta$ implies $\nulld_\tau(\beta(s_0),x) < \varepsilon$. Due to the convergence $\beta_{n}(s_0) \to \beta(s_0)$ (obtained with respect to $d_R$ in Step 2), for any $n$ sufficiently large
 \begin{align}\label{eq:s0est1}
  \nulld_\tau(\beta(s_0),\beta_n(s_0)) < \varepsilon.
 \end{align}
Moreover, due to the continuity of $\tau \circ \beta$, for all $n > \frac{1}{\varepsilon}$ sufficiently large
 \[| \tau(\beta_{n}(s_0)) - \tau(q)| = | \tau(\beta_{n}(s_0)) - \tau(\beta(s_0))| < \varepsilon.\]
Since $\beta_{n}(L_{n}) = q$, the local estimate \eqref{eq:locminimizing} on $[s_0,L_{n}]$ (or $[L_n,s_0]$ if $L_n < s_0$) yields
 \begin{align}\label{eq:soest2}
   \nulld_\tau(\beta_{n}(s_0),q) &\leq \nullL_\tau(\beta_{n}|_{[s_0,L_n]}) \nonumber \\
    &\leq \tau(q) - \tau(\beta_n(s_0)) + \frac{1}{n} < 2 \varepsilon.
 \end{align}
Combining \eqref{eq:s0est1} and \eqref{eq:soest2} implies that $\nulld_\tau(\beta(s_0),q) < 3 \varepsilon$ for any $\varepsilon >0$. Thus $\beta(s_0) = q$.

\item The only obstruction to the desired conclusion is therefore that for all $s \in [0,\infty)$
\begin{align}\label{sq}
 \tau(\beta(s)) < \tau(q).                  
 \end{align}
We show that this case cannot occur. Since the level set $\tau^{-1}(q)$ is a Cauchy surface and $\tau$ increases along $\beta$, \eqref{sq} implies that the causal curve $\beta$ is future extendible as a future-directed causal curve. In particular, the future endpoint $x := \lim_{s \to \infty} \beta(s)$ of $\beta$ exists (since it is necessarily part of any extension) and by \eqref{sq}, $\tau(x) \leq \tau(q)$. If $x=q$ we are done, otherwise there exists a relatively compact open set $W$ around $\beta \cup \{x\}$ such that
$q \not\in \overline{W}$.

We will use the approximating curves $\beta_n$ to show that $\beta$ must in fact leave $W$: Since $\beta_n(0)=p \in W$ and $\beta_n(L_n) = q \not\in W$ there exists
\[
 b_n := \sup\{ s \mid \beta_n(t) \in W \text{ for all } t \in [0,s] \} \in (0,L_n),
\]
and $\beta_n(b_n) \in \partial W$. Since $\overline{W}$ is compact, by Lemmas~\ref{lem:wick} and \ref{lem:complete}, there exists a constant $C>0$ such that
\begin{align*}
 b_n = L_R(\beta_n|_{[0,b_n]}) &\leq \sqrt{2} C \nullL_\tau(\beta_n|_{[0,b_n]}) \leq \sqrt{2} C \nullL_\tau(\beta_n|_{[0,L_n]})\\
 &\leq \sqrt{2} C \left( \tau(q) - \tau(p) + \frac{1}{n} \right).
\end{align*}
Hence all $b_n$ are uniformly bounded from above by a constant $a := \sqrt{2} C \left( \tau(q) - \tau(p) + 1 \right)$. In particular, a subsequence converges to $b := \limsup b_n \in [ 0, a)$. Let $\varepsilon >0$. By uniform convergence $\beta_n \to \beta$ on $[0,a]$ for all sufficiently large $n$ (along the previous subsequence), we have
\begin{align}\label{eq:gb1}
 d_R(\beta(b_n),\beta_n(b_n)) < \varepsilon.
\end{align}
Since $\beta_n(b_n) \in \partial W$ and $\partial W$ is compact as closed subset of the compact set $\overline{W}$, there exists a subsequence of points $\beta_{n_k}(b_{n_k})$ that converges to a point $y \in \partial W$, i.e., for $k$ sufficiently large
\begin{align}\label{eq:gb2}
 d_R(\beta_{n_k}(b_{n_k}),y) < \varepsilon.
\end{align}
Combining \eqref{eq:gb1}--\eqref{eq:gb2} yields
\begin{align*}
 d_R(\beta(b),y) &\leq d_R(\beta(b),\beta_{n_k}(b_{n_k})) + d_R(\beta_{n_k}(b_{n_k}), y) < 2 \varepsilon.
\end{align*}
In other words, $\beta(b) = y \in \partial W$, a contradiction to the assumption that $\beta$ is entirely contained in the open set $W$. Hence the assumption \eqref{sq} must be false, and thus by case (i) $\beta$ indeed reaches $q$.
\end{enumerate}
To sum up, we have constructed a future-directed causal (Step 3) curve $\beta$ from $p$ (Step 2) to $q$ (Step 4). Therefore, $(p,q) \in \J$, and thus $\J = \nullR_\tau$.
\end{proof}

\begin{rem}[Future/Past Cauchy level sets]\label{rem:fcauchy}
 Inspecting the proof of Theorem~\ref{thm:causalityglobal} one observes that the assumption that the level sets of $\tau$ are Cauchy is only used in Step 4(ii). In fact, it is only needed that the level sets are \emph{past Cauchy} because we construct the limiting curve $\beta$ from $p$ to $q$ (and $\tau(p)<\tau(q)$). We could have equally well constructed the curve from $q$ in which case we would have used that the level sets are \emph{future Cauchy} (see definition in \cite{AGH}*{p.~315}). Since either case yields the desired result, $\tau$ having future (or past) Cauchy level sets is already sufficient for $\nulld_\tau$ to encode causality.
\end{rem}

We can adapt the proof of Theorem~\ref{thm:causalityglobal} to show its local counterpart Theorem~\ref{thm:causalitylocal}, which holds for every stably causal Lorentzian manifold (since every such manifold admits a smooth temporal function by \cite{Min4}*{Thm.\ 4.100}).

\begin{proof}[Proof of Theorem~\ref{thm:causalitylocal}]
 Let $x \in M$. Since $J^+ \subseteq \nullR_\tau$ is always true, the $\subseteq$ inclusion is trivial. In order to show $\supseteq$ we construct a suitable relatively compact open neighborhood $U$ of $x$ and construct causal curves locally similar to the proof of Theorem~\ref{thm:causalityglobal}. Only Step 2 made use of the complete Riemannian metric $g_R$ and Step 4 used global hyperbolicity of $(M,g)$ and have to be carried out slightly different.
 
 By local compactness of $M$ there is an $r>0$ sufficiently small such that $\hat B^\tau_{3r}(x)$ is relatively compact (the open ball with respect to $\nulld_\tau$ of radius $3r$). Consider $U := \hat B^\tau_{r}(x)$. Suppose now $p,q \in U$ and $(p,q) \in \nullR_\tau$, i.e.,
 \[
  \nulld_\tau(p,q) = \tau(q) - \tau(p) >0.
 \]
 By definition of $U$ and the triangle inequality also $\nulld_\tau(p,q) < 2r$. Let $(\beta_n)_n$ be a sequence of piecewise causal paths $\beta_n \colon [0,L_n] \to M$ that approximates $\nulld_\tau(p,q)$. Hence we may fix any $\varepsilon \in (0,\nulld_\tau(p,q))$ and assume without loss of generality that for all $n$,
 \begin{equation} \label{eq:nullLepsilon}
 \nullL_\tau(\beta_n) < \nulld_\tau(p,q) + \varepsilon < 2r.
 \end{equation}
In particular, all $\beta_n$ are contained in $\hat B^\tau_{3r}(x)$.
 
 Since $\tau$ is assumed to be temporal the Wick-rotated metric $g_W$ of $g$ exists (see Section \ref{ssec:wick}). We assume that the $\beta_n$ are parametrized by $g_W$-arclength, and therefore $L_n = L_W(\beta_n)$. By Lemma \ref{lem:wick} and \eqref{eq:nullLepsilon} we then obtain the following estimates:
 \begin{align}\label{eq:Ln}
  \nulld_\tau(p,q) \leq \nullL_\tau(\beta_n) \leq L_n \leq \sqrt{2} \nullL_\tau (\beta_n) \leq \sqrt{2} \left( \nulld_\tau(p,q) + \varepsilon \right).
 \end{align}
 It is more convenient to have all the $\beta_n$ defined on the same interval $[0,L]$, which we achieve by extending each $\beta_n$ as follows: Set $L := \sqrt{2} (\nulld_\tau(p,q) + \varepsilon)$. Then attach to each $\beta_n$ a future-directed causal curve $\tilde{\beta}_n$ starting at $q$ of $g_W$-length $\tilde{L}_n \leq L - L_n$. Notice that
 \[\tilde{L}_n \leq L - L_n < (\sqrt{2} -1)\nulld_\tau(p,q)+ \sqrt{2}\varepsilon < 2 (\sqrt{2} -1) r+ \sqrt{2}\varepsilon  < r \]
 for $\varepsilon$ small enough. Since $\nullL_\tau(\tilde{\beta}_n) \leq \tilde{L}_n$  (by Lemma \ref{lem:wick}) and $\tilde{\beta}_n$ starts at $q \in \hat{B}^\tau_r(x)$, it follows that $\tilde{\beta}_n$ is contained in $\hat B^\tau_{3r}(x)$. This also proves that indeed a long enough extension up to $\tilde{L}_n = L - L_n$ exists, given that $\hat B^\tau_{3r}(x)$ is relatively compact and the spacetime $(M,g)$ is non-totally imprisoning (because it admits a time function). We have thus obtained a sequence of piecewise causal curves, denoted again as $\beta_n \colon [0,L] \to \hat B^\tau_{3r}(x)$. The curves $\beta_n$ start at $p = \beta_n(0)$, reach $q = \beta_n(L_n)$, and then continue to their endpoint $\beta_n(L)$. The local estimate \eqref{eq:locminimizing} of Step 1 holds, too, because the extension is causal: If $s\leq t \leq L_n$ it follows as in the proof of Theorem \ref{thm:causalityglobal}. If $s \leq L_n \leq t \leq L$ it follows from \eqref{eq:locminimizing} on $[0,L_n]$ and future causality on $[L_n,L]$ via
 \begin{align*}
  \vert \tau(\beta_n(t)) - \tau(\beta_n(s)) \vert
   &\leq  \nullL_\tau(\beta_n|_{[s,t]}) = \nullL_\tau(\beta_n|_{[s,L_n]}) + \nullL_\tau(\beta_n|_{[L_n,t]})  \\ &= \nullL_\tau(\beta_n|_{[s,L_n]}) + \tau(\beta_n(t)) - \tau(\beta_n(L_n)) \\ &\leq \tau(\beta_n(t)) - \tau(\beta_n(s)) + \frac{1}{n}.
 \end{align*}
 If $L_n \leq s\leq t \leq L$ the inequality \eqref{eq:locminimizing} follows directly from the future causality of $\beta_n|_{[L_n,L]}$ even without error term $\frac{1}{n}$.
 
 Let $B := \hat B^\tau_{3r}(x)$. By the Arzel\`{a}--Ascoli Theorem applied on the compact metric space $(\overline{B},d_W|_{\overline{B}})$, there exists a subsequence, again denoted by $(\beta_n)_n$, that uniformly converges to a $1$-Lipschitz continuous limit curve $\beta \colon [0,L] \to \overline{B}$ as in Step 2 of the proof of Theorem~\ref{thm:causalitylocal}. 
 
 Step 3 in the proof of Theorem~\ref{thm:causalitylocal} can be used verbatim with $g_R = g_W$ (and $\Omega \equiv 1$) since completeness of $g_R$ was not needed here. Thus $\beta$ is future-directed causal.
 
 It remains to prove that $\beta$ reaches the point $q$, which is now easier thanks to the fact that $(\beta_n)_n$ converges to $\beta$ uniformly on $[0,L]$. Recall that $\beta_n(L_n) =q$ for all $n \in \nat$. By passing to a subsequence, if necessary, we may assume that $L_n \to L_\infty \leq L$. But then, given $\delta > 0$, for all $n$ large enough we obtain
 \begin{align*}
  d_W(& \beta(L_\infty),q) \\
  &\leq  d_W(\beta(L_\infty),\beta_n(L_\infty)) + d_W(\beta_n(L_\infty),\beta_n(L_n)) + d_W(\beta_n(L_n),q) \\ &\leq \delta + |L_\infty -L_n| + 0 < 2\delta,
 \end{align*}
 and therefore conclude that $\beta(L_\infty) = q$.

 Thus $\beta$ is indeed a future-directed causal curve from $p$ through $q$ and hence $(p,q) \in J^+$.
\end{proof}


\subsection{Alternative approaches and proof of Theorem~\ref{thm:admcausalityglobal}}
\label{ssec:admcausalityglobal}

In earlier work of Sormani and Vega the important class of warped product spacetimes $I \times_f \Sigma$ with interval $I \subseteq \real$ and complete Riemannian fibers $\Sigma$ was already shown to encode causality for certain temporal functions \cite{SoVe}*{Thm.\ 3.25}. It is also easy to see that $\nulld_\tau$ encodes causality if all null distances are realized by piecewise causal curves and $\tau$ is locally anti-Lipschitz \cite{AlBu}*{Rem.\ 3.22}. It remained an open problem to understand causality encoding in the general case. Independently to our approach, Sakovich and Sormani~\cite{SaSo} very recently obtained some results that are comparable to Theorem~\ref{thm:causalityglobal} and Theorem~\ref{thm:causalitylocal}. We briefly discuss their setting and how it compares to ours.
 
The global causality encoding result \cite{SaSo}*{Thm.\ 4.1} of Sakovich and Sormani is formulated for spacetimes with \emph{proper} locally anti-Lipschitz time functions, requiring that all time slabs $\tau^{-1}([\tau_1,\tau_2])$ with $[\tau_1,\tau_2] \subseteq \tau(M)$ are compact. The following argument shows that the level sets of a proper time function $\tau$ are (compact) \emph{Cauchy} hypersurfaces and thus the spacetimes that Sakovich and Sormani consider are, in particular, \emph{globally hyperbolic}: Suppose, for the sake of contradiction, that $\gamma$ is an inextendible causal curve on $M$ that does not intersect some $\tau$-level set. Without loss of generality, suppose that $[0,1] \in \tau(M)$ and that $\gamma$ intersects $\{\tau=0\}$ but not $\{\tau=1\}$. Then the piece of $\gamma$ lying in the compact set $\tau^{-1}([0,1])$ is future inextendible, contradicting the fact that any spacetime with a time function is non-totally imprisoning. Hence $\gamma$ must intersect every level set of $\tau$.
 
Therefore, our global Theorem~\ref{thm:causalityglobal} is applicable to a wider class of spacetimes (also those having noncompact Cauchy surfaces) while the result \cite{SaSo}*{Thm.\ 4.1} of Sakovich and Sormani is applicable to a wider class of time functions (locally anti-Lipschitz instead of temporal).
 
In Section~\ref{ssec:causalitycount} we show that the assumption of Cauchy level sets can, in general, not be relaxed (see Remark~\ref{rem:fcauchy} for a mild trivial extension). An example of Sakovich and Sormani \cite{SaSo}*{Ex.\ 2.2} that was constructed to show that noncompact level sets are problematic, in fact also already fails on a much more fundamental level because the spacetime is not globally hyperbolic.

Both proofs, that of Theorem~\ref{thm:causalityglobal} and that of Sakovich and Sormani~\cite{SaSo}*{Thm.~4.1}, rely on constructing a limit of a $\nulld_\tau$-minimizing sequence of piecewise causal curves, and showing that the limit is a (continuous) causal curve. Both proofs do this via the Arzel\`{a}--Ascoli Theorem, but while Sakovich and Sormani apply it using the null distance, we employ a complete Riemannian metric $g_R$ (directly related to $g$ via Wick-rotation, hence requiring $\tau$ temporal). This added regularity in our proof allows us to indeed obtain a locally Lipschitz (with respect to any Riemannian metric) limit curve $\beta$ and compute $g(\dot\beta,\dot\beta)$ explicitly. Sakovich and Sormani can work with locally anti-Lipschitz time functions by using special coordinate systems and do not rely on the regularity of $\beta$ to show that points are causally related. The important step to prove that the limit curve indeed reaches the desired endpoint is achieved by Sakovich and Sormani by the properness of $\tau$ (which implies that the whole sequence lies in a compact set) while we employ Cauchyness of the level sets (which can be noncompact, placing less restrictions on the spacetime, as discussed above).

In Theorem~\ref{thm:admcausalityglobal} we combine both approaches to obtain causality encodement for all locally anti-Lipschitz time functions with (future/past) Cauchy level sets (neither required to be proper nor temporal). See Example \ref{ex:Milne} for a physically relevant case where our result applies.

\begin{proof}[Proof of Theorem~\ref{thm:admcausalityglobal}]
 Suppose the level sets of $\tau$ are past Cauchy. We use the notation of Theorem~\ref{thm:causalityglobal} and sequence $(\beta_n)_n$ satisfying \eqref{eq:globminimizing}. Note that Step 1 in the proof of Theorem~\ref{thm:causalityglobal} does not require any specific property of $\tau$ either, so \eqref{eq:locminimizing} also holds. We can carry out Step 2 with respect to any complete Riemannian metric $h$ on $M$ (instead of $g_R$). Thus by the Arzel\`{a}--Ascoli Theorem we obtain a locally Lipschitz limit curve $\beta \colon [0,\infty) \to M$ from $p$.
 
 Since $d_h$ and $\nulld_\tau$ both induce the manifold topology, the uniform convergence with respect to $d_h$ on compact subintervals implies pointwise convergence $\beta_n(t)\to\beta(t)$ with respect to the null distance $\nulld_\tau$ for all $t \in [0,\infty)$ as $n\to\infty$. Since the induced length structure of $\nulld_\tau$, i.e.,
 \[ L_{\nulld_\tau}(\gamma) = \sup \left\{ \sum_{i=1}^k \nulld_\tau (\gamma(s_i),\gamma(s_{i-1})) \, | \,  a=s_0 < s_1 < \ldots < s_k =b \right\} \]
 for rectifiable paths $\gamma \colon [a,b] \to M$, is lower semicontinuous \cite{BBI}*{Prop.\ 2.3.4} and agrees with $\nullL_\tau$ on the class of piecewise causal curves \cite{AlBu}*{Prop.\ 3.8} we obtain
 \begin{align*}
  L_{\nulld_\tau}(\beta|_{[s,t]}) &\leq \lim_{n\to\infty} L_{\nulld_\tau}(\beta_n|_{[s,t]}) = \lim_{n\to\infty} \nullL_\tau(\beta_n|_{[s,t]}).
 \end{align*}
 Together with property \eqref{eq:locminimizing} and the continuity of $\tau$ we thus have that
 \[
  L_{\nulld_\tau}(\beta|_{[s,t]}) \leq \tau(\beta(t)) - \tau(\beta(s)) \leq \nulld_\tau(\beta(s),\beta(t)) \leq L_{\nulld_\tau}(\beta|_{[s,t]}).
 \]
 Hence $\beta$ is not only a $\nulld_\tau$-minimizing curve but also satisfies for all $s,t \in [0,\infty)$
 \[
  L_{\nulld_\tau}(\beta|_{[s,t]}) = \tau(\beta(t)) - \tau(\beta(s)).
 \]
 As in the proof of Sakovich and Sormani~\cite{SaSo}*{Thm.\ 4.3} their local causality encoding property thus implies that $\beta$ is future-directed causal as continuous curve (in the sense of \cite{HaEl}*{p.\ 184}) starting at $p$. This completes Step 3.
 
 It remains to be shown that $\beta$ reaches $q$ (in fact, at this point it could still be constant $p$). We proceed in a similar fashion as in Step 4 of the proof of Theorem~\ref{thm:causalityglobal} (also taking into account Remark~\ref{rem:fcauchy}) and distinguish the two cases (i) $\tau(\beta(s_0)) = \tau(q)$ for some $s_0 \in[0,\infty)$ and (ii) $\tau(\beta(s)) < \tau(q)$ for all $s$:

 (i) extends verbatim (replacing $d_R$ by $d_h$) and implies that $\beta(s_0)=q$.
 
 (ii) requires us to prove that $b_n$ is uniformly bounded from above. Note that, again with respect to the arbitrary complete Riemannian metric $h$ chosen for the convergence in Step 2, by \cite{Bur}*{Thm.\ 4.11},
 \begin{align*}
  b_n &= L_h(\beta_n|_{[0,b_n]}) \\ &= \sup \left\{ \sum_{i=1}^k d_h(\beta_n(s_i),\beta_n(s_{i-1})) \, | \, 0=s_0 < s_1 < \ldots < s_k = b_n \right\}.
 \end{align*}
 Since $\tau$ is locally anti-Lipschitz, by Step 1 in the proof of Lemma~\ref{lem:localbiLip}, for every point in $M$ there exists a neighborhood $U$ and a constant $C>0$ such that
 \begin{equation} \label{eq:antiLipproof}
 d_h(x,y) \leq C \nulld_\tau(x,y)
 \end{equation}
 for all $x,y \in U$. By the (reverse) argument in the proof of Theorem~\ref{thm:nonsmooth} on page \pageref{proof:nonsmooth}, we can even assume that $C$ is such that \eqref{eq:antiLipproof} holds on the entire compact set $\overline{W}$. Since all $\beta_n(s_i) \in \overline{W}$ by construction, and again by \cite{AlBu}*{Prop.\ 3.8}, we have
 \begin{align*}
  b_n &\leq \sup \left\{ \sum_{i=1}^k C \nulld_\tau(\beta_n(s_i),\beta_n(s_{i-1})) \, | \, 0=s_0 < s_1 < \ldots < s_k = b_n \right\} \\
  &\leq C L_{\nulld_\tau}(\beta_n|_{[0,b_n]}) = C \nullL_\tau(\beta_n|_{[0,b_n]}) \leq C (\tau(q)-\tau(p)+1).
 \end{align*}
 Proceeding again as in the proof of Theorem~\ref{thm:causalityglobal} yields a contradiction.
 
 Therefore, $\beta$ is a locally Lipschitz future-directed causal curve from $p$ to $q$, and $ q \in J^+(p)$. 
\end{proof}

Theorem~\ref{thm:admcausalityglobal} (for future Cauchy level sets) allows us to immediately prove \emph{global} causality encodement for a large and physically relevant class of time functions for which \emph{local} causality encodement was already shown by Sakovich and Sormani~\cite{SaSo}*{Cor.\ 1.2}.

\begin{proof}[Proof of Corollary~\ref{cor:cosmo}]
 By Lemma~\ref{temporalareLaL} the regular cosmological time function $\tau \colon M \to (0,\infty)$ is weak temporal. By \cite{AGH}*{Prop.\ 2.6} the level sets of $\tau$ are future Cauchy, hence the result follows from Theorem~\ref{thm:admcausalityglobal}.
\end{proof}

We conclude with a basic cosmological example for which Theorem~\ref{thm:admcausalityglobal} is directly applicable.

\begin{ex}[Milne model]\label{ex:Milne}
 Recall that the $n+1$-dimensional Milne model $(M,g)$ is a globally hyperbolic spacetime which can be viewed as the chronological future $I^+(0)$ of the origin in the Minkowski spacetime $\real^{1,n}$ (see, for instance, \cite{Li}). The cosmological time function $\tau \colon M \to (0,\infty)$ is the Lorentzian distance $d_g$ from the origin, i.e.,
 \[
  \tau(p) := \sup_{q\in J^-(p)} d_g(q,p).
 \]
 By Lemma~\ref{temporalareLaL} $\tau$ is a weak temporal function and the level sets of $\tau$ are the noncompact hyperboloids (which are Cauchy). Thus by our Theorem~\ref{thm:admcausalityglobal} we see that $\nulld_{\tau}$ encodes causality globally. Due to noncompactness the result~\cite{SaSo}*{Thm.\ 4.1} is not applicable. Since, however, the Milne model can be viewed as a warped product when expressed in the right coordinates \cite{Li}*{Eq.\ 1.1}, causality encodement already follows from an earlier result of Sormani and Vega~\cite{SoVe}*{Thm.\ 3.25}.
\end{ex}

\begin{rem}[Local causality-encoding results]
One can extend the local causality-encoding Theorem~\ref{thm:causalitylocal} to locally anti-Lipschitz time functions along the same lines. Note that in this local result a neighborhood $U$ of $x$ is constructed on which \emph{any} two points $p, q \in U$ can be compared as in \eqref{eq:causenc}, while in the local result \cite{SaSo}*{Thm.\ 1.1} of Sakovich and Sormani the point $p = x$ is fixed and only $q$ can be chosen freely.
\end{rem}


\subsection{Counterexamples}
\label{ssec:causalitycount}

We conclude this section with a series of examples that show that the local Theorem~\ref{thm:causalitylocal} with respect to temporal functions cannot be promoted to a global statement in the spirit of Theorem~\ref{thm:causalityglobal}, even if the spacetime is globally hyperbolic, but the $\tau$-level sets are not Cauchy (Example~\ref{ex:counterexgh}).

\medskip
In order to better contextualize our examples, we also consider the $\K$ relation. Recall that $\K$ is defined as the (unique) smallest closed and transitive relation containing $\J$. By Lemma~\ref{lem:Rprop} we therefore know that \[ \J \subseteq \K \subseteq \nullR_\tau.\]
The definition of $\K$ is due to Sorkin and Woolgar~\cite{SoWo}.  Furthermore, a spacetime $(M,g)$ is called \emph{$\K$-causal} if the $\K$ relation is antisymmetric, a condition later shown to be equivalent to stable causality by Minguzzi~\cite{Min}, and hence also equivalent to the existence of time function \cites{Haw,Min2}.

\begin{ex}[$J^+ \subsetneq K^+ = \nullR_\tau$]
Allen and Burtscher constructed examples \cite{AlBu}*{Ex.\ 3.23, 3.24} by removing points and lines from Minkowski space for which causality is not encoded, i.e., $\J \neq \nullR_\tau$. Notably, $K$-causality is still encoded, meaning that $\K = \nullR_\tau$.
\end{ex}

Sormani and Vega gave another example \cite{SoVe}*{Prop.\ 3.4} for which one can check that $\J \subsetneq \K \subsetneq \nullR_\tau$. Their example is Minkowski space with the time function $\tau = t^3$, which is is not locally anti-Lipschitz, and the null distance is not definite. We modify said example to obtain a definite null distance for which neither $\J$ nor $\K$ are encoded.

\begin{ex}[$\J \subsetneq \K \subsetneq \nullR_\tau$] \label{counterexK}
Consider $M := \real^{1,1} \setminus \{(0,x) \mid x \geq 0 \}$ equipped with the usual Minkowski metric $g := -dt^2 + dx^2$. We define the function $\tau \colon M \to \real$ by
\begin{equation*}
    \tau(t,x) := \begin{cases} t^3 &\text{if } x > 0, \\ t^3 + t x^2 &\text{if } x \leq 0. \end{cases}
\end{equation*}

First observe that $\tau$ is a temporal function on $(M,g)$ because its gradient vector is timelike: For $x > 0$, this is trivial (note that then $t \neq 0$, by definition of $M$). For $x \leq 0$, since $(0,0) \not\in M$, it follows from
\begin{align*}
    g(\nabla \tau,\nabla\tau) &= -\left( \partial_t \tau \right)^2 + \left( \partial_x \tau \right)^2 \\
    &= -\left( 3t^2+x^2 \right)^2 + \left( 2tx \right)^2 \\
    &= -9t^4 -x^4 - 6 t^2 x^2 + 4 t^2 x^2 < 0.
\end{align*}

Next we show that every pair of points $p = (t_p,x_p)$, $q=(t_q,x_q)$ with $x_p,x_q > 0$ and $t_q < 0 < t_p$ satisfies
\begin{equation} \label{dppbar}
   \nulld_\tau(p,q) = \tau(p) - \tau(q),
\end{equation}
despite the fact that clearly not all such $p$ and $q$ are related by $\J$ or $\K$: Let $k \in \nat$ be large enough so that $t_p > x_p/k$ and $\vert t_q \vert > x_q/k$. Consider the points (see Figure \ref{fig:counterexK})
\begin{equation*}
    q_1 := (x_p/k,x_p), \quad q_2 := (x_p/k,0), \quad q_3 := (0,-\min\{x_p,x_q\}/k).
\end{equation*}

\begin{figure}
    \centering
    \begin{tikzpicture}
    \begin{axis}[xmin=-1,xmax=4,
                 ymin=-1.5,ymax=2,
                 axis equal image,
                 axis x line=middle,
                 axis y line=middle,
                 x axis line style={draw=none, insert path={(axis cs:-1,0) edge[-] (axis cs:0,0) (axis cs:0,0) edge[ dashed] (axis cs:4,0)}},
                 xlabel={$x$},
                 ylabel={$t$},
                 xtick=\empty,
                 ytick=\empty,
                 clip=false]
        \filldraw[black] (axis cs: 3,1.5) circle (2pt) node[anchor = west] {$p$};
        \filldraw[black] (axis cs: 3,0.25) circle (2pt) node[anchor = west] {$q_1$};
        \filldraw[black] (axis cs: 0,0.3) circle (2pt) node[anchor = south east] {$q_2$};
        \filldraw[black] (axis cs: -0.25,0) circle (2pt) node[anchor = south east] {$q_3$};
        \filldraw[black] (axis cs: 2.5,-1) circle (2pt) node[anchor = west] {$q$};
        \filldraw[draw=black,fill=white] (axis cs: 0,0) circle (2pt);
        \draw (axis cs: 3,1.5) -- (axis cs: 3,0.3);
        \draw foreach \k in {1, ..., 5} {(axis cs: {3.6-0.6*\k},0.3) -- (axis cs: {3.3-0.6*\k},0.6) -- (axis cs: {3-0.6*\k},0.3)};
        \draw (axis cs: 0,0.3) -- (axis cs: -0.25,0) -- (axis cs: 0,-0.25);
        \draw foreach \k in {1, ..., 5} {(axis cs: {3-0.5*\k},-0.25) -- (axis cs: {2.75-0.5*\k},-0.5) -- (axis cs: {2.5-0.5*\k},-0.25)};
        \draw (axis cs: 2.5,-1) -- (axis cs: 2.5,-0.25);
    \end{axis}
    \end{tikzpicture}
    \caption{A piecewise causal curve that approximates the distance between the points $p$ and $q$ in Example \ref{counterexK}, giving the upper bound \eqref{estimatedpr}.}
    \label{fig:counterexK}
\end{figure}
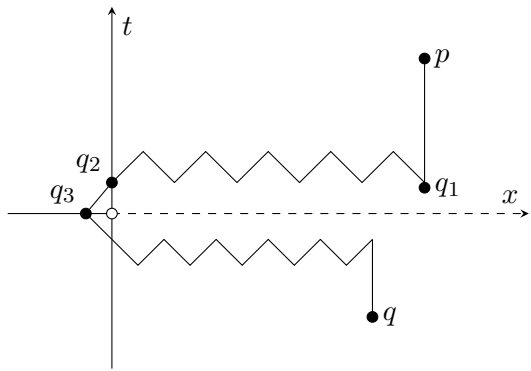

\noindent Since $p \in J^+(q_1)$ and $q_2 \in J^+(q_3)$ trivially
\begin{align} \label{dq0q1}
    &\nulld_\tau(p,q_1) = t_p^3 - \left(\frac{x_p}{k} \right)^3,  &\nulld_\tau(q_2,q_3) = \left( \frac{x_p}{k} \right)^3.
\end{align}
Moreover, taking a piecewise null curve from $q_1$ to $q_2$ that consists of $2k$ segments between $t=x_p/k$ and $t=2x_p/k$ (see Figure \ref{fig:counterexK}), we get that
\begin{equation} \label{dq1q2}
    \nulld_\tau(q_1,q_2) \leq 2k \left( \left( \frac{2 x_p}{k}  \right)^3 - \left(  \frac{x_p}{k} \right)^3 \right) = 14 \frac{x_p^3}{k^2}.
\end{equation}
Combining \eqref{dq0q1} and \eqref{dq1q2} with the triangle inequality proves
\begin{equation*}
  \nulld_\tau(p,q_3) \leq t_p^3 + 14 \frac{x_p^3}{k^2},
\end{equation*}
and by symmetry we obtain the analogous estimate for $\nulld_\tau(q,q_3)$, thus
\begin{equation} \label{estimatedpr}
    \nulld_\tau(p,q) \leq \nulld_\tau(p,q_3) + \nulld_\tau(q_3,q) \leq  t_p^3 + 14 \frac{x_p^3 + x_q^3}{k^2} + \left\vert t_q \right\vert^3,
\end{equation}
and taking the limit $k \to \infty$ implies \eqref{dppbar}, as desired.
\end{ex}

In the previous example, we have constructed a minimizing sequence of piecewise causal curves by choosing curves that are close to a ``barrier" (the removed positive $x$ axis). This barrier, however, also makes the spacetime causally discontinuous. Since the time function $\tau$ is perfectly regular (it is $C^1$ with timelike gradient, and can easily be smoothed out), one might suspect that causal discontinuity is the reason that causality is not encoded by $\nulld_\tau$ in Example~\ref{counterexK}. This motivates the next example, where we construct a causally simple spacetime with a temporal function $\tau$ but causality is still not encoded in $\nulld_\tau$. Recall that causal simplicity means that the causal relation $J^+$ is antisymmetric and closed, and sits only one step below global hyperbolicity on the causal ladder. In order to achieve this effect, instead of approaching a barrier, we construct an example with a minimizing sequence of piecewise null curves that runs off to infinity.

\begin{ex}[$\J = \K \subsetneq \nullR_\tau$] \label{counterexsimple}
    Let $M:= \real^{3}$ with coordinates $(t,x,y)$ and warped product metric tensor
    \begin{equation} \label{metricJK}
        g := \cosh^2 (x) \left( -dt^2 + dy^2 \right) + dx^2.
    \end{equation}
    A change of coordinates $z:=\arctan\left(\sinh(x)\right)$ reveals that $g$ is conformal to the Minkowski metric $-dt^2+dy^2+dz^2$, where $z \in (- \pi/2, \pi/2)$. Hence $(M,g)$ is causally simple with $\J = \K$.

    Furthermore, note that all $\{x=x_0\}$ planes are conformal to Minkowski space $\real^{1,1}$, while each $\{ y = y_0 \}$ plane is isometric to the universal cover of $AdS^2$. Therefore, the induced null geodesics $s \mapsto (t_\pm(s),x_\pm(s))$ in the $\{ y = y_0\}$ plane going through a point $(t_0,0)$ are given by
    \begin{align}
        t_\pm(s) &= 2 \arctan \left( \tanh \left( \frac{s}{2} \right) \right) + t_0,  \label{nullgeo} \\
        x_\pm(s) &= \pm s,  \nonumber
    \end{align}
    the subscript $+$ or $-$ indicating the right- or left-going geodesics respectively \cite{Klaas}*{Sec.\ 5.10}. Moreover, the function
    \begin{equation*}
        \tau(t,x,y) := \cosh^{-1} (x) t + t^3,
    \end{equation*}
    is a steep temporal function on $(M,g)$ since
    \begin{align*}
        g(\nabla\tau,\nabla\tau) &= -\cosh^2(x) \left( \partial_t \tau \right)^2 + \left( \partial_x \tau \right)^2 \\
    &= -\cosh^2(x) \left( \cosh^{-1}(x) + 3t^2 \right)^2 + \left( -\cosh^{-1}(x) \tanh(x) t \right)^2 \\
    &= -1 - \cosh^2(x) 9t^4 - \left( 6 \cosh(x) - \cosh^{-2}(x) \tanh^2 (x) \right) t^2 \leq -1,
    \end{align*}
    where in the last line, we have used that $|\tanh(x)| < 1 \leq \cosh(x)$ for all $x \in \real$.
    
    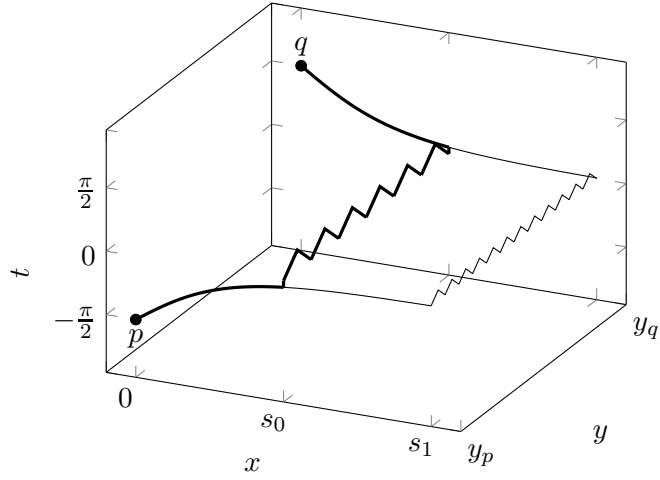
\begin{figure}
\centering
\begin{tikzpicture}
\begin{axis}
    [xlabel = {$x$},
    ylabel = {$y$},
    zlabel = {$t$},
    xmin=-0.5,xmax=5.5,
    ymin=0,ymax=6,
    zmin=-3,zmax=3,
    xtick={0,2.5,5},
    xticklabels={$0$,$s_0$,$s_1$},
    ytick={0,6},
    yticklabels={$y_p$,$y_q$},
    ztick={-1.5708,0,1.5708},
    zticklabels={$-\frac{\pi}{2}$,$0$,$\frac{\pi}{2}$},
    ]
    
\addplot3[
    domain=2.5:5,
    samples = 50,
    samples y=0,
    smooth,
]
({x},
{0},
{2*rad(atan(tanh(x*0.5)))-0.5*pi});

\addplot3[
    domain=2.5:5,
    samples = 50,
    samples y=0,
    smooth,
]
({x},
{6},
{2*rad(atan(tanh(-x*0.5)))+0.5*pi});

\draw[
	  ] (axis cs: 5,0,{2*rad(atan(tanh(2.5)))-0.5*pi}) -- (axis cs: 5,0,0);
\draw[
		    ] (axis cs: 5,6,{2*rad(atan(tanh(-2.5)))+0.5*pi}) -- (axis cs: 5,6,0);

\draw[
		    ] foreach \k in {1, ..., 12} {(axis cs: 5,0.5*\k-0.5,0) -- (axis cs: 5,0.5*\k-0.25,0.25) -- (axis cs: 5,0.5*\k,0)};
    
\addplot3[
    domain=0:2.5,
    samples = 50,
    samples y=0,
    smooth,
    very thick
]
({x},
{0},
{2*rad(atan(tanh(x*0.5)))-0.5*pi});

\addplot3[
    domain=0:2.5,
    samples = 50,
    samples y=0,
    smooth,
    very thick
]
({x},
{6},
{2*rad(atan(tanh(-x*0.5)))+0.5*pi});

\filldraw[black] (axis cs: 0,0,-0.5*pi) circle (2pt) node[anchor=north] {$p$};
\filldraw[black] (axis cs: 0,6,0.5*pi) circle (2pt) node[anchor=south] {$q$};

\draw[very thick] (axis cs: 2.5,0,{2*rad(atan(tanh(1.25)))-0.5*pi}) -- (axis cs: 2.5,0,0);
\draw[very thick] (axis cs: 2.5,6,{2*rad(atan(tanh(-1.25)))+0.5*pi}) -- (axis cs: 2.5,6,0);

\draw[very thick] foreach \k in {1, ..., 6} {(axis cs: 2.5,\k-1,0) -- (axis cs: 2.5,\k-0.5,0.5) -- (axis cs: 2.5,\k,0)};
\end{axis}
\end{tikzpicture}
\caption{Two piecewise causal curves going trough the points $p_{s_0}$ and $q_{s_0}$ (thick) and $p_{s_1}$ and $q_{s_1}$ (thin), respectively, that approximate the null distance between $p$ and $q$ in Example \ref{counterexsimple}. The thin curve yields a better approximation.}
\label{fig:counterexsimple}
\end{figure}

    Consider now two points $p,q \in M$ of the form
    \begin{align*}
        &p := \left(-\frac{\pi}{2},\ 0,\ y_p\right), &q := \left(\frac{\pi}{2},\ 0,\ y_q \right),
    \end{align*}
    where $\vert y_p - y_q \vert > \pi$. We are going to show that $(p,q) \in \nullR_\tau$, i.e., that
    \begin{equation} \label{dpq2}
        \nulld_\tau (p,q) = \tau(q) - \tau(p),
    \end{equation}
    despite the fact that $(p,q) \not\in \J$. To see that $(p,q) \not\in \J$, note that by definition of $g$ in \eqref{metricJK} the $\{x = 0\}$ plane is isometric to $1+1$-Minkowski space, and that the projection $(t,x,y)\mapsto (t,y)$ of any causal curve from $p$ to $q$ in $M$ to $\{x = 0\}$ remains causal in $\{ x = 0 \}$. The condition $\vert y_p - y_q \vert > \pi$ then implies that $(p,q) \not\in \J_{\{x = 0\}}$, and therefore also $(p,q) \not\in \J$.
    
    In order to show \eqref{dpq2}, we proceed in three steps. First, define
    \begin{align*}
        &p_s = (0,s,y_p), &q_s=(0,s,y_q),
    \end{align*}
    where $s \in \real$ is arbitrary. By \eqref{nullgeo}, a null geodesic in the $\{y=y_p\}$ plane starting at $p$ eventually reaches the point $(t_+(s),s)$ where $t_+(s) < 0$. It follows that $(p,p_s) \in \J$, and by the time reversed argument on the $\{y=y_q\}$ plane, that $(q_s,q) \in \J$, altogether implying that
    \begin{align} \label{dpps}
        &\nulld_\tau(p,p_s) = \tau(p_s) - \tau(p) = - \tau(p), &\nulld_\tau(q_s,q) = \tau(q) - \tau(q_s) = \tau(q).
    \end{align}
    The second step is to estimate the null distance between $p_s$ and $q_s$. Given that the $\{x = s\}$ plane is conformal to Minkowski space and the null distance is conformally invariant it is easy to construct piecewise causal curves between $p_s$ and $q_s$. The null distance induced on each plane is different though, and since $\tau|_{\{x=x_s\}} \to t^3$ as $s\to\infty$ we have at the ``boundary" of our spacetime an indefinite null distance that cannot distinguish any points in the $\{t=0\}$ slice \cite{SoVe}*{Prop.\ 3.4}. We make this intuitive picture precise by constructing in each $\{x=x_s\}$ plane a piecewise null curve $\beta_{s,k}$ that bounces $k$ times between $t=0$ and $t=\vert y_p - y_q \vert/k$ (see Figure \ref{fig:counterexsimple}). Using the curves $\beta_{s,k}$ to estimate the null distance, we obtain the upper bound
    \begin{align}
        \nulld_\tau(p_s,q_s) &\leq \liminf_{k\to\infty}\nullL_\tau(\beta_{s,k}) \nonumber \\ &= \liminf_{k\to\infty} 2k \left(\frac{\left\vert y_p - y_q \right\vert^3}{k^3} + \cosh^{-1}(s)\frac{\left\vert y_p - y_q \right\vert}{k} \right) \nonumber \\
        &= 2\cosh^{-1}(s)\left\vert y_p - y_q \right\vert.
        \label{nullLsigma}
    \end{align}
    Finally, the triangle inequality together with \eqref{dpps} and \eqref{nullLsigma} yields
    \begin{align*}
        \nulld_\tau(p,q) &\leq \lim_{s \to \infty} \left( \nulld_\tau(p,p_s) + \nulld_\tau(p_s,q_s) + \nulld_\tau(q_s,q) \right)
        = \tau(q) - \tau(p).
    \end{align*}
    This finishes the proof of \eqref{dpq2}, since the opposite inequality is always true.
    
Note that in our proof, $|y_p - y_q|$ can be chosen arbitrarily large while $\nulld_\tau(p,q) = \pi+ \frac{\pi^3}{4}$ remains the same. Therefore the $\nulld_\tau$-ball at $p$ of radius $R > \pi + \frac{\pi^3}{4}$ is unbounded with respect to the usual Euclidean distance.
\end{ex}

Given that in the causal ladder of spacetimes, causal simplicity comes just before global hyperbolicity, the previous Example~\ref{counterexsimple} shows that the assumptions in Theorem~\ref{thm:causalityglobal} are sharp in view of the causal structure required. The following and final example shows that even on a globally hyperbolic spacetime, Cauchyness of the time function cannot simply be dropped in Theorem~\ref{thm:admcausalityglobal}. This is not surprising, because non-Cauchy temporal functions on globally hyperbolic spacetimes can have a much wilder behavior than their Cauchy counterparts, such as topology changes of the level sets \cite{San2}.

\begin{ex}[$J^+ = K^+ \subsetneq \nullR_\tau$ for non-Cauchy locally anti-Lipschitz time function in globally hyperbolic spacetime] \label{ex:counterexgh}
We show that the (future/past) Cauchy assumption in Theorem~\ref{thm:admcausalityglobal} cannot be relaxed. To this end we construct an example that combines aspects of Examples~\ref{counterexK} and \ref{counterexsimple} in the sense that a $\nulld_\tau$-minimizing sequence of piecewise causal curves between certain points approaches both a barrier (in the $x$ direction) \emph{and} runs off to infinity (in the $y$ direction).
 
 The spacetime under consideration is
 \begin{align*}
 M := \, & \{(t,x,y) \mid t > 0, x > t-1\} \\
 &\cup \{(t,x,y) \mid t < 0, x < t+1\} \\ 
 &\cup \{ (0,x,y) \mid -1 < x < 1 \}  \subseteq \real^{1,2},
 \end{align*}
 considered as subset of the $(2+1)$-dimensional Minkowski space with metric $g := -dt^2 + dx^2 + dy^2$ (see Figure~\ref{fig:counterexgh}). Clearly, $M$ is globally hyperbolic. We equip $M$ with the continuous function
 \[
 \tau(t,x,y) := t^3 + \Psi(t,x)\cosh^{-1}\left(\frac{y}{2}\right),
 \]
 where
 \[
 \Psi(t,x) := \begin{cases} \sqrt{(t+1)^2-x^2} &\text{if } \vert t+1 \vert > \vert x \vert, \\ 0 & \text{otherwise.} \end{cases}
 \]
 We show that (i) $\tau$ is a time function for $(M,g)$ and (ii) that the corresponding null distance $\nulld_\tau$ does not encode causality globally. Theorem~\ref{thm:admcausalityglobal} thus implies that $\tau$ is not (future/past) Cauchy, as can also be seen by considering causal curves in Minkowski spacetime which leave the region $M$.

 \begin{figure}
\includegraphics{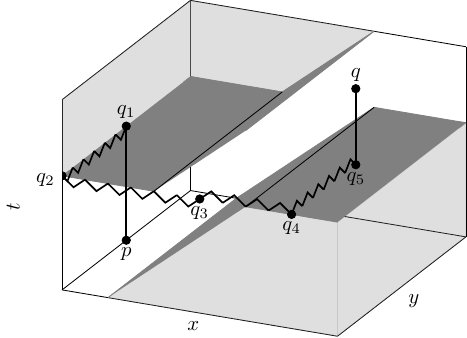}
\caption{A piecewise causal curve that approximates the null distance between $p$ and $q$ in Example~\ref{ex:counterexgh}. Only the space in between the two grey surfaces is part of the spacetime. The curve comes $\varepsilon$-close to the boundary, but remains entirely within $M$.}
\label{fig:counterexgh}
\end{figure}

(i) We show that the gradient vector field $\nabla \tau$ is timelike almost everywhere, which is a sufficient condition for being a time function. In the region where $\Psi=0$, our function is simply $t^3$, and since said region does not include $\{t=0\}$, we have that $\nabla \tau$ is timelike there. It remains to consider the region where $\Psi \neq 0$. There the gradient of $\tau$ is given by
 \begin{align*}
     \nabla \tau = &-\left(3t^2 + \frac{t+1}{\sqrt{(t+1)^2-x^2}} \cosh^{-1}\left(\frac{y}{2}\right)\right) \partial_t \\[1em] &-\frac{x}{\sqrt{(t+1)^2-x^2}} \cosh^{-1}\left(\frac{y}{2}\right) \partial_x \\[1em] &-\frac{1}{2}\sqrt{(t+1)^2-x^2} \cosh^{-1}\left(\frac{y}{2}\right) \tanh\left(\frac{y}{2}\right) \partial_y,
 \end{align*}
 and therefore its norm is
 \begin{align*}
     g(\nabla \tau, \nabla \tau) = &-9 t^4 - 3 t^2 \frac{t+1}{\sqrt{(t+1)^2-x^2}} \cosh^{-1}\left(\frac{y}{2}\right)  \\[1em] &- \cosh^{-2}\left(\frac{y}{2}\right)+ \frac{1}{4}\Big( (t+1)^2-x^2\Big) \cosh^{-2}\left(\frac{y}{2}\right) \tanh^{2}\left(\frac{y}{2}\right).
 \end{align*}
 On the RHS, the terms on the first line are always negative (since $t+1>0$ in the region we are considering). If $t\leq 1$, then the second line is also negative, since then $(t+1)^2-x^2 \leq 4$ and $\vert \tanh(z) \vert <1$. If, on the other hand, $t \geq 1$, then the $9 t^4$ term dominates the whole expression (since also $\cosh(z)>1$). In either case, we have shown that $g(\nabla \tau, \nabla \tau)<0$, as desired.
 
(ii) It remains to be shown that causality is not encoded in the null distance. Concretely, we show that for all $p=(t_p,x_p,y_p)$ and $q=(t_q,x_q,y_q)$ with $t_p < -2 < 2 < t_q$ and $x_p < t_p + 1$, $x_q > t_q + 1$, 
 \[ \nulld_\tau(p,q) = \tau(q) - \tau(p),\]
 despite the fact that clearly not every two such points are causally related. The argument is depicted in Figure~\ref{fig:counterexgh}, and we omit some computations that are analogous to the ones in the previous examples. Choose $\varepsilon>0$ and follow a causal segment from $p$ to $q_1 = (-\varepsilon,x_p,y_p)$, so that
 \[
 \nulld_\tau(p,q_1) = \tau(p) - \varepsilon^3.
 \]
 Note that $q_1$ lies in a region where $\tau = t^3$. Therefore, given any (arbitrarily large) $R>0$, for $q_2 = (-\varepsilon,x_p,R)$ we have \[\nulld_\tau(q_1,q_2) \to 0 \text{ as }\varepsilon \to 0,\] similarly to the situation in Example~\ref{counterexK}. Next, let $q_3 = (0,0,R)$. Then \[ \nulld_\tau(q_2,q_3) \sim \varepsilon \text{ for $R$ such that } \cosh^{-1}(R) \leq \varepsilon^2, \]
 similar to what happens in Example~\ref{counterexsimple}. Finally, do a similar procedure backwards to get from $q_3$ to $q_4 = (\varepsilon,x_q,R)$ to $q_5 = (\varepsilon,x_q,y_q)$ (with arbitrarily small length) and then to $q$ (with length $\tau(q)-\varepsilon^3$).
 
 In conclusion, by choosing $R(\varepsilon)$ such that $\cosh^{-1}(R(\varepsilon)) \leq \varepsilon^2$ (as required above), we have that
 \[\lim_{\varepsilon \to 0} \sum_{i=1}^4 \nulld_\tau(q_i,q_{i+1}) = 0,\]
 and by the triangle inequality
 \[\nulld_\tau(p,q) \leq \lim_{\varepsilon \to 0} \left[ \nulld_\tau(p,q_1) + \sum_{i=1}^4 \nulld_\tau(q_i,q_{i+1}) + \nulld_\tau(q_5,q) \right] = \tau(q) - \tau(p),\]
 as claimed (the opposite inequality always holds).
\end{ex}

We end this section noting that the temporal function in Example~\ref{counterexsimple} is \emph{steep}, a notion already discussed at the beginning of Section~\ref{ssec:wick}. Since any temporal function is steep for a conformal transformation of $g$ (which leaves the null distance invariant)\, steepness is unrelated to causality encodement. The situation is different for completely uniform temporal functions (also called $h$-steep), because they are special Cauchy temporal functions. We define and make use of them in the following section.


\section{Completeness}\label{sec:complete}

In this final section we prove our Main Theorem~\ref{mainthm} which characterizes global hyperbolicity of $(M,g)$ by metric completeness of $(M,\nulld_\tau)$. Completeness is a global property, therefore  we cannot expect $(M,\nulld_\tau)$ to be complete for all choices of $\tau$ (even though they are locally equivalent by Section~\ref{sec:equiv}). In Section~\ref{sec:causality} we have observed that locally anti-Lipschitz Cauchy functions encode causality globally. Therefore, it comes at no surprise that this is a necessary ingredient for a completeness result. We use and generalize the following class of time functions recently introduced by Bernard and Suhr~\cites{BeSu,BeSu2} to study closed cone fields.

\begin{defn} \label{defn:hsteep}
 Let $(M,g)$ be a spacetime. A smooth function $\tau \colon M \to \real$ is called a \emph{completely uniform temporal function} if there exists a complete Riemannian metric $h$ on $M$ such that for all causal vectors $v \in TM$
 \begin{align}\label{hsteep}
  d\tau(v) \geq \|v\|_h.
 \end{align}
 We call $\tau$ a \emph{completely uniform weak temporal function} if it is weak temporal and \eqref{hsteep} holds almost everywhere.
\end{defn}

Originally these functions were called steep with respect to a (complete) Riemannian metric in \cite{BeSu}*{p.\ 473} and later renamed in \cite{BeSu2}*{Def.\ 1.2}. Subsequently, $f$-steep functions with respect to any positive homogeneous $C^1$ function (not just $f = \|.\|_h$ for $h$ a complete Riemannian metric) were also used by Minguzzi~\cite{Min3}*{p.\ 2} in the analysis of Lorentz--Finsler spaces.

It was shown by Bernard and Suhr~\cite{BeSu}*{Thm.\ 3} and later also by Minguzzi~\cite{Min3}*{Thm.\ 3.1} that the existence of completely uniform temporal function is equivalent to global hyperbolicity of the spacetime.

These results are key in the following refined version of Theorem~\ref{mainthm}.

\begin{thm}\label{thm1}
 Let $(M,g)$ be a spacetime.
 \begin{enumerate}
  \item If $\tau$ is a time function such that $(M,\nulld_\tau)$ is a complete metric space, then $\tau$ is a Cauchy time function. In particular, $(M,g)$ is globally hyperbolic.
  \item If $(M,g)$ is globally hyperbolic then there exists a completely uniform weak temporal function $\tau$, and for every such $\tau$, $(M,\nulld_\tau)$ is a complete metric space.
 \end{enumerate}
\end{thm}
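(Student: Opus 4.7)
My plan is to handle the two implications of Theorem~\ref{thm1} separately. For (i), I assume metric completeness of $(M,\nulld_\tau)$ and argue by contradiction that $\tau$ must be Cauchy, so that global hyperbolicity follows from Theorem~\ref{thm:Cauchytime}. For (ii), I invoke the existence of $h$-steep Cauchy temporal functions on globally hyperbolic spacetimes (Bernard--Suhr~\cite{BeSu} and Minguzzi~\cite{Min3}) and show that $h$-steepness with complete $h$ forces metric completeness of $(M,\nulld_\tau)$.

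For (i), let $\gamma:(a,b)\to M$ be a future-inextendible causal curve, and suppose for contradiction that $T := \sup_{s} \tau(\gamma(s)) < \infty$. For any sequence $s_n \to b$, setting $p_n := \gamma(s_n)$, the restriction $\gamma|_{[s_n,s_m]}$ is itself a future-directed causal curve, so $\nulld_\tau(p_n,p_m) \leq |\tau(p_m) - \tau(p_n)|$, and thus $(p_n)$ is Cauchy in $(M,\nulld_\tau)$. Completeness yields a limit $p$, and interleaving two different choices of $(s_n)$ shows that $\gamma(s) \to p$ as $s \to b$; since $\nulld_\tau$ being a metric forces $\tau$ to be locally anti-Lipschitz and hence induces the manifold topology by \cite[Thm.\ 4.6]{SoVe}, the point $p$ is an honest future endpoint of $\gamma$, contradicting inextendibility. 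A symmetric argument in the past direction shows $\tau\circ\gamma$ is unbounded below; combined with monotonicity and continuity, $\tau\circ\gamma$ surjects onto $\real$, so $\tau$ is Cauchy and Theorem~\ref{thm:Cauchytime} delivers global hyperbolicity.

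For (ii), let $\tau$ be an $h$-steep Cauchy temporal function for some complete Riemannian metric $h$, as provided by \cite{BeSu,Min3}. The $h$-steep condition $d\tau(v) \geq \sqrt{h(v,v)}$ for every future-directed causal vector $v$ immediately yields $\nullL_\tau(\beta) \geq L_h(\beta)$ for every piecewise causal curve $\beta$, and thus the global bound $d_h(p,q) \leq \nulld_\tau(p,q)$. Any Cauchy sequence in $(M,\nulld_\tau)$ is therefore Cauchy in $(M,d_h)$, and completeness of $h$ produces a manifold limit $p_n \to p$. Applying Theorem~\ref{thm:nonsmooth} on a compact neighborhood of $p$ then gives the reverse bound $\nulld_\tau(p_n,p) \leq C\, d_h(p_n,p) \to 0$, so $(M,\nulld_\tau)$ is complete.

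The main obstacle is the topological subtlety in (i): converting Cauchy convergence in $\nulld_\tau$ into a genuine manifold limit point of $\gamma$, which is what drives the contradiction with inextendibility. This hinges on $\nulld_\tau$ inducing the manifold topology, which in turn requires $\tau$ to be locally anti-Lipschitz; justifying that the definiteness built into the hypothesis ``$(M,\nulld_\tau)$ is a complete metric space'' already supplies this regularity (or otherwise assuming anti-Lipschitzness at the outset) is the delicate step. Once this is in place, everything else is a clean synthesis of the bi-Lipschitz estimates from Section~\ref{sec:equiv} with the existence theorem for $h$-steep Cauchy temporal functions.
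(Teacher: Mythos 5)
Your proof is correct and follows essentially the same strategy as the paper in both parts: for (i) you run a contradiction argument by producing a $\nulld_\tau$-Cauchy sequence along a $\tau$-bounded future-inextendible causal curve, and for (ii) you invoke the existence of an $h$-steep Cauchy temporal function and derive completeness from the global lower bound $d_h \leq \nulld_\tau$. The only differences are cosmetic: you fill in the topological step in (i) (the paper glides over it, implicitly relying on $\nulld_\tau$ inducing the manifold topology) and in (ii) you reprove the completeness implication directly via Theorem~\ref{thm:nonsmooth} instead of citing Allen--Burtscher's \cite[Thm.~1.6]{AlBu}.
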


Theorem~\ref{mainthm} is a direct corollary of Theorem~\ref{thm1}.

\begin{proof}
 (i) Assume that $(M,\nulld_\tau)$ is complete but $\tau$ is not a Cauchy time function. Then there exists, without loss of generality, a future-directed future-inextendible causal curve $\gamma \colon \real \to M$ such that $\lim_{s \to \infty} \tau(\gamma(s)) < \infty$. Consider the sequence $(p_n)_n$ of points given by $p_n = \gamma(n)$. Since the $p_n$ are causally related among each other $\nulld_\tau(p_n, p_m) = \vert \tau(p_n) - \tau(p_m) \vert$. Then the fact that $\tau\circ\gamma \colon \real \to \real$ is strictly increasing and bounded from above implies that $(p_n)_n$ is a Cauchy sequence in $(M,\nulld_\tau)$. By completeness there exists a limit point $p$, and since $\gamma$ is continuous, $p \in \overline{\gamma}$, a contradiction to the inextendibility of $\gamma$. Hence $\tau$ must be a Cauchy time function, and $(M,g)$ globally hyperbolic by Theorem~\ref{thm:Cauchytime}.
 
 \medskip
 (ii) By \cites{BeSu,Min3} $(M,g)$ is globally hyperbolic if and only if there is a completely uniform temporal function $\tau$ which with respect to a complete Riemannian metric $h$ satisfies \eqref{hsteep}. We show that any such (even only weak temporal) $\tau$ is anti-Lipschitz with respect to the (complete) distance $d_h$ induced by $h$, i.e., there is a $C>0$ such that for all $p,q \in M$
 \begin{align}\label{antilip}
  (p,q) \in J^+ \implies \tau(q) - \tau(p) \geq C d_h(p,q).
 \end{align}
 Pick any $q \in J^+(p)$ and $\gamma \colon [0,1] \to M$ a causal curve with $\gamma(0)=p$, $\gamma(1)=q$. Then by \eqref{hsteep}
 \begin{align*}
  \tau(q) - \tau(p) &= \int_0^1 d\tau\left( \dot{\gamma}(s) \right) ds \\
  &\geq \int_0^1 \sqrt{h\left(\dot{\gamma}(s),\dot{\gamma}(s)\right)} ds = L_h(\gamma) \geq d_h (p,q).
 \end{align*}
 Thus \eqref{antilip} holds globally, and a theorem of Allen and Burtscher \cite{AlBu}*{Thm.\ 1.6} implies that $(M,\nulld_\tau)$ is complete (and definite).
\end{proof}

\begin{rem}
Recall that $(M,\nulld_\tau)$ is always a locally compact length-metric space \cite{AlBu}*{Thm.\ 1.1}. If $(M,\nulld_\tau)$ is also complete, the Hopf--Rinow--Cohn-Vossen Theorem implies that any pair of points can be joined by a $\nulld_\tau$-length minimizing curve. Beware that the minimizer is, in general, only $\nulld_\tau$-rectifiable, but not necessarily piecewise causal \cite{AlBu}*{Ex.\ 3.17}. If $\tau$ is besides Cauchy also locally anti-Lipschitz, thanks to Theorem~\ref{thm:admcausalityglobal}, we still do know that the null distance between two points is their difference in time precisely when there is a causal curve between them.
\end{rem}

Applying Theorem~\ref{thm1}(ii) and then (i) proves the following result, originally shown by Bernard and Suhr \cite{BeSu}*{Thm.~3} for temporal functions (see also \cite{BeSu2}*{Lemma~1.3}).

\begin{cor}
 If a weak temporal function $\tau$ is completely uniform, then $\tau$ is Cauchy. \qed
\end{cor}

Since the cosmological time function does not attain negative values it is not Cauchy, and hence by Theorem~\ref{thm1}(i) the corresponding null distance is not complete. We conclude our paper with a counterexample that shows that non-completely uniform Cauchy temporal functions on globally hyperbolic spacetimes do, in general, also not imply metric completeness.

\begin{ex}[Cauchy temporal function with incomplete null distance] \label{ex:notcomplete}
In \cite{San}*{Sec.\ 6.4}, S\'anchez constructs a globally hyperbolic spacetime $(M,g)=(\real^2,-dt^2+f^2(t,x)dx^2)$ with a certain piecewise defined $L^1$-function $f\colon M \to (0,\infty)$ and such that $t$ is a Cauchy temporal function, but the spacelike slice $\{t=0\}$ is geodesically complete as Riemannian manifold $(\real, f^2(0,x)dx^2)$. Let $(0,x)$, $(0,y)$ be two points on the $\{t=0\}$ slice. Then we can estimate their null distance by a sequence of piecewise null curves $\gamma_n(s) = (\gamma_n^t(s),s)$ satisfying $0 \leq \gamma_n^t(s) \leq \frac{1}{n}$. We obtain
\begin{equation*}
    \nulld_t ((0,x),(0,y)) \leq \nullL_t(\gamma_n) = \int^y_x \vert \dot\gamma_n^t(s) \vert ds = \int^y_x f(\gamma_n^t(s),s) ds.
\end{equation*}
Applying dominated convergence to the right hand side yields
\begin{align*}
 \lim_{n \to \infty} \int^y_x f(\gamma_n^t(s),s) ds = \int^y_x f(0,s) ds \leq \|f(0,\cdot)\|_{L^1(\real)} < \infty.
\end{align*}
This implies that the sequence $(n)_n$ is Cauchy because for any $\varepsilon>0$, assuming that $m\leq n$ sufficiently large,
\[
 \nulld_t((0,m),(0, n)) \leq \int_m^n f(0,s) ds \leq \int_m^\infty f(0,s) ds < \varepsilon.
\]
The hypothetical limit point at $\infty$, however, is not in $M$. Therefore, $(M,\nulld_t)$ is incomplete.
\end{ex}

\section*{Statements and declarations}

Data sharing not applicable to this article as no datasets were generated or analysed during the current study.

This version of the article has been accepted for publication, after peer review, but is not the Version of Record and does not reflect post-acceptance improvements, or any corrections. The Version of Record is available online at: \texttt{http://dx.doi.org/10.1007/s00220-024-04936-5}.

\bibliographystyle{abbrv}
\bibliography{nulldistleo}

@article{Bur,
	author = {Burtscher, Annegret Y.},
	journal = {New York J. Math.},
	pages = {273--296},
	title = {Length structures on manifolds with continuous {R}iemannian metrics},
	volume = {21},
	year = {2015}
}

@book{HaEl,
	author = {Hawking, S. W. and Ellis, G. F. R.},
	pages = {xi+391},
	publisher = {Cambridge University Press, London-New York},
	title = {The large scale structure of space-time},
	year = {1973}
}

@article{Min1,
	author = {Minguzzi, Ettore},
	journal = {Comm. Math. Phys.},
	number = {3},
	pages = {855--868},
	title = {Time functions as utilities},
	volume = {298},
	year = {2010}
}

@Article{SoWo,
  author  = {Sorkin, R. D. and Woolgar, E.},
  journal = {Classical Quantum Gravity},
  title   = {A causal order for spacetimes with {$C^0$} {L}orentzian metrics: proof of compactness of the space of causal curves},
  year    = {1996},
  number  = {7},
  pages   = {1971--1993},
  volume  = {13},
}

@article{SoVe,
	author = {Sormani, C. and Vega, C.},
	date = {2016},
	journal = {Classical Quantum Gravity},
	number = {8},
	pages = {085001},
	title = {Null distance on a spacetime},
	volume = {33}
}

@article{Min2,
	author = {Minguzzi, Ettore},
	doi = {10.1142/S0129055X19300012},
	fjournal = {Reviews in Mathematical Physics. A Journal for Both Review and Original Research Papers in the Field of Mathematical Physics},
	issn = {0129-055X},
	journal = {Rev. Math. Phys.},
	mrclass = {53C50 (34A60 53C22 83C75 93D30)},
	mrnumber = {3955368},
	mrreviewer = {Clemens Saemann},
	number = {5},
	pages = {1930001, 139},
	title = {Causality theory for closed cone structures with applications},
	url = {https://doi.org/10.1142/S0129055X19300012},
	volume = {31},
	year = {2019}
}

@article{Min3,
	author = {Minguzzi, Ettore},
	doi = {10.1088/1742-6596/968/1/012009},
	fjournal = {Journal of Physics. Conference Series},
	issn = {1742-6588},
	journal = {J. Phys. Conf. Ser.},
	mrclass = {53C50 (53C60)},
	mrnumber = {3919952},
	mrreviewer = {Erasmo Caponio},
	pages = {012009},
	title = {The representation of spacetime through steep time functions},
	url = {https://doi.org/10.1088/1742-6596/968/1/012009},
	volume = {968},
	year = {2018}
}

@Article{AlBu,
  author   = {Allen, Brian and Burtscher, Annegret},
  journal  = {Int. Math. Res. Not. IMRN},
  title    = {Properties of the null distance and spacetime convergence},
  year     = {2022},
  issn     = {1073-7928},
  number   = {10},
  pages    = {7729--7808},
  doi      = {10.1093/imrn/rnaa311},
  fjournal = {International Mathematics Research Notices. IMRN},
  mrclass  = {53C50},
  mrnumber = {4418719},
  url      = {https://doi.org/10.1093/imrn/rnaa311},
}

@article{BeSa,
	author = {Bernal, Antonio N. and S{\'a}nchez, Miguel},
	doi = {10.1007/s00220-005-1346-1},
	fjournal = {Communications in Mathematical Physics},
	issn = {0010-3616},
	journal = {Comm. Math. Phys.},
	mrclass = {53C50 (83C20)},
	mrnumber = {2163568},
	mrreviewer = {Paul E. Ehrlich},
	number = {1},
	pages = {43--50},
	title = {Smoothness of time functions and the metric splitting of globally hyperbolic spacetimes},
	url = {https://doi.org/10.1007/s00220-005-1346-1},
	volume = {257},
	year = {2005}
}

@article{BeSa2,
    author = "Bernal, Antonio N. and S{\'a}nchez, Miguel",
    title = "{Globally hyperbolic spacetimes can be defined as 'causal' instead of 'strongly causal'}",
    eprint = "gr-qc/0611138",
    archivePrefix = "arXiv",
    doi = "10.1088/0264-9381/24/3/N01",
    journal = "Class. Quant. Grav.",
    volume = "24",
    pages = "745--750",
    year = "2007"
}

@article{NoOz,
	author = {Nomizu, Katsumi and Ozeki, Hideki},
	doi = {10.2307/2034383},
	fjournal = {Proceedings of the American Mathematical Society},
	issn = {0002-9939},
	journal = {Proc. Amer. Math. Soc.},
	mrclass = {53.72},
	mrnumber = {133785},
	mrreviewer = {Shoshichi Kobayashi},
	pages = {889--891},
	title = {The existence of complete {R}iemannian metrics},
	url = {https://doi.org/10.2307/2034383},
	volume = {12},
	year = {1961}
}

@article{BeSu,
	author = {Bernard, Patrick and Suhr, Stefan},
	doi = {10.1007/s00220-018-3127-7},
	fjournal = {Communications in Mathematical Physics},
	issn = {0010-3616},
	journal = {Comm. Math. Phys.},
	mrclass = {37D20},
	mrnumber = {3783554},
	mrreviewer = {Michael Hurley},
	number = {2},
	pages = {467--498},
	title = {Lyapounov functions of closed cone fields: from {C}onley theory to time functions},
	url = {https://doi.org/10.1007/s00220-018-3127-7},
	volume = {359},
	year = {2018}
}

@article{MuSa,
	author = {M{\"u}ller, O. and S{\'a}nchez, M.},
	doi = {10.1090/S0002-9947-2011-05299-2},
	fjournal = {Transactions of the American Mathematical Society},
	issn = {0002-9947},
	journal = {Trans. Amer. Math. Soc.},
	mrclass = {53C50 (53C42)},
	mrnumber = {2813419},
	mrreviewer = {Ettore Minguzzi},
	number = {10},
	pages = {5367--5379},
	title = {Lorentzian manifolds isometrically embeddable in {$\mathbb{L}^{N}$}},
	url = {https://doi.org/10.1090/S0002-9947-2011-05299-2},
	volume = {363},
	year = {2011}
}

@article{HoMi,
	author = {Hounnonkpe, R. A. and Minguzzi, Ettore},
	doi = {10.1088/1361-6382/ab3f11},
	fjournal = {Classical and Quantum Gravity},
	issn = {0264-9381},
	journal = {Classical Quantum Gravity},
	mrclass = {53C50},
	mrnumber = {4016706},
	mrreviewer = {Jun-ichi Mukuno},
	number = {19},
	pages = {197001, 9},
	title = {Globally hyperbolic spacetimes can be defined without the `causal' condition},
	url = {https://doi.org/10.1088/1361-6382/ab3f11},
	volume = {36},
	year = {2019}
}

@Article{KuSt,
  author     = {Kunzinger, Michael and Steinbauer, Roland},
  journal    = {Ann. Henri Poincar\'{e}},
  title      = {Null distance and convergence of {L}orentzian length spaces},
  year       = {2022},
  issn       = {1424-0637,1424-0661},
  number     = {12},
  pages      = {4319--4342},
  volume     = {23},
  doi        = {10.1007/s00023-022-01198-6},
  fjournal   = {Annales Henri Poincar\'{e}. A Journal of Theoretical and Mathematical Physics},
  mrclass    = {53C23 (53B30 53C50 53C80)},
  mrnumber   = {4512238},
  mrreviewer = {Kotik\ K.\ Lee},
  url        = {https://doi.org/10.1007/s00023-022-01198-6},
}

@article{BuGH,
  author        = {Burtscher, Annegret and Garc\'{\i}a-Heveling, Leonardo},
  note          = {arXiv:2108.02693 [gr-qc]},
  title         = {Time functions on {L}orentzian length spaces},
  year          = {2021},
  archiveprefix = {arXiv},
  eprint        = {2108.02693},
  primaryclass  = {gr-qc},
}

@article{Min4,
	author = {Minguzzi, Ettore},
	doi = {10.1007/s41114-019-0019-x},
	journal = {Living Rev. Rel.},
	number = {1},
	pages = {3},
	title = {Lorentzian causality theory},
	volume = {22},
	year = {2019}
}

@article{Min,
	author = {Minguzzi, Ettore},
	doi = {10.1007/s00220-009-0794-4},
	fjournal = {Communications in Mathematical Physics},
	issn = {0010-3616},
	journal = {Comm. Math. Phys.},
	mrclass = {53C50 (53C80 83C75)},
	mrnumber = {2520513},
	mrreviewer = {Robert J. Low},
	number = {1},
	pages = {239--248},
	title = {{$K$}-causality coincides with stable causality},
	url = {https://doi.org/10.1007/s00220-009-0794-4},
	volume = {290},
	year = {2009}
}

@article{Haw,
	author = {Hawking, Stephen},
	journal = {Proc. Roy. Soc. Lond. A},
	pages = {433--435},
	title = {The existence of cosmic time functions},
	volume = {A308},
	year = {1968}
}

@book{Klaas,
 address = {Nijmegen},
 author = {Landsman, Klaas},
 doi = {10.54195/EFVF4478},
 isbn = {9789083178929, 9789083178929},
 keyword = {Mathematical physics, Penrose, Gravity, Einstein, Black holes, Mathematical Relativity},
 month = {Oct},
 pages = {394},
 publisher = {Radboud University Press},
 subtitle = {From {E}instein to {B}lack {H}oles},
 title = {Foundations of {G}eneral {R}elativity},
 year = {2021}
}

@article {Ger,
    AUTHOR = {Geroch, Robert},
     TITLE = {Domain of dependence},
   JOURNAL = {J. Mathematical Phys.},
  FJOURNAL = {Journal of Mathematical Physics},
    VOLUME = {11},
      YEAR = {1970},
     PAGES = {437--449},
      ISSN = {0022-2488},
   MRCLASS = {83.35},
  MRNUMBER = {270697},
MRREVIEWER = {W. Israel},
       DOI = {10.1063/1.1665157},
       URL = {https://doi.org/10.1063/1.1665157},
}

@Book{Chr,
  author    = {Chru{\'s}ciel, Piotr T.},
  publisher = {Oxford: Oxford University Press},
  title     = {Geometry of black holes},
  year      = {2020},
  isbn      = {978-0-19-885541-5},
  series    = {Int. Ser. Monogr. Phys.},
  volume    = {169},
  doi       = {10.1093/oso/9780198855415.001.0001},
  fseries   = {International Series of Monographs on Physics},
  keywords  = {83-02, 83C57},
  language  = {English},
  zbl       = {1475.83002},
  zbmath    = {7242984},
}

@Article{AGH,
  author   = {Andersson, Lars and Galloway, Gregory J. and Howard, Ralph},
  journal  = {Classical Quantum Gravity},
  title    = {The cosmological time function},
  year     = {1998},
  issn     = {0264-9381},
  number   = {2},
  pages    = {309--322},
  volume   = {15},
  doi      = {10.1088/0264-9381/15/2/006},
  fjournal = {Classical and Quantum Gravity},
  keywords = {53C50, 83C05, 83C75, 83F05},
  language = {English},
  zbl      = {0911.53039},
  zbmath   = {1221026},
}

@Article{SaSo,
  author   = {Sakovich, A. and Sormani, C.},
  journal  = {J. Math. Phys.},
  title    = {The null distance encodes causality},
  year     = {2023},
  issn     = {0022-2488,1089-7658},
  number   = {1},
  pages    = {Paper No. 012502, 18},
  volume   = {64},
  doi      = {10.1063/5.0118979},
  fjournal = {Journal of Mathematical Physics},
  mrclass  = {53C50 (83C75)},
  mrnumber = {4538267},
  url      = {https://doi.org/10.1063/5.0118979},
}

@Book{Mun,
  author    = {Munkres, James R.},
  publisher = {Prentice Hall, Inc., Upper Saddle River, NJ},
  title     = {Topology},
  year      = {2000},
  isbn      = {0-13-181629-2},
  pages     = {xvi+537},
}

@Article{San,
  author   = {S{\'{a}}nchez, Miguel},
  journal  = {Gen. Relativity Gravitation},
  title    = {Globally hyperbolic spacetimes: slicings, boundaries and counterexamples},
  year     = {2022},
  issn     = {0001-7701},
  number   = {10},
  pages    = {Paper No. 124, 52},
  volume   = {54},
  doi      = {10.1007/s10714-022-03002-6},
  fjournal = {General Relativity and Gravitation},
  url      = {https://doi.org/10.1007/s10714-022-03002-6},
}

@Article{San2,
  author   = {S\'{a}nchez, Miguel},
  journal  = {Port. Math.},
  title    = {A class of cosmological models with spatially constant sign-changing curvature},
  year     = {2023},
  issn     = {0032-5155,1662-2758},
  number   = {3-4},
  pages    = {291--313},
  volume   = {80},
  doi      = {10.4171/pm/2099},
  fjournal = {Portugaliae Mathematica. A Journal of the Portuguese Mathematical Society},
  url      = {https://doi.org/10.4171/pm/2099},
}

@Book{BEE,
  author     = {Beem, John K. and Ehrlich, Paul E. and Easley, Kevin L.},
  publisher  = {Marcel Dekker, Inc., New York},
  title      = {Global {L}orentzian geometry},
  year       = {1996},
  edition    = {Second},
  isbn       = {0-8247-9324-2},
  series     = {Monographs and Textbooks in Pure and Applied Mathematics},
  volume     = {202},
  mrclass    = {53C50 (53-02 83-02)},
  mrnumber   = {1384756},
  mrreviewer = {Peter R. Law},
  pages      = {xiv+635},
}

@Book{Ler,
  author    = {Jean Leray},
  publisher = {Institute of Advanced Study},
  title     = {Hyperbolic differential equations},
  year      = {1953, reprinted 1955},
  url       = {https://hdl.handle.net/20.500.12111/8019},
}

@Article{CB,
  author   = {Four\`es-Bruhat, Y.},
  journal  = {Acta Math.},
  title    = {Th\'{e}or\`eme d'existence pour certains syst\`emes d'\'{e}quations aux d\'{e}riv\'{e}es partielles non lin\'{e}aires},
  year     = {1952},
  issn     = {0001-5962},
  pages    = {141--225},
  volume   = {88},
  doi      = {10.1007/BF02392131},
  fjournal = {Acta Mathematica},
  url      = {https://doi.org/10.1007/BF02392131},
}

@Article{Ave,
  author   = {Avez, Andr\'{e}},
  journal  = {Ann. Inst. Fourier (Grenoble)},
  title    = {Essais de g\'{e}om\'{e}trie riemannienne hyperbolique globale. {A}pplications \`a la relativit\'{e} g\'{e}n\'{e}rale},
  year     = {1963},
  issn     = {0373-0956},
  number   = {fasc. 2},
  pages    = {105--190},
  volume   = {13},
  fjournal = {Universit\'{e} de Grenoble. Annales de l'Institut Fourier},
  url      = {http://www.numdam.org/item?id=AIF_1963__13_2_105_0},
}

@Article{Sei,
  author   = {Seifert, Hans-J\"{u}rgen},
  journal  = {Z. Naturforsch},
  title    = {Global connectivity by timelike geodesics},
  year     = {1967},
  pages    = {1356--1360},
  volume   = {22a},
  fjournal = {Zeitschrift f\"{u}r Naturforschung},
}

@Article{Pen,
  author   = {Penrose, Roger},
  journal  = {Phys. Rev. Lett.},
  title    = {Gravitational collapse and space-time singularities},
  year     = {1965},
  issn     = {0031-9007},
  pages    = {57--59},
  volume   = {14},
  doi      = {10.1103/PhysRevLett.14.57},
  fjournal = {Physical Review Letters},
  url      = {https://doi.org/10.1103/PhysRevLett.14.57},
}

@Article{Bus,
  author   = {Busemann, H.},
  journal  = {Dissertationes Math. (Rozprawy Mat.)},
  title    = {Timelike spaces},
  year     = {1967},
  issn     = {0012-3862},
  pages    = {52},
  volume   = {53},
  fjournal = {Polska Akademia Nauk. Instytut Matematyczny. Dissertationes Mathematicae. Rozprawy Matematyczne},
}

@Article{Beem,
  author   = {Beem, John K.},
  journal  = {Gen. Relativity Gravitation},
  title    = {Globally hyperbolic space-times which are timelike {C}auchy complete},
  year     = {1976},
  issn     = {0001-7701},
  number   = {4},
  pages    = {339--344},
  volume   = {7},
  doi      = {10.1007/bf00771104},
  fjournal = {General Relativity and Gravitation},
  url      = {https://doi.org/10.1007/bf00771104},
}

@Article{ChYa,
  author   = {Cheng, Shiu Yuen and Yau, Shing Tung},
  journal  = {Ann. of Math. (2)},
  title    = {Maximal space-like hypersurfaces in the {L}orentz-{M}inkowski spaces},
  year     = {1976},
  issn     = {0003-486X},
  number   = {3},
  pages    = {407--419},
  volume   = {104},
  doi      = {10.2307/1970963},
  fjournal = {Annals of Mathematics. Second Series},
  url      = {https://doi.org/10.2307/1970963},
}

@Article{BeEr,
  author   = {Beem, John K. and Ehrlich, Paul E.},
  journal  = {Geom. Dedicata},
  title    = {Geodesic completeness of submanifolds in {M}inkowski space},
  year     = {1985},
  issn     = {0046-5755},
  number   = {2},
  pages    = {213--226},
  volume   = {18},
  doi      = {10.1007/BF00151400},
  fjournal = {Geometriae Dedicata},
  url      = {https://doi.org/10.1007/BF00151400},
}

@Article{Har3,
  author   = {Harris, Steven G.},
  journal  = {Classical Quantum Gravity},
  title    = {Closed and complete spacelike hypersurfaces in {M}inkowski space},
  year     = {1988},
  issn     = {0264-9381},
  number   = {1},
  pages    = {111--119},
  volume   = {5},
  fjournal = {Classical and Quantum Gravity},
  url      = {http://stacks.iop.org/0264-9381/5/111},
}

@Article{BDGSS,
  author   = {Borde, A. and Dowker, H. F. and Garcia, R. S. and Sorkin, R. D. and Surya, S.},
  journal  = {Classical Quantum Gravity},
  title    = {Causal continuity in degenerate spacetimes},
  year     = {1999},
  issn     = {0264-9381},
  number   = {11},
  pages    = {3457--3481},
  volume   = {16},
  doi      = {10.1088/0264-9381/16/11/303},
  fjournal = {Classical and Quantum Gravity},
  url      = {https://doi.org/10.1088/0264-9381/16/11/303},
}

@Book{AGS,
  author    = {Ambrosio, Luigi and Gigli, Nicola and Savar\'{e}, Giuseppe},
  publisher = {Birkh\"{a}user Verlag, Basel},
  title     = {Gradient flows in metric spaces and in the space of probability measures},
  year      = {2008},
  edition   = {Second},
  isbn      = {978-3-7643-8721-1},
  series    = {Lectures in Mathematics ETH Z\"{u}rich},
  pages     = {x+334},
}

@Book{Lee,
  author    = {Lee, Jeffrey M.},
  publisher = {American Mathematical Society, Providence, RI},
  title     = {Manifolds and differential geometry},
  year      = {2009},
  isbn      = {978-0-8218-4815-9},
  series    = {Graduate Studies in Mathematics},
  volume    = {107},
  doi       = {10.1090/gsm/107},
  pages     = {xiv+671},
  url       = {https://doi.org/10.1090/gsm/107},
}

@article {AmKi,
    AUTHOR = {Ambrosio, Luigi and Kirchheim, Bernd},
     TITLE = {Currents in metric spaces},
   JOURNAL = {Acta Math.},
  FJOURNAL = {Acta Mathematica},
    VOLUME = {185},
      YEAR = {2000},
    NUMBER = {1},
     PAGES = {1--80},
      ISSN = {0001-5962},
       DOI = {10.1007/BF02392711},
       URL = {https://doi.org/10.1007/BF02392711},
}

@article {FeFl,
    AUTHOR = {Federer, Herbert and Fleming, Wendell H.},
     TITLE = {Normal and integral currents},
   JOURNAL = {Ann. of Math. (2)},
  FJOURNAL = {Annals of Mathematics. Second Series},
    VOLUME = {72},
      YEAR = {1960},
     PAGES = {458--520},
      ISSN = {0003-486X},
       DOI = {10.2307/1970227},
       URL = {https://doi.org/10.2307/1970227},
}

@article {La,
    AUTHOR = {Lang, Urs},
     TITLE = {Local currents in metric spaces},
   JOURNAL = {J. Geom. Anal.},
  FJOURNAL = {Journal of Geometric Analysis},
    VOLUME = {21},
      YEAR = {2011},
    NUMBER = {3},
     PAGES = {683--742},
      ISSN = {1050-6926},
       DOI = {10.1007/s12220-010-9164-x},
       URL = {https://doi.org/10.1007/s12220-010-9164-x},
}

@article {LaWe,
    AUTHOR = {Lang, Urs and Wenger, Stefan},
     TITLE = {The pointed flat compactness theorem for locally integral
              currents},
   JOURNAL = {Comm. Anal. Geom.},
  FJOURNAL = {Communications in Analysis and Geometry},
    VOLUME = {19},
      YEAR = {2011},
    NUMBER = {1},
     PAGES = {159--189},
      ISSN = {1019-8385},
       DOI = {10.4310/CAG.2011.v19.n1.a5},
       URL = {https://doi.org/10.4310/CAG.2011.v19.n1.a5},
}

@article {SoWe,
    AUTHOR = {Sormani, Christina and Wenger, Stefan},
     TITLE = {The intrinsic flat distance between {R}iemannian manifolds and
              other integral current spaces},
   JOURNAL = {J. Differential Geom.},
  FJOURNAL = {Journal of Differential Geometry},
    VOLUME = {87},
      YEAR = {2011},
    NUMBER = {1},
     PAGES = {117--199},
      ISSN = {0022-040X},
       URL = {http://projecteuclid.org/euclid.jdg/1303219774},
}

@article {JaLe,
    AUTHOR = {Jauregui, Jeffrey L. and Lee, Dan A.},
     TITLE = {Lower semicontinuity of {ADM} mass under intrinsic flat
              convergence},
   JOURNAL = {Calc. Var. Partial Differential Equations},
  FJOURNAL = {Calculus of Variations and Partial Differential Equations},
    VOLUME = {60},
      YEAR = {2021},
    NUMBER = {5},
     PAGES = {Paper No. 193, 42},
      ISSN = {0944-2669},
       DOI = {10.1007/s00526-021-02048-9},
       URL = {https://doi.org/10.1007/s00526-021-02048-9},
}

@article {Erk,
    AUTHOR = {Erkeko\~{g}lu, Fazilet},
     TITLE = {A survey on sufficient conditions for geodesic completeness of
              nondegenerate hypersurfaces in {L}orentzian geometry},
   JOURNAL = {Int. J. Geom. Methods Mod. Phys.},
  FJOURNAL = {International Journal of Geometric Methods in Modern Physics},
    VOLUME = {13},
      YEAR = {2016},
    NUMBER = {3},
     PAGES = {1630003, 9},
      ISSN = {0219-8878},
   MRCLASS = {53C40 (53C50)},
  MRNUMBER = {3466966},
MRREVIEWER = {Hiroo Naitoh},
       DOI = {10.1142/S0219887816300038},
       URL = {https://doi.org/10.1142/S0219887816300038},
}

@Article{WaYi,
  author   = {Wald, Robert M. and Yip, Ping},
  journal  = {J. Math. Phys.},
  title    = {On the existence of simultaneous synchronous coordinates in spacetimes with spacelike singularities},
  year     = {1981},
  issn     = {0022-2488},
  number   = {11},
  pages    = {2659--2665},
  volume   = {22},
  doi      = {10.1063/1.524844},
  fjournal = {Journal of Mathematical Physics},
  url      = {https://doi.org/10.1063/1.524844},
}

@Article{Ve,
  author    = {Vega, Carlos},
  title     = {Spacetime distances: an exploration},
  year      = {2021},
  note      = {arXiv:2103.01191 [gr-qc]},
  doi       = {10.48550/ARXIV.2103.01191},
  eprint    = {arXiv:2103.01191 [gr-qc]},
  publisher = {arXiv},
  url       = {https://arxiv.org/abs/2103.01191},
}

@Article{BeSu2,
  author   = {Bernard, Patrick and Suhr, Stefan},
  journal  = {Proc. Amer. Math. Soc.},
  title    = {Cauchy and uniform temporal functions of globally hyperbolic cone fields},
  year     = {2020},
  issn     = {0002-9939},
  number   = {11},
  pages    = {4951--4966},
  volume   = {148},
  doi      = {10.1090/proc/15106},
  fjournal = {Proceedings of the American Mathematical Society},
  url      = {https://doi.org/10.1090/proc/15106},
}

@Article{CBGe,
  author   = {Choquet-Bruhat, Yvonne and Geroch, Robert},
  journal  = {Comm. Math. Phys.},
  title    = {Global aspects of the {C}auchy problem in general relativity},
  year     = {1969},
  issn     = {0010-3616},
  pages    = {329--335},
  volume   = {14},
  fjournal = {Communications in Mathematical Physics},
  url      = {http://projecteuclid.org/euclid.cmp/1103841822},
}

@Book{BBI,
  author    = {Burago, Dmitri and Burago, Yuri and Ivanov, Sergei},
  publisher = {American Mathematical Society, Providence, RI},
  title     = {A course in metric geometry},
  year      = {2001},
  isbn      = {0-8218-2129-6},
  series    = {Graduate Studies in Mathematics},
  volume    = {33},
  doi       = {10.1090/gsm/033},
  pages     = {xiv+415},
  url       = {https://doi.org/10.1090/gsm/033},
}

@Article{Li,
  author   = {Ling, Eric},
  journal  = {Found. Phys.},
  title    = {The big bang is a coordinate singularity for {$k=-1$} inflationary {FLRW} spacetimes},
  year     = {2020},
  issn     = {0015-9018},
  number   = {5},
  pages    = {385--428},
  volume   = {50},
  doi      = {10.1007/s10701-020-00335-7},
  fjournal = {Foundations of Physics. An International Journal Devoted to the Conceptual Bases and Fundamental Theories of Modern Physics},
  url      = {https://doi.org/10.1007/s10701-020-00335-7},
}

@Article{CGM,
  author     = {Chru\'{s}ciel, Piotr T. and Grant, James D. E. and Minguzzi, Ettore},
  journal    = {Ann. Henri Poincar\'{e}},
  title      = {On differentiability of volume time functions},
  year       = {2016},
  issn       = {1424-0637,1424-0661},
  number     = {10},
  pages      = {2801--2824},
  volume     = {17},
  doi        = {10.1007/s00023-015-0448-3},
  fjournal   = {Annales Henri Poincar\'{e}. A Journal of Theoretical and Mathematical Physics},
  mrclass    = {58C35},
  mrnumber   = {3546988},
  mrreviewer = {Juan\ Manuel\ P\'{e}rez-Pardo},
  url        = {https://doi.org/10.1007/s00023-015-0448-3},
}

\end{document}